\DeclareMathAlphabet{\mathscr}{T1}{calligra}{m}{n}
\theoremstyle{plain}
\newtheorem{theorem}{Theorem}[section]
\newtheorem{corollary}[theorem]{Corollary}
\newtheorem{lemma}[theorem]{Lemma}
\newtheorem{proposition}[theorem]{Proposition}
\theoremstyle{definition}
\newtheorem{definition}[theorem]{Definition}
\newtheorem{remark}[theorem]{Remark}
\newtheorem*{notation}{Notation}
\DeclareMathOperator{\image}{im}
\DeclareMathOperator{\R}{{\mathbb{R}}}
\DeclareMathOperator{\C}{{\mathbb{C}}}
\newcommand{\id}{\mathrm{id}}
\newcommand{\Ad}{\mathrm{Ad}}
\newcommand{\ad}{\mathrm{ad}}
\newcommand{\g}{{\mathfrak{g}}}
\newcommand{\ttt}{{\mathfrak{t}}}
\DeclareMathOperator{\spn}{span}
\DeclareMathOperator{\im}{Im}
\title{Partial coherent state transforms, $G\times T$-invariant K\"ahler structures and geometric quantization of cotangent bundles of compact Lie groups}
\author{Jos\'e~M.~Mour\~{a}o}
\email{jmourao@tecnico.ulisboa.pt}
\author{Jo\~{a}o~P.~Nunes}
\email{jpnunes@math.tecnico.ulisboa.pt}
\address{Departament of Mathematics
and Center for Mathematical Analysis, Geometry and Dynamical Systems\\
Instituto Superior T\'ecnico\\
Av. Rovisco Pais\\
1049-001 Lisboa\\
Portugal}
\author{Miguel~B.~Pereira}
\email{miguel.barbosa@math.uni-augsburg.de}
\address{Institut f\"ur Mathematik\\
University of Augsburg\\
 86159 Augsburg\\ Germany}
\date{\today}
\begin{document}

\begin{abstract}In this paper, we study the analytic continuation to complex time of the Hamiltonian flow of certain $G\times T$-invariant 
functions on the cotangent bundle of a compact connected Lie group $G$ with maximal torus $T$. Namely, we will take the Hamiltonian flows of one 
$G\times G$-invariant function, $h$, and one $G\times T$-invariant function, $f$.
Acting with these complex time Hamiltonian flows on $G\times G$-invariant K\"ahler structures gives new $G\times T$-invariant, but not $G\times G$-invariant, K\"ahler structures on $T^*G$. We study the Hilbert spaces ${\mathcal H}_{\tau,\sigma}$ corresponding to the quantization of $T^*G$ with respect to these non-invariant 
K\"ahler structures. On the other hand, by taking the vertical Schr\"odinger polarization as a starting point, the above 
$G\times T$-invariant Hamiltonian flows also generate families of mixed polarizations $\mathcal{P}_{0,\sigma}, \sigma \in {\mathbb C}, \im \sigma >0$. Each of these mixed polarizations is globally given by a direct sum  of an integrable real distribution and of a complex distribution that defines a K\"ahler structure on the leaves of a foliation of $T^*G$.  
The geometric quantization of $T^*G$ with respect to these mixed polarizations gives rise to unitary partial coherent state transforms, corresponding to KSH maps as defined in \cite{KMN1,KMN2}.   
\end{abstract}

\maketitle
\tableofcontents
\section{Introduction}

Geometric quantization is an approach to the mathematical problem of quantization which aims at defining the quantization of a symplectic manifold $(M,\omega)$ which includes, in particular, the assignement of a Hilbert space of quantum states to $(M,\omega)$. This assignement is far from unique as it depends on the choice of an additional structure, a polarization, which is an involutive Lagrangian distribution in $TM\otimes {\C}$. The dependence of quantization on this choice is one of the most important objects of study in geometric quantization. 

Among symplectic manifolds, cotangent bundles of compact Lie groups, $T^*G$, provide a rich class of spaces for the study of interactions between analysis and representation theory, 
K\"ahler geometry  and geometric quantization. On one hand, Hall's generalization of the classical coherent state transform of Segal-Bargmann \cite{Ha1}, which depends essentially on properties of the heat kernel on $G$ and on its complexification $G_{\C}$, corresponds to a natural pairing map between 
the quantizations of $T^*G$ in the vertical (or Schr\"odinger) polarization and in the K\"ahler polarization provided by the identification $T^*G \cong G_{\C}$ 
\cite{Ha2}. In fact, these two polarizations can be connected by a continuous family of $G\times G$-invariant K\"ahler polarizations, which are related among themselves by compositions of Hall's coherent state transforms (CST) \cite{FMMN1,FMMN2,KW}. On the other hand, these, as well as other more general 
\cite{N,KMN1,KMN2}, natural families of $G\times G$-invariant K\"ahler structures are also very interesting from the point of view of K\"ahler geometry. Indeed, they are generated by the analytic continuation to complex time of Hamiltonian flows on $T^*G$, of a so-called complexifier Hamiltonian function \cite{Th,HK}, and correspond to geodesics for the Mabuchi affine connection on the space of K\"ahler metrics on $T^*G$ \cite{KMN1,MN}. In \cite{BHKMN}, similar families of quantizations for more general symmetric spaces of compact type are presented and studied in the infinite geodesic time limit.

In this paper, we extend these results by considering K\"ahler structures which are not $G\times G$-invariant. For that purpose, we consider $G\times T$-invariant Hamiltonian flows analytically continued to complex time, where $T\subset G$ is a maximal torus. Acting on the $G\times G$-invariant K\"ahler structures of \cite{KMN1} we give examples of new, $G\times T$- but not $G\times G$-invariant, K\"ahler structures on $T^*G$. We study the geometric quantization of $T^*G$ with respect to these K\"ahler polarizations. 

Acting with the $G\times T$-invariant quadratic complexifiers on the (real) Schr\"odinger polarization gives interesting mixed polarizations which define foliations of $T^*G$ by K\"ahler submanifolds isomorphic to $T^*T\cong T_{\C}$. We also study the quantization of $T^*G$ with respect to these mixed polarizations and show that 
they are related to (unitary) partial coherent state transforms on $G$ which are ``partially holomorphic'' analogs of Hall's CST.

\begin{remark}While in this paper we have considered  $G\times T$-invariant Hamiltonian flows generated by complexifiers which correspond to a strictly convex 
$\Ad$-invariant function on the Lie algebra of $G$ and a strictly convex quadratic function on the Lie algebra of $T$, $\frak t$, we expect most of the results to generalize straightforwardly to the case when the quadratic function on 
$\frak t$ is replaced by a more general strictly convex function. 
\end{remark}

\section{Geometry of $T^*G$}

\label{prel}

\subsection{Preliminaries}
Let $G$ be a compact connected Lie group of dimension $n$ and rank $r$. We assume that its Lie algebra, $\g$, is equipped with an $\Ad$-invariant inner-product 
$\langle\cdot, \cdot\rangle$. We will also assume the usual identifications given by left-translation
\begin{equation*}
    T^*G \cong G\times \g^* \cong G\times \g,
\end{equation*}
where $\g$ and $\g^*$ get identified by means of $\langle\cdot,\cdot\rangle.$ 
With these identifications, we have for the tangent spaces
\begin{equation}\label{tangentspace}
T_{(x,y)}(T^*G) \cong \g \oplus \g,\quad (x,y)\in G\times \g.
\end{equation}

Recall also that, by the polar decomposition,
\begin{eqnarray*}\nonumber
T^*G = G\times {\g} & \stackrel{\cong}{\to} & G_{\C} \\ \nonumber
(x,y) & \mapsto & xe^{iy} ,
\end{eqnarray*}
where $G_{\C}$ is the complexification of $G$.

The standard $G\times G$-action (where we take a left-action for the first factor and a right-action for the second) on $T^*G$ then corresponds to
\begin{equation*}
    g\cdot (x,y) \cdot h = (gxh, \Ad_{h^{-1}}(y)), \quad g,h\in G, \quad (x,y)\in G\times \g = T^*G.
\end{equation*}

Let $\{T_j\}_{j=1,\dots,n}$ be an orthonormal basis of $\g$ and let 
$\{X_j\}_{j=1,\dots,n}$ be the corresponding basis of left-invariant vector fields on $G$. 
We will also denote by $\{X_j\}_{j=1,\dots,n}$ the corresponding left-invariant vector fields on  
$T^*G=G\times \g$ with zero component along the second summand in (\ref{tangentspace}). Let $\{w^j\}_{j=1,\dots, n}$ be the corresponding dual basis of left-invariant one-forms 
on $G$ and denote by the same symbols their pull-backs to $T^*G$ by the canonical projection $T^*G\to G.$
Let $\{y^j\}_{j=1,\dots,n}$ be Cartesian coordinates on $\g$ associated to the above orthonormal basis.

Recall that $T^*G$ has a canonical symplectic structure $\omega = -d\theta$, where $\theta$ is the canonical one-form. In the above
coordinates, we have
\begin{equation*}
    \theta = \sum_{j=1}^n y^j w^j
\end{equation*}
and
\begin{equation*}
    \omega = \sum_{j=1}^n \bigg(w^j\wedge dy^j + \frac12 \sum_{k,l=1}^n c_{kl}^j y^j w^k\wedge w^l\bigg),
\end{equation*}
where $\{c_{kl}^j\}_{j,k,l=1,\dots n}$ are the (totally anti-symmetric) structure constants of $\g$ relative to the above orthonormal basis.
In terms of the decomposition in (\ref{tangentspace}) we have
\begin{equation*}
    \theta \colon (G \times \g) \times (\g \oplus \g) \longrightarrow \mathbb{R},
\end{equation*}
with
	\begin{equation}\label{theta}
		\theta_{(x,y)}\left( \left(
		\begin{array}{c}
		U\\
		V\\
		\end{array}
		\right) \right) = \langle y, U \rangle, \quad U, V\in \frak g
	\end{equation}
	while $\omega \colon (G \times \g) \times (\g \oplus \g) \times (\g \oplus \g) \longrightarrow \mathbb{R}$ is given by
	\begin{IEEEeqnarray}{rCl}
		 \nonumber\omega_{(x,y)} \left( \left(\begin{array}{c}
			U\\
			V\\
		\end{array}
		\right), \left( \begin{array}{c}
			W\\
			Z\\
		\end{array}
		\right) \right) & = & \left(\begin{array}{cc}
			U & V\\
		\end{array}
		\right)
		\left( \begin{array}{cc}
			-\ad_y & \id \\
			-\id & 0\\
		\end{array}
		\right)
		\left( \begin{array}{c}
			W\\
			Z\\
		\end{array}
		\right)\\
		& = & \label{eq:omega2} \langle U, Z \rangle - \langle V, W \rangle + \langle y, [U,W] \rangle,\quad U, V, W, Z \in \frak g.
	\end{IEEEeqnarray} 

A left-invariant function $g\colon T^*G\cong G\times \g \to \R$ is determined by a function on $\g$ which we will denote by the same symbol
$g\colon \g\to \R.$ We will denote by $u_g$ the gradient of $g$, that is
$$
\langle u_g(y),A\rangle = dg_y (A), \quad y,A\in \g.
$$
$H_g$ will denote the Hessian of $g$.
Recall that if $g\colon T^*G\to \R$ is $G\times G$-invariant, so that the associated function $g\colon \g\to \R$ is $\Ad$-invariant, then (see \cite{KMN1}, Lemma 3.4)
\begin{equation}\label{commutes}
[y,u_g(y)]=0, \quad {\rm and}\quad \ad_{u_g(y)}=\ad_y\, H_g(y) = H_g(y)\, \ad_y,\quad y\in \g.
\end{equation}
One also has, if $g$ is $G\times G$-invariant, that
\begin{equation}\label{adinvu}
\Ad_x u(y) = u(\Ad_x y),\quad x\in G, y\in \g,
\end{equation}
and for the Hessian, as a linear map ${\frak g} \to {\frak g}$, one obtains in that case
\begin{equation}\label{adhessian}
H_g(\Ad_x y) = \Ad_x \circ H_g(y) \circ \Ad_{x^{-1}},\quad x\in G.
\end{equation}

\begin{proposition}\label{hvf}
	Let $g \colon T^*G \longrightarrow \mathbb{R}$ be a left-invariant function. Then, its Hamiltonian vector field $X_g$ is given by:
	\begin{equation}
	\label{eq:hamvfg}
		X_g |_{(x,y)} = \big( u_g(y), [y,u_g(y)] \big).
	\end{equation}
	\end{proposition}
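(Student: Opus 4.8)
The plan is to read off $X_g$ directly from its defining relation $\iota_{X_g}\omega = dg$, working in the left trivialization $T^*G \cong G\times\g$ and using the explicit formula \eqref{eq:omega2} for $\omega$ together with the identification \eqref{tangentspace} of the tangent spaces. Write $X_g|_{(x,y)} = (A,B)$ with $A=A(x,y)$, $B=B(x,y)\in\g$ to be determined; the aim is to show $A = u_g(y)$ and $B = [y,u_g(y)]$.

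First I would compute the right-hand side. Since $g$ is left-invariant it descends to a function $g\colon\g\to\R$, so $g(x,y)$ depends only on $y$; hence the component of $dg_{(x,y)}$ along the first $\g$-summand in \eqref{tangentspace} vanishes and
$$dg_{(x,y)}\big((W,Z)\big) = dg_y(Z) = \langle u_g(y), Z\rangle, \qquad W,Z\in\g,$$
by definition of the gradient $u_g$. Next I would expand the left-hand side with \eqref{eq:omega2}: for all $W,Z\in\g$,
$$\omega_{(x,y)}\big((A,B),(W,Z)\big) = \langle A, Z\rangle - \langle B, W\rangle + \langle y, [A,W]\rangle.$$
Equating this with $\langle u_g(y), Z\rangle$ for all pairs $(W,Z)$ and using that $W$ and $Z$ vary independently: setting $W=0$ gives $\langle A,Z\rangle = \langle u_g(y),Z\rangle$ for all $Z$, hence $A = u_g(y)$; then setting $Z=0$ gives $\langle B,W\rangle = \langle y,[u_g(y),W]\rangle$ for all $W$. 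Finally, the $\Ad$-invariance of $\langle\cdot,\cdot\rangle$ (equivalently, the total antisymmetry of the structure constants $c_{kl}^j$) gives $\langle y,[u_g(y),W]\rangle = \langle [y,u_g(y)],W\rangle$, so $B = [y,u_g(y)]$, which is \eqref{eq:hamvfg}.

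There is no serious obstacle: the argument is a short linear-algebra computation once $\omega$ is in the trivialized form \eqref{eq:omega2}. The only points needing care are the sign convention for the Hamiltonian vector field (with $\omega = -d\theta$ and $\iota_{X_g}\omega = dg$, Hamilton's equations come out with the usual signs) and the correct use of the invariance identity $\langle y,[A,W]\rangle = \langle[y,A],W\rangle$ in the last step. Note that $G\times G$-invariance of $g$ is not needed here --- only left-invariance is used, so \eqref{commutes} plays no role --- and that for abelian $G$ one recovers $B = [y,u_g(y)] = 0$, consistent with conservation of momentum for a function independent of the base point. As an alternative check one could instead contract the coordinate expression of $\omega$ against the candidate field $(u_g(y),[y,u_g(y)])$ and match with $dg = \sum_j \frac{\partial g}{\partial y^j}\, dy^j$, but the trivialized computation is cleaner.
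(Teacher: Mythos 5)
Your proof is correct and follows essentially the same route as the paper: both work in the left trivialization with the explicit formula \eqref{eq:omega2} for $\omega$ and the definition of the gradient $u_g$. The only cosmetic difference is that you solve for the components $(A,B)$ from the defining relation, whereas the paper verifies the stated candidate directly; the underlying computation (including the $\Ad$-invariance step $\langle y,[u_g(y),W]\rangle=\langle[y,u_g(y)],W\rangle$) is the same.
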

\begin{proof}
	Using equation (\ref{eq:omega2}), we obtain
	\begin{equation}\nonumber
		\omega_{(x,y)} \left( \left(\begin{array}{c}
		u_g (y) \\
		\left[ y,u_g(y) \right] \\
		\end{array}
		\right), \left( \begin{array}{c}
		W\\
		Z\\
		\end{array}
		\right) \right) = \langle u_g(y), Z \rangle.
	\end{equation}
	From the definition of gradient,
	\begin{equation}\nonumber
		dg|_{(x,y)} (Z) = \langle u_g (y), Z \rangle.
	\end{equation}
	This proves equation (\ref{eq:hamvfg}).
\end{proof}

Let now $$h\colon T^*G\to \R$$ be an Hamiltonian function such that
\begin{enumerate}[label={\roman*)}]
\item $h$ is $G\times G$- invariant. This implies that $h$ is determined by an $\Ad$-invariant function on $\g$ which we also denote by $h$;
\item The Hessian $H_h$ on $\g$ is positive definite everywhere;
\item The operator norms $\vert\vert H_h(y)\vert\vert$, $y\in \g$, have a positive lower bound on $\g$.\footnote{This condition is not strictly necessary for 
K\"ahlericity but we will assume it for simplicity. (See Lemma 3.1 in \cite{KMN1}).}
\end{enumerate}

The Hamiltonian vector field of $h$ is given by
\begin{equation*}
X_h |_{(x,y)}  =  \big( u_h(y), 0 \big),
\end{equation*}
and the corresponding Hamiltonian flow is
\begin{equation}
\label{eq:flowofXh} 
\phi^t_{X_h}(x,y)  =  \left( x e^{t u_h(y)} , y \right), 
\end{equation}
for $(x,y)\in G\times \g\cong T^*G,\, t\in \R$. Recall now that the analytic continuation of this Hamiltonian flow 
to imaginary time gives $G\times G$-invariant K\"ahler structures on $T^*G$, as follows. (See \cite{KMN1} and also \cite{N}.)
Let $\tau = \tau_1+i\tau_2\in \C, \tau_1,\tau_2\in \R$ and let
\begin{equation*}
    \C^+=\{\tau \in \C \ | \ {\rm Im}\, \tau >0\}.
\end{equation*}
For $\tau\in C$ consider the maps
\begin{equation}
\begin{array}{ccccc}\label{psitau}
T^{*}G \cong G\times {\g} &{\stackrel{\alpha_h}{\to}} & G\times {\g} &{\stackrel{\psi_{\tau}}{\to}}&  G_{\C} \\
(x,y) &\mapsto & (x,u_h (y)) &\mapsto&  xe^{\tau u_h (y)}.
\end{array}
\end{equation}
Note that $\psi_\tau\circ \alpha_h$ is a diffeomorphism if ${\rm Im}\,\tau \neq 0.$

\begin{proposition}\label{cstau}{\emph{\cite{KMN1}}} For $\tau\in \C^+$, let $J_{\tau,0}$ be the the pull-back of the canonical complex structure on $G_{\C}$ by $\psi_\tau \circ \alpha_h.$ Then, $(T^*G,\omega, J_{\tau,0})$ is a K\"ahler manifold. A (global) K\"ahler potential is given by the Legendre transform of $h$,
$$
\kappa_{\tau,0}(x,y) = 2\tau_2 \left(\langle y,u_h(y)\rangle - h(y)\right).
$$
\end{proposition}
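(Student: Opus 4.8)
The plan is to verify directly that $\kappa_{\tau,0}$ is a global K\"ahler potential for the pair $(\omega,J_{\tau,0})$. Integrability of $J_{\tau,0}$ costs nothing: it is the pullback of the (integrable) canonical complex structure of $G_{\C}$ along the diffeomorphism $\psi_\tau\circ\alpha_h\colon T^*G\to G_{\C}$ (a diffeomorphism whenever $\im\tau\neq 0$, since $u_h$ is a diffeomorphism of $\g$ by (ii)--(iii)). It therefore remains to establish
\begin{equation*}
\omega \;=\; 2i\,\partial_\tau\bar\partial_\tau\,\kappa_{\tau,0}\qquad\text{and}\qquad \partial_\tau\bar\partial_\tau\,\kappa_{\tau,0}>0,
\end{equation*}
where $\partial_\tau,\bar\partial_\tau$ are the Dolbeault operators of $J_{\tau,0}$ and the numerical constant is the one fixed by the normalization of K\"ahler potentials in \cite{KMN1}. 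The first identity is precisely the assertion that $\kappa_{\tau,0}$ is a global K\"ahler potential, while the second inequality is equivalent to positive-definiteness of the symmetric tensor $g_{\tau,0}:=\omega(\,\cdot\,,J_{\tau,0}\,\cdot\,)$; together with $d\omega=0$ and integrability this yields the K\"ahler property.

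To carry out the computation I would use the left-invariant frame $\{X_j\}$ of Section~\ref{prel} together with the polar form of $\psi_\tau\circ\alpha_h$: writing $a:=u_h(y)$ and $\tau=\tau_1+i\tau_2$, one has $xe^{\tau a}=(xe^{\tau_1 a})\,e^{i\tau_2 a}$, so under $G_{\C}\cong G\times\g$ the map reads $(x,y)\mapsto(xe^{\tau_1 u_h(y)},\,\tau_2 u_h(y))$ and $J_{\tau,0}$ is the pullback of the standard complex structure along it. The first real step is to differentiate this map and thereby write down the $(1,0)$-tangent distribution of $J_{\tau,0}$ at a point $(x,y)$; this brings in the differential of $\exp$ on $G_{\C}$, an operator built from $\ad$ (of the form $\tfrac{1-e^{-\ad}}{\ad}$) evaluated at $\tau u_h(y)$, composed with $du_h=H_h(y)$. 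Here it is essential that $h$ be $\Ad$-invariant: the identities \eqref{commutes}, \eqref{adinvu}, \eqref{adhessian} --- in particular $[y,u_h(y)]=0$ and $\ad_{u_h(y)}=\ad_y\,H_h(y)=H_h(y)\,\ad_y$ --- allow one to diagonalize all the operators in play simultaneously and to collapse these series into closed form. The second real step is to expand $2i\,\partial_\tau\bar\partial_\tau\kappa_{\tau,0}$ in the same frame --- using the Legendre identity $d\kappa_{\tau,0}=2\tau_2\,\langle y,d(u_h(y))\rangle=2\tau_2\,\langle H_h(y)y,dy\rangle$ --- and to match it term by term with formula \eqref{eq:omega2} for $\omega$; the nonabelian terms $\langle y,[U,W]\rangle$ in $\omega$ are then reproduced by the connection terms that appear because $\{X_j\}$ is non-holonomic, once more via \eqref{commutes}.

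For the positivity statement I would use invariance again. Both $\omega$ and $J_{\tau,0}$ are $G\times G$-invariant (for $J_{\tau,0}$ this follows from \eqref{adinvu}), hence $g_{\tau,0}$ is $G\times G$-invariant, and since every $y\in\g$ is $\Ad$-conjugate into a fixed Cartan subalgebra $\ttt$, it suffices to check $g_{\tau,0}>0$ at points $(e,y)$ with $y\in\ttt$. There $\ad_y$ is skew-symmetric and preserves the root-space decomposition $\g_{\C}=\ttt_{\C}\oplus\bigoplus_\alpha\g_\alpha$, so $g_{\tau,0}$ block-diagonalizes: on the $\ttt_{\C}$-part it reduces to the flat model and positivity is immediate from $H_h(y)>0$, while on each root block it reduces to positivity of an explicit $2\times2$ matrix built from $\tau_2=\im\tau>0$, the eigenvalues of $\ad_y$ and those of $H_h(y)$ --- positive by assumption (ii). A more structural alternative is to recognize $\psi_\tau\circ\alpha_h$ as the analytic continuation to complex time $\tau$ of the Hamiltonian flow \eqref{eq:flowofXh} of the complexifier $h$, and to invoke the general mechanism of \cite{KMN1} (cf.\ \cite{N}) by which such complex-time flows turn polarizations into K\"ahler polarizations with Legendre-transform potentials, the convexity hypotheses (ii)--(iii) being exactly what guarantees K\"ahlericity (positivity) on all of $T^*G$. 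The step I expect to be the main obstacle is the first one of the middle paragraph: taming the differential of the $G_{\C}$-exponential and showing that, after \eqref{commutes} is used, the holomorphic tangent vectors of $J_{\tau,0}$ take a form simple enough to feed into \eqref{eq:omega2}; the subsequent matching of terms and the positivity check are then essentially bookkeeping --- and in any case this verification is carried out in \cite{KMN1}.
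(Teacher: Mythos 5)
Your proposal is sound, but note that the paper itself supplies no proof of Proposition \ref{cstau}: it is imported verbatim from \cite{KMN1}. The honest comparison is therefore with the paper's proof of the strictly more general Theorem \ref{ohyes} (the case $\sigma\neq 0$), and there your strategy is essentially the one used: compute a holomorphic frame for the pulled-back polarization via the differential of the flow (your ``differential of $\exp$'' operator $\frac{1-e^{-t\ad_{u_h(y)}}}{\ad_{u_h(y)}}H_h(y)$ is exactly what appears in Lemma \ref{derivatives}), feed it into (\ref{eq:omega2}), and use (\ref{commutes})--(\ref{adhessian}) to collapse everything. Two refinements are worth recording. First, the normalization is $\omega=i\partial\bar\partial\kappa_{\tau,0}$, not $2i\partial\bar\partial\kappa_{\tau,0}$ (compare $\iota^*\omega=i\partial\bar\partial\,2\sigma_2\iota^*f$ later in the paper). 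Second, the paper avoids your second-order computation of $\partial\bar\partial\kappa$ altogether: it verifies the first-order identity $\theta(E_j^{\tau,0})=d\bar\lambda_{\tau,0}(E_j^{\tau,0})$ with $\bar\lambda_{\tau,0}=-\bar\tau(\langle y,u_h(y)\rangle-h(y))$, whence $\theta^{(1,0)}=\partial\bar\lambda_{\tau,0}$ and $\omega=-d\theta=i\partial\bar\partial(2\im\bar\lambda_{\tau,0})=i\partial\bar\partial\kappa_{\tau,0}$; this is exactly your Legendre identity $d\kappa_{\tau,0}=2\tau_2\langle H_h(y)y,dy\rangle$ packaged so that no second derivatives of $h$ beyond $H_h$ ever need to be expanded. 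Your positivity argument via reduction to $y\in\ttt$ and root-space blocks is correct and reproduces the positive definiteness of the matrix $C=i\frac{1-e^{2i\tau_2\ad_{u_h(y)}}}{\ad_{u_h(y)}}H_h(y)$ that the paper cites from \cite{KMN}: on $\ttt_{\C}$ one gets $2\tau_2 H_h(y)>0$, and on a root space with $\ad_{u_h(y)}$-eigenvalue $i\alpha(u_h(y))$ one gets the factor $\frac{1-e^{-2\tau_2\alpha(u_h(y))}}{\alpha(u_h(y))}>0$, which together with $H_h(y)>0$ and $[\ad_{u_h(y)},H_h(y)]=0$ gives the claim. So: correct in substance, a slightly heavier route than the one the paper (and \cite{KMN1}) actually takes for the potential, and one harmless constant to fix.
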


\subsection{$G\times T$-invariant quadratic Hamiltonians}
\label{sectionquadratic}

Let $\ttt \subset \frak g$ be a Cartan subalgebra corresponding to a maximal torus $T\subset G$. Recall that $\Ad$-invariance of $\langle\cdot,\cdot\rangle$ implies that if $A\in \ttt$ then $\ad_A\colon \g \to \ttt^\perp$.

Let $$f\colon T^*G\cong G\times \g \to \R$$ be the $G\times T$-invariant Hamiltonian function determined by a real-valued function on $\g$, which we also denote by $f$, 
given by the symmetric form 
$$
f(y) = \frac12 \langle y, Fy\rangle,
$$
where $F\colon \g\to \g$ is a linear real self-adjoint map on $\g$ satisfying
\begin{enumerate}[label={(\roman*)}]
\item $\image F \subset \ttt$;
\item $F_{\vert_{\ttt^\perp}} =0$;
\item $F_{\vert_{\ttt}} >0$,
\end{enumerate}
so that $f$ is determined by a positive quadratic form on the Cartan subalgebra $\ttt$. For simplicity, we will henceforth denote by the same symbol, $F$, 
both the linear map $F$ and its restriction $F_{\vert_{\frak t}}$. No confusion should arise from the context and, in particular, $\det F$ will always stand for 
$\det F_{\vert_{\frak t}}>0$. 

\begin{lemma}The Hamiltonian vector field of $f$ is given by
\begin{equation*}
X_f |_{(x,y)}  =  \big( Fy, [y,Fy] \big)
\end{equation*}
and the corresponding Hamiltonian flow is
\begin{equation*}
\phi^s_{X_f}(x,y)  =  \left( xe^{s F y}, e^{-s \, \ad_{Fy}} y\right),
\end{equation*}
for $(x,y)\in G\times \g \cong T^*G,\, s\in \R.$
\end{lemma}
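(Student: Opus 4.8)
The plan is to first identify the gradient $u_f$ and quote Proposition~\ref{hvf} for the Hamiltonian vector field, and then to integrate the resulting flow by exploiting a conservation law. Since $F$ is self-adjoint, $df_y(A)=\langle Fy,A\rangle$ for every $A\in\g$, so $u_f(y)=Fy$; as $f$ is left-invariant, Proposition~\ref{hvf} gives immediately $X_f|_{(x,y)}=\big(u_f(y),[y,u_f(y)]\big)=\big(Fy,[y,Fy]\big)$, which is the first claim.

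For the flow I would work in the left-trivialized picture of $T(T^*G)$ fixed by the conventions of the section --- the same one under which the flow of $X_h$ reads as in (\ref{eq:flowofXh}) --- so that a curve $s\mapsto(x(s),y(s))$ is an integral curve of $X_f$ precisely when $x(s)^{-1}\dot x(s)=Fy(s)$ and $\dot y(s)=[y(s),Fy(s)]$. The crucial observation is that $s\mapsto Fy(s)$ is constant along such a curve: properties (i)--(ii) of $F$ give $Fz\in\ttt$ for all $z\in\g$; since $\ad$-invariance of $\langle\cdot,\cdot\rangle$ forces $\ad_A(\g)\subset\ttt^\perp$ whenever $A\in\ttt$, we obtain $\dot y=[y,Fy]=-\ad_{Fy}(y)\in\ttt^\perp$; and property (ii) says $F$ kills $\ttt^\perp$. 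Hence $\tfrac{d}{ds}\big(Fy(s)\big)=F\dot y(s)=0$, so $Fy(s)\equiv Fy(0)=:Fy_0$.

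With $Fy$ replaced by the constant $Fy_0$, the system decouples: $\dot y=-\ad_{Fy_0}(y)$ is linear with the unique solution $y(s)=e^{-s\,\ad_{Fy_0}}y_0$, and $x(s)^{-1}\dot x(s)=Fy_0$ then integrates to $x(s)=x_0\,e^{sFy_0}$. To close the argument I would check directly that $\big(x_0 e^{sFy_0},\,e^{-s\,\ad_{Fy_0}}y_0\big)$ solves both equations with the right value at $s=0$ --- in particular that $Fe^{-s\,\ad_{Fy_0}}y_0=Fy_0$, which holds because $\ad_{Fy_0}^{\,k}y_0\in\ttt^\perp$ for every $k\ge 1$ --- and invoke uniqueness of integral curves of the smooth vector field $X_f$.

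The only step requiring any thought is the conservation of $Fy$, which is exactly where the defining properties of $F$ interlock with the fact that $\ad_A\colon\g\to\ttt^\perp$ for $A\in\ttt$; everything else is routine bookkeeping. It is worth contrasting this with the $G\times G$-invariant case behind Proposition~\ref{cstau}: there $[y,u_h(y)]=0$ by (\ref{commutes}) and the $\g$-fibre coordinate is frozen, whereas here it genuinely rotates by $e^{-s\,\ad_{Fy_0}}$.
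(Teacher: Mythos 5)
Your proposal is correct and follows essentially the same route as the paper: compute $u_f(y)=Fy$ and apply Proposition \ref{hvf}, then solve the flow ODE using the key fact that $F\circ\ad_{Fy_0}=0$ (equivalently, that $Fy$ is conserved along the flow because $\dot y\in\ttt^\perp$ and $F$ annihilates $\ttt^\perp$), so that $y(s)=e^{-s\,\ad_{Fy_0}}y_0$ and $x(s)=x_0e^{sFy_0}$. Your phrasing of this as a conservation law for $Fy$ before decoupling, rather than guessing the solution and verifying it as the paper does, is only a cosmetic reordering of the same computation.
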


\begin{proof}The expression for $X_f$ follows from Proposition \ref{hvf}. On the other hand,
let $(x(s),y(s))$ be the integral curve of $X_f$ that on $s=0$ goes through $(x_0,y_0)$. This corresponds to the initial value problem
	\begin{equation*}
		\left\{ 
			\begin{IEEEeqnarraybox}[\IEEEeqnarraystrutmode][c]{rCl}
				\dot{x} & = & dL_x Fy \\
				\dot{y} & = & [y,Fy] \\
				x(0) & = & x_0 \\
				y(0) & = & y_0
			\end{IEEEeqnarraybox}.
		\right.
	\end{equation*}
	Since $F y_0 \in \frak t$, $\ad_{Fy_0}$ maps to ${\frak t}^\perp$. Therefore, $F \circ \ad_{Fy_0} = 0$. From this we conclude that $F e^{-s \, \ad_{Fy_0}} y_0 = F y_0$. We use this fact to prove that $y(s) = e^{-s \, \ad_{Fy_0}} y_0$:
	\begin{IEEEeqnarray}{rCl}\nonumber\frac{d}{ds} e^{-s \, \ad_{Fy_0}} y_0 & = & - \ad_{Fy_0} e^{-s \, \ad_{Fy_0}} y_0 \\ \nonumber
		& = & \left( e^{-s \, \ad_{Fy_0}} y_0, Fy_0 \right) \\ \nonumber
		& = & \left( e^{-s \, \ad_{Fy_0}} y_0, F e^{-s \, \ad_{Fy_0}} y_0 \right).
	\end{IEEEeqnarray}
	$x$ must satisfy $\dot{x} = dL_x Fy = dL_x Fy_0$, $x(0) = x_0$. Therefore, $x(s) = x_0 e^{s F y_0}$.
\end{proof}

Recall now the Hamiltonian flow (\ref{eq:flowofXh}) of the $G\times G$-invariant Hamiltonians $h$ in Proposition \ref{cstau}.

\begin{proposition}
The Hamiltonian flows of $h$ and $f$ commute, namely
	\begin{IEEEeqnarray}{rCl}\nonumber
		\phi^t_{X_h} \circ \phi^s_{X_f}(x,y) & = & \phi^s_{X_f} \circ \phi^t_{X_h}(x,y) \\ \nonumber
		& = & \left( x e^{t u_h(y)} e^{s F y}, e^{-s \, \ad_{Fy}} y\right),
	\end{IEEEeqnarray}
	for $t,s\in \R$.
\end{proposition}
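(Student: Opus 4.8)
The plan is to read the identity straight off the two explicit flow formulas already at hand (the one in the preceding Lemma for $\phi^s_{X_f}$ and formula (\ref{eq:flowofXh}) for $\phi^t_{X_h}$), using as the only nontrivial input the $\Ad$-equivariance of the gradient of an $\Ad$-invariant function recorded in (\ref{adinvu}). A conceptual cross-check via the Poisson bracket is also available, and I will indicate it at the end.

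First I would compute the composition in the order $\phi^s_{X_f}\circ\phi^t_{X_h}$. Since the flow (\ref{eq:flowofXh}) leaves the $\g$-component of $(x,y)$ unchanged, $\phi^t_{X_h}(x,y)=(xe^{tu_h(y)},y)$, and applying $\phi^s_{X_f}$ to this point gives at once
\begin{equation*}
\phi^s_{X_f}\circ\phi^t_{X_h}(x,y)=\big(xe^{tu_h(y)}e^{sFy},\,e^{-s\,\ad_{Fy}}y\big),
\end{equation*}
which is already the asserted expression. Next I would compute the other order $\phi^t_{X_h}\circ\phi^s_{X_f}$. Applying $\phi^s_{X_f}$ first yields $(xe^{sFy},\,e^{-s\,\ad_{Fy}}y)$, and then $\phi^t_{X_h}$ produces $\big(xe^{sFy}\,e^{t\,u_h(e^{-s\,\ad_{Fy}}y)},\,e^{-s\,\ad_{Fy}}y\big)$; the $\g$-component already coincides with the previous one, since $\phi^t_{X_h}$ does not move it. The key step is to simplify the exponent in the group factor. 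Writing $e^{-s\,\ad_{Fy}}y=\Ad_{e^{-sFy}}y$, equivariance (\ref{adinvu}) gives $u_h(e^{-s\,\ad_{Fy}}y)=\Ad_{e^{-sFy}}u_h(y)$, hence
\begin{equation*}
e^{t\,u_h(e^{-s\,\ad_{Fy}}y)}=e^{\,t\,\Ad_{e^{-sFy}}u_h(y)}=e^{-sFy}\,e^{t\,u_h(y)}\,e^{sFy},
\end{equation*}
so the group factor collapses to $x\,e^{sFy}\,e^{-sFy}\,e^{t\,u_h(y)}\,e^{sFy}=x\,e^{t\,u_h(y)}\,e^{sFy}$. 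This matches the first computation, proving both the commutation and the formula. I do not expect a real obstacle; the only point needing care is the conjugation identity $e^{\,\Ad_g A}=g\,e^{A}\,g^{-1}$ together with (\ref{adinvu}).

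Alternatively, and as a conceptual check, one can argue that two complete Hamiltonian flows commute exactly when the Poisson bracket of their Hamiltonians is locally constant, i.e.\ when $\{h,f\}$ is constant on the connected manifold $T^*G$. Using (\ref{eq:omega2}) with $X_h|_{(x,y)}=(u_h(y),0)$ and $X_f|_{(x,y)}=(Fy,[y,Fy])$, one finds $\{h,f\}(x,y)=\langle u_h(y),[y,Fy]\rangle+\langle y,[u_h(y),Fy]\rangle$; moving the bracket onto the first slot by $\Ad$-invariance of $\langle\cdot,\cdot\rangle$ turns each summand into a pairing against $[y,u_h(y)]$, which vanishes by (\ref{commutes}). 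Hence $\{h,f\}\equiv 0$, the flows commute, and the explicit expression then follows from the easy composition above.
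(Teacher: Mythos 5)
Your proof is correct and follows essentially the same route as the paper: compute $\phi^s_{X_f}\circ\phi^t_{X_h}$ directly to get the stated formula, then reduce the other order to it via the identity $e^{sFy}e^{t u_h(e^{-s\,\ad_{Fy}}y)}=e^{t u_h(y)}e^{sFy}$, which is exactly the consequence of (\ref{adinvu}) the paper invokes. Your Poisson-bracket cross-check likewise mirrors the paper's remark that one may instead verify $[X_h,X_f]=0$.
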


\begin{proof}
This can checked by verifying that $[X_h, X_f] = 0$ or by direct computation. Indeed, 
	\begin{IEEEeqnarray}{rCl}\nonumber
		\phi^s_{X_f} \circ \phi^t_{X_h}(x,y) & = & \phi^s_{X_f} \left( x e^{t u_h(y)} , y \right) \\ \nonumber
		& = & \left( x e^{t u_h(y)} e^{s F y}, e^{-s \, \ad_{Fy}} y\right).
	\end{IEEEeqnarray}
	implies that $\phi^t_{X_h} \circ \phi^s_{X_f} = \phi^s_{X_f} \circ \phi^t_{X_h}$, for $t,s \in \R$. In fact, equation (\ref{adinvu}) implies that
    \begin{equation*}
        e^{s Fy} e^{t u_h( e^{-s \, \ad_{Fy}} y )} = e^{t u_h(y)} e^{s F y}. \qedhere
    \end{equation*}
\end{proof}

\begin{lemma}\label{derivatives}
	For $s,t \in {\R}$, the tangent maps $D\phi^t_{X_h} ,D\phi^s_{X_f}\colon \g \oplus \g \longrightarrow \g \oplus \g$,  are given at $(x,y)\in T^*G$ by
	\begin{IEEEeqnarray}{rCl}
		\label{eq:dflowXh} D\phi^t_{X_h}& = & \left( \begin{array}{cc}
			e^{-t \ad_{u_h(y)}} & \frac{\id - e^{-t \ad_{u_h(y)}}}{\ad_{u_h(y)}} H_h(y) \\
			0 & \id
		\end{array} \right), \\
		\label{eq:dflowXf} D\phi^s_{X_f} & = & \left( \begin{array}{cc}
			e^{-s \, \ad_{Fy}} & sF \\
			0 & e^{-s \, \ad_{Fy}}(s \, \ad_y\circ F + \id)
		\end{array} \right).
	\end{IEEEeqnarray}
\end{lemma}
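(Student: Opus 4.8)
The plan is to differentiate the two flow formulas established above component by component, reading every tangent space through the left-trivialization $T^{*}G\cong G\times\g$ of (\ref{tangentspace}). Concretely, I would represent a tangent vector $(U,V)\in\g\oplus\g$ at $(x,y)$ by a curve $\epsilon\mapsto(x(\epsilon),y(\epsilon))$ with $x(0)=x$, $y(0)=y$, $dL_{x^{-1}}\dot x(0)=U$ and $\dot y(0)=V$, push it forward by $\phi^{t}_{X_h}$ (respectively $\phi^{s}_{X_f}$) to a curve $(z(\epsilon),w(\epsilon))$, and then read off $D\phi^{t}_{X_h}(U,V)=\big(dL_{z(0)^{-1}}\dot z(0),\,\dot w(0)\big)$, and similarly for $\phi^{s}_{X_f}$. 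Since in both cases the second component of the flow depends on $y$ only, the lower-left blocks vanish immediately.

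For $D\phi^{t}_{X_h}$, the flow is $(x,y)\mapsto(xe^{tu_h(y)},y)$, so $w(\epsilon)=y(\epsilon)$ gives the bottom row $(0,\id)$. Writing $z(\epsilon)=x(\epsilon)\,e^{tu_h(y(\epsilon))}$ and applying the Leibniz rule, $\dot z(0)$ splits into the contribution of $\dot x(0)$ with the exponential factor held fixed, and the contribution of $\tfrac{d}{d\epsilon}\big|_{\epsilon=0}e^{tu_h(y(\epsilon))}$ with $x$ held fixed. Translating by $z(0)^{-1}=e^{-tu_h(y)}x^{-1}$ and using that left and right translations commute, the first contribution becomes $\Ad_{e^{-tu_h(y)}}U=e^{-t\ad_{u_h(y)}}U$, which is the $(1,1)$ entry. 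For the second contribution I would invoke the standard formula for the differential of the exponential map, $d(\exp)_{A}=dL_{e^{A}}\circ\frac{\id-e^{-\ad_{A}}}{\ad_{A}}$, with $A=A(\epsilon)=tu_h(y(\epsilon))$ and $\dot A(0)=tH_h(y)V$ (since $H_h$ is the derivative of the gradient $u_h$); then $dL_{z(0)^{-1}}$ cancels the factor $dL_{e^{tu_h(y)}}$, the two factors of $t$ cancel, and one is left with the $(1,2)$ entry $\frac{\id-e^{-t\ad_{u_h(y)}}}{\ad_{u_h(y)}}H_h(y)$.

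For $D\phi^{s}_{X_f}$, the flow is $(x,y)\mapsto(xe^{sFy},e^{-s\ad_{Fy}}y)$, and the $(1,1)$ entry is obtained exactly as above, giving $e^{-s\ad_{Fy}}$. The simplification that makes the remaining blocks elementary is the one already exploited for the flow itself: since $\image F\subset\ttt$ and $\ttt$ is abelian, $Fy$ and $FV$ commute, hence $\ad_{Fy}$ and $\ad_{FV}$ commute and $\ad_{Fy}(FV)=0$. Consequently, in the $d\exp$ formula applied to $\epsilon\mapsto e^{sFy(\epsilon)}$ the correction operator acts trivially, so $\tfrac{d}{d\epsilon}\big|_{\epsilon=0}e^{sFy(\epsilon)}=dL_{e^{sFy}}(sFV)$, and after translation by $z(0)^{-1}$ this yields the $(1,2)$ entry $sF$. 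For the lower-right block, the Leibniz rule applied to $w(\epsilon)=e^{-s\ad_{Fy(\epsilon)}}y(\epsilon)$ gives $\dot w(0)=\big(\tfrac{d}{d\epsilon}\big|_{\epsilon=0}e^{-s\ad_{Fy(\epsilon)}}\big)y+e^{-s\ad_{Fy}}V$; because the family $M(\epsilon)=-s\,\ad_{Fy(\epsilon)}$ is pairwise commuting, $\tfrac{d}{d\epsilon}e^{M(\epsilon)}=M'(\epsilon)\,e^{M(\epsilon)}$ as in the scalar case, so the first summand equals $-s\,e^{-s\ad_{Fy}}\ad_{FV}\,y=s\,e^{-s\ad_{Fy}}\ad_y(FV)$. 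Collecting the two summands gives $e^{-s\ad_{Fy}}(s\,\ad_y\circ F+\id)V$, the asserted lower-right entry, and the lower-left block vanishes since $w$ does not depend on $x$.

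I expect the only delicate point to be the careful bookkeeping of left- versus right-translations when differentiating the products $x(\epsilon)\,e^{A(\epsilon)}$, together with the correct invocation of the $d\exp$ formula in the $h$-block; the $f$-block is entirely routine once one records that $\image F\subset\ttt$ forces all the relevant adjoint operators into a single commuting family. As a sanity check, one verifies that at $t=0$, respectively $s=0$, both matrices reduce to the identity, as they must.
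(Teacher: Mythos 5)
Your proposal is correct and follows essentially the same route as the paper: the authors also differentiate the explicit flow formulas along a curve representing $(U,V)$ (they take the specific curve $(xe^{\epsilon U},y+\epsilon V)$), invoke the differential of the exponential map to produce the $\frac{\id-e^{-t\ad_{u_h(y)}}}{\ad_{u_h(y)}}H_h(y)$ block, and use $[Fy,FV]=0$ (from $\image F\subset\ttt$ abelian) to collapse the correction operator to $sF$ and to differentiate $e^{-s\,\ad_{F(y+tV)}}$ as a commuting family. No gaps.
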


\begin{proof}
	 To prove (\ref{eq:dflowXh}), let $\gamma(s) = (\gamma^1(s), \gamma^2(s)) = (xe^{sU}, y+sV)$, $U,V\in \frak g$. Then,
	\begin{IEEEeqnarray}{rCl}\nonumber
		D_{(x,y)} \phi^t_{X_h}(U,V) & = & \frac{d}{ds} \Bigg|_{s=0} \phi^t_{X_h}(\gamma^1(s), \gamma^2(s)) \\ \nonumber
		& = & \frac{d}{ds} \Bigg|_{s=0} \phi^t_{X_h}(xe^{sU}, y+sV) \\ \nonumber
		& = & \frac{d}{ds} \Bigg|_{s=0} \left( x e^{sU} e^{t u_h(y+sV)}, y + sV\right) \\ \nonumber
		& = & \left( dL_{xe^{tu_h(y)}}\left( \Ad_{e^{-tu_h(y)}} U + \frac{\id - e^{-t \ad_{u_h(y)}}}{\ad_{u_h(y)}} H_h(y) V \right), V \right) \\ \nonumber
		& = & \left( dL_{xe^{tu_h(y)}}\left( e^{-t\ad_{u_h(y)}} U + \frac{\id - e^{-t \ad_{u_h(y)}}}{\ad_{u_h(y)}} H_h(y) V \right), V \right).
	\end{IEEEeqnarray}
	To prove (\ref{eq:dflowXf}), by the same reasoning as before,
	\begin{IEEEeqnarray*}{rCl+x*}
        \IEEEeqnarraymulticol{3}{l}{D_{(x,y)} \phi^s_{X_f}(U,V)}\\
        \quad & = & \frac{d}{dt} \Bigg|_{t=0} \phi^s_{X_f}(xe^{tU},y+tV) \\ 
		\quad & = & \frac{d}{dt} \Bigg|_{t=0} \left( xe^{tU} e^{sF(y+tV)}, e^{-s \, \ad_{F(y+tV)}}(y+tV)\right) \\ 
		\quad & = & \Bigg( dL_{xe^{sFy}} \left( e^{-s \, \ad_{Fy}} U + \frac{\id - e^{-s \, \ad_{Fy}}}{\ad_{Fy}} FV \right), e^{-s \, \ad_{Fy}}(-s \, \ad_{FV})(y) + e^{-s \, \ad_{Fy}} V \Bigg)  \\ 
		\quad & = & \left( dL_{xe^{sFy}} \left( e^{-s \, \ad_{Fy}} U + sFV \right), e^{-s\,\ad_{Fy}}(s \, \ad_y\circ F + \id) V \right). & \hfill\qedhere
	\end{IEEEeqnarray*}
\end{proof}

\begin{lemma}\label{composederivatives}Let $t,s \in {\R}$. Then,
\begin{equation*}
D\left(\phi_{X_h}^{-t} \circ \phi_{X_f}^{-s}\right)_{(\phi_{X_h}^{t} \circ \phi_{X_f}^{s})(x,y)}=
\left[\begin{array}{cc}
			e^{t \ad_{u_h(y)}} e^{s \, \ad_{Fy}} & \frac{\id - e^{t \ad_{u_h(y)}}}{\ad_{u_h(y)}} H_h(y) e^{s \, \ad_{Fy}} -s F \\
			0 & -s \, \ad_y \circ F + e^{s \, \ad_{Fy}}
		\end{array}\right].
\end{equation*}
\end{lemma}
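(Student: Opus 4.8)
The plan is to reduce the statement, via the chain rule and the commutativity of the two flows, to a product of the two matrices computed in Lemma~\ref{derivatives} followed by a single upper-triangular block inversion; all the genuine content is then a sequence of cancellations forced by the structural hypotheses $\image F\subset\ttt$, $F|_{\ttt^\perp}=0$ and $\ttt$ abelian, together with the relation $\ad_{u_h(y)}=H_h(y)\,\ad_y$ from (\ref{commutes}). First I would observe that, since $\phi^t_{X_h}$ and $\phi^s_{X_f}$ commute, the map $\phi^{-t}_{X_h}\circ\phi^{-s}_{X_f}$ is the inverse of $\phi^{s}_{X_f}\circ\phi^{t}_{X_h}$, so, with respect to the left-trivialization of $T(T^*G)$, the matrix we want is simply $\bigl(D(\phi^s_{X_f}\circ\phi^t_{X_h})_{(x,y)}\bigr)^{-1}$, where the base point is $(\phi^s_{X_f}\circ\phi^t_{X_h})(x,y)=(\phi^t_{X_h}\circ\phi^s_{X_f})(x,y)$ as in the statement. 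By the chain rule this derivative equals $D(\phi^s_{X_f})_{\phi^t_{X_h}(x,y)}\circ D(\phi^t_{X_h})_{(x,y)}$, and since $\phi^t_{X_h}(x,y)=(xe^{tu_h(y)},y)$ has fibre coordinate exactly $y$, both factors are the matrices of (\ref{eq:dflowXh}) and (\ref{eq:dflowXf}) evaluated at $y$ (no term $e^{-s\,\ad_{Fy}}y$ appears at this stage). Multiplying them out gives
$$D(\phi^s_{X_f}\circ\phi^t_{X_h})_{(x,y)}=\begin{pmatrix} e^{-s\,\ad_{Fy}}e^{-t\ad_{u_h(y)}} & e^{-s\,\ad_{Fy}}\dfrac{\id-e^{-t\ad_{u_h(y)}}}{\ad_{u_h(y)}}H_h(y)+sF \\[1ex] 0 & e^{-s\,\ad_{Fy}}\bigl(s\,\ad_y\circ F+\id\bigr)\end{pmatrix}.$$

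Next I would invert this via $\begin{pmatrix}A&B\\0&D\end{pmatrix}^{-1}=\begin{pmatrix}A^{-1}&-A^{-1}BD^{-1}\\0&D^{-1}\end{pmatrix}$. The $(1,1)$ entry $A^{-1}=e^{t\ad_{u_h(y)}}e^{s\,\ad_{Fy}}$ is immediate. For $D^{-1}$ the key observation is that $\ad_y\circ F$ is nilpotent of order two: for any $V$ one has $FV\in\ttt$, hence $[y,FV]=-\ad_{FV}(y)\in\ttt^\perp$ (because $\ad$ by a $\ttt$-element maps into $\ttt^\perp$), and then $F([y,FV])=0$ since $F|_{\ttt^\perp}=0$; thus $(s\,\ad_y F+\id)^{-1}=\id-s\,\ad_y F$. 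Combined with the two elementary identities $F\circ e^{s\,\ad_{Fy}}=F$ (since $\ad_{Fy}^k$ has image in $\ttt^\perp$ for $k\ge1$) and $e^{s\,\ad_{Fy}}\circ F=F$ (since $[Fy,FV]=0$ as $\ttt$ is abelian), one gets $D^{-1}=(\id-s\,\ad_y F)e^{s\,\ad_{Fy}}=e^{s\,\ad_{Fy}}-s\,\ad_y\circ F$, which is precisely the claimed $(2,2)$ entry.

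Finally, for the $(1,2)$ entry $-A^{-1}BD^{-1}$, I would first simplify $A^{-1}B=-\dfrac{\id-e^{t\ad_{u_h(y)}}}{\ad_{u_h(y)}}H_h(y)+s\,e^{t\ad_{u_h(y)}}F$, using $e^{t\ad_{u_h(y)}}\cdot\dfrac{\id-e^{-t\ad_{u_h(y)}}}{\ad_{u_h(y)}}=-\dfrac{\id-e^{t\ad_{u_h(y)}}}{\ad_{u_h(y)}}$ and $e^{s\,\ad_{Fy}}\circ F=F$, and then expand $\bigl(\dfrac{\id-e^{t\ad_{u_h(y)}}}{\ad_{u_h(y)}}H_h(y)-s\,e^{t\ad_{u_h(y)}}F\bigr)\bigl(e^{s\,\ad_{Fy}}-s\,\ad_y F\bigr)$. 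The term in $F\circ\ad_y\circ F$ vanishes by the nilpotency above; the remaining $s$-linear terms collapse because $H_h(y)\circ\ad_y=\ad_{u_h(y)}$ by (\ref{commutes}), which turns $\dfrac{\id-e^{t\ad_{u_h(y)}}}{\ad_{u_h(y)}}H_h(y)\,\ad_y\circ F$ into $(\id-e^{t\ad_{u_h(y)}})F$ and cancels the $e^{t\ad_{u_h(y)}}F$ terms, leaving exactly $\dfrac{\id-e^{t\ad_{u_h(y)}}}{\ad_{u_h(y)}}H_h(y)e^{s\,\ad_{Fy}}-sF$. The main obstacle is nothing deep but purely organizational: correctly chaining the five or six ``absorption'' identities (an $F$ swallowing an $e^{s\,\ad_{Fy}}$, or $F\ad_yF=0$, or $H_h\ad_y=\ad_{u_h}$) so that the cancellations line up; once the order-two nilpotency of $\ad_y\circ F$ is recorded the rest is bookkeeping.
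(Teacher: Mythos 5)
Your proof is correct and follows essentially the same route as the paper, which simply invokes the chain rule, Lemma \ref{derivatives} and the identities (\ref{commutes}), (\ref{adinvu}) without spelling out the algebra; in particular, your cancellation in the $(1,2)$ entry reproduces exactly the identity the authors verify explicitly in the proof of Proposition \ref{polsts}. The only (harmless) organizational deviation is that you invert the forward derivative $D(\phi^{s}_{X_f}\circ\phi^{t}_{X_h})_{(x,y)}$ as a block-triangular matrix rather than composing the derivatives of the inverse flows at the shifted base point, which is why you never need the equivariance identity (\ref{adinvu}) that the paper cites.
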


\begin{proof}This follows from the chain rule, Lemma \ref{derivatives} and equations (\ref{commutes}), (\ref{adinvu}).
\end{proof}

The following will also be useful later on.

\begin{lemma}\label{invertible}
For every $y \in \g, s\in {\R}$, the linear map $e^{s \, \ad_{Fy}} - s \, \ad_y \circ F$ is an automorphism of $\g$.
\end{lemma}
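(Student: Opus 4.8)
The plan is to analyze the operator $A_s := e^{s\,\ad_{Fy}} - s\,\ad_y\circ F$ by decomposing $\g$ with respect to the Cartan subalgebra $\ttt$, exploiting the facts recorded in Section \ref{sectionquadratic}: $Fy\in\ttt$, so $\ad_{Fy}$ maps $\g$ into $\ttt^\perp$ (indeed $\ad_{Fy}$ preserves $\ttt^\perp$ and kills $\ttt$), and $F$ kills $\ttt^\perp$ with $F|_\ttt>0$. First I would observe that both summands of $A_s$ act in a way compatible with the splitting $\g=\ttt\oplus\ttt^\perp$: $e^{s\,\ad_{Fy}}$ is the identity on $\ttt$ and preserves $\ttt^\perp$ (it is the exponential of a skew map preserving $\ttt^\perp$), while $\ad_y\circ F$ sends $\ttt$ into $\ttt^\perp$ (since $\image F\subset\ttt$ and $\ad_y$ maps $\ttt$ to... well, $\ad_y$ need not preserve $\ttt$, but $\ad_A$ for $A\in\ttt$ maps into $\ttt^\perp$ only when we know $[\ttt,\ttt]=0$; here $Fy\in\ttt$ and for $A\in\ttt$, $\ad_A\colon\g\to\ttt^\perp$ as noted in the text) and kills $\ttt^\perp$. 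Writing a general vector as $v = v_\ttt + v_\perp$, one computes $A_s v = \big(v_\ttt\big) + \big(e^{s\,\ad_{Fy}}v_\perp - s\,\ad_y F v_\ttt\big)$, where the first component lies in $\ttt$ and the second in $\ttt^\perp$. Hence $A_s$ is block lower-triangular with respect to $\ttt\oplus\ttt^\perp$, with diagonal blocks $\id_\ttt$ and $e^{s\,\ad_{Fy}}|_{\ttt^\perp}$.

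Next I would conclude invertibility from this triangular structure: a block lower-triangular operator is invertible iff both diagonal blocks are. The $\ttt$-block is the identity, hence invertible; the $\ttt^\perp$-block is $e^{s\,\ad_{Fy}}$ restricted to the invariant subspace $\ttt^\perp$, which is invertible because $\ad_{Fy}$ is skew-adjoint (as $Fy\in\g$ and $\langle\cdot,\cdot\rangle$ is $\Ad$-invariant), so its exponential is orthogonal, in particular invertible, and it preserves $\ttt^\perp$. Therefore $A_s$ is an automorphism of $\g$ for every $y$ and every $s\in\R$.

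The main point requiring care — the only genuine obstacle — is verifying that $\ad_{Fy}$ genuinely preserves the splitting, i.e.\ that it maps $\ttt^\perp$ into $\ttt^\perp$ and $\ttt$ into $\ttt^\perp$ (so that its exponential restricts nicely). The second is exactly the statement recalled at the start of Subsection \ref{sectionquadratic} (for $A\in\ttt$, $\ad_A\colon\g\to\ttt^\perp$). For the first, I would use $\Ad$-invariance: for $A\in\ttt$, $B\in\ttt^\perp$, $C\in\ttt$ we have $\langle[A,B],C\rangle = -\langle B,[A,C]\rangle = -\langle B, 0\rangle = 0$ since $[A,C]\in[\ttt,\ttt]=0$; hence $[A,B]\in\ttt^\perp$. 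Thus $\ad_{Fy}$ is a skew-adjoint operator preserving both $\ttt$ (on which it vanishes) and $\ttt^\perp$, and the argument closes. An alternative, even shorter route: compute $\det A_s$ in this basis and note it equals $\det\!\big(e^{s\,\ad_{Fy}}|_{\ttt^\perp}\big) = 1 \neq 0$; or simply remark that the inverse is explicitly the block lower-triangular operator with diagonal blocks $\id_\ttt$ and $e^{-s\,\ad_{Fy}}|_{\ttt^\perp}$ and off-diagonal block $s\, e^{-s\,\ad_{Fy}}\ad_y F$. I expect the write-up to be short, the only subtlety being the bookkeeping of which subspace each term lands in.
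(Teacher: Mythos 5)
Your proof is correct and is essentially the same argument as the paper's: both rest on the decomposition $\g=\ttt\oplus\ttt^\perp$ together with the observations that $e^{s\,\ad_{Fy}}$ fixes $\ttt$ and that both $\ad_{Fy}$ and $\ad_y\circ F$ land in $\ttt^\perp$. The paper packages this as a direct kernel computation (the $\ttt$-component of a kernel element must vanish, whence $FV=0$ and then $V=0$ by invertibility of the exponential), which is just your block-lower-triangularity read off one vector at a time.
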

\begin{proof} Let $s\neq 0$. 
	We prove that $\ker\left( e^{s \, \ad_{Fy}} - s \, \ad_y \circ F \right) = 0$. Let $V \in \ker\left( e^{s \, \ad_{Fy}} - s \, \ad_y \circ F \right)$. Then,
	\begin{equation*}
		e^{s \, \ad_{Fy}}V - s \, \ad_y \circ F\,V = 0.
	\end{equation*}
	We can split the terms of this equation that belong to ${\frak t}$ and those that belong to ${\frak t}^{\perp}$:
	\begin{equation}\nonumber
		\underbrace{V^{\parallel}}_{\in {\frak t}} + \underbrace{V^\perp}_{\in {\frak t}^\perp} + \sum_{k=1}^{\infty} \frac{s^k}{k!} \underbrace{ \ad^k_{\underbrace{Fy}_{\in {\frak t}}} V}_{\in {\frak t}^\perp} - s \underbrace{\ad_y \circ\underbrace{F\,V}_{\in {\frak t}}}_{\in {\frak t}^\perp} = 0,
	\end{equation}
	from which we conclude that $V^\parallel = 0$. Therefore,
	\begin{equation*}
		V^\parallel = 0 \Longrightarrow  \ad_y \circ F V = 0 \Longrightarrow e^{s \, \ad_{Fy}} V = 0 \Longrightarrow V = 0. \qedhere
	\end{equation*}
\end{proof}

\begin{definition}Let $F$ be a linear self-adjoint map on $\frak g$ satisfying the properties listed in the beginning of this Section and let $h$ be as in Proposition 
\ref{cstau}. For $\tau,\sigma\in \C$ define
\begin{equation*}
\begin{array}{rcl}
A_{\tau,\sigma}\colon  T^*G &\to& G_{\C}\\
(x,y)&\mapsto& xe^{\tau u_h(y)} e^{\sigma Fy}.
\end{array}
\end{equation*}
\end{definition}

Note that, $A_{\tau,0}=\psi_\tau \circ \alpha_h$ in (\ref{psitau}) and that, for $\tau\in \C, \sigma\in \R$, also
\begin{equation*}
A_{\tau,\sigma} = \psi_\tau \circ \alpha_h \circ \phi^\sigma_{X_f}.
\end{equation*}

This implies the following 
\begin{lemma}For $\tau\in \C^+, \sigma\in\R,$ the map $A_{\tau,\sigma}$ is a global diffeomorphism.
\end{lemma}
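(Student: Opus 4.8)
The plan is to establish that $A_{\tau,\sigma}$ is a diffeomorphism by writing it as a composition of maps that are already known to be diffeomorphisms. Recall the identity $A_{\tau,\sigma} = \psi_\tau \circ \alpha_h \circ \phi^\sigma_{X_f}$, valid for $\tau \in \C$ and $\sigma \in \R$. Here $\phi^\sigma_{X_f} \colon T^*G \to T^*G$ is the time-$\sigma$ Hamiltonian flow of $f$, which is a diffeomorphism of $T^*G$ for every $\sigma \in \R$ since it is a (complete) Hamiltonian flow on a compact-fiber cotangent bundle; its inverse is $\phi^{-\sigma}_{X_f}$, also computed explicitly in the Lemma above. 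Meanwhile, for $\tau \in \C^+$, the map $\psi_\tau \circ \alpha_h$ is exactly $A_{\tau,0}$, which by Proposition~\ref{cstau} is the pull-back chart realizing the K\"ahler structure $J_{\tau,0}$; in particular it is a global diffeomorphism $T^*G \to G_\C$ (this is the remark immediately following (\ref{psitau}), that $\psi_\tau \circ \alpha_h$ is a diffeomorphism when $\im\tau \neq 0$).

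First I would invoke the previous Lemma's explicit formula to note that $\phi^\sigma_{X_f}$ has a smooth global inverse, hence is a diffeomorphism of $T^*G$ onto itself for each real $\sigma$. Second, I would recall from Proposition~\ref{cstau} (and the sentence after (\ref{psitau})) that for $\tau \in \C^+$ the map $A_{\tau,0} = \psi_\tau \circ \alpha_h \colon T^*G \to G_\C$ is a global diffeomorphism. Third, since $\sigma$ is assumed real here, the displayed identity $A_{\tau,\sigma} = A_{\tau,0} \circ \phi^\sigma_{X_f}$ exhibits $A_{\tau,\sigma}$ as a composition of two diffeomorphisms, hence $A_{\tau,\sigma} \colon T^*G \to G_\C$ is itself a global diffeomorphism. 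Its inverse is $\phi^{-\sigma}_{X_f} \circ A_{\tau,0}^{-1}$.

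There is essentially no obstacle: the only point requiring a word of care is that the factorization $A_{\tau,\sigma} = \psi_\tau \circ \alpha_h \circ \phi^\sigma_{X_f}$ genuinely holds on the nose, which one checks from the definitions using $\phi^\sigma_{X_f}(x,y) = (xe^{sFy}, e^{-s\,\ad_{Fy}}y)$ together with the $\Ad$-equivariance $\Ad_x u_h(y) = u_h(\Ad_x y)$ of (\ref{adinvu}) and the fact that $Fy \in \ttt$ commutes with $F(e^{-s\,\ad_{Fy}}y) = Fy$ — indeed this is precisely the computation already carried out in the proof that the flows of $h$ and $f$ commute, where the identity $e^{sFy}e^{tu_h(e^{-s\,\ad_{Fy}}y)} = e^{tu_h(y)}e^{sFy}$ was verified. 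Granting that identity, the statement is immediate. I would therefore write the proof in two or three lines, citing the Lemma for invertibility of $\phi^\sigma_{X_f}$, Proposition~\ref{cstau} for the diffeomorphism property of $A_{\tau,0}$, and the displayed factorization.

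\begin{proof}
Since $\sigma \in \R$, we have the factorization
\begin{equation*}
A_{\tau,\sigma} = \psi_\tau \circ \alpha_h \circ \phi^\sigma_{X_f} = A_{\tau,0} \circ \phi^\sigma_{X_f}.
\end{equation*}
By the Lemma above, the Hamiltonian flow $\phi^\sigma_{X_f}$ is a globally defined smooth map of $T^*G$ with smooth inverse $\phi^{-\sigma}_{X_f}$, hence a global diffeomorphism of $T^*G$. By Proposition~\ref{cstau} (see also the remark following (\ref{psitau})), for $\tau \in \C^+$ the map $A_{\tau,0} = \psi_\tau \circ \alpha_h \colon T^*G \to G_\C$ is a global diffeomorphism. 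Being a composition of two global diffeomorphisms, $A_{\tau,\sigma} \colon T^*G \to G_\C$ is a global diffeomorphism, with inverse $\phi^{-\sigma}_{X_f} \circ A_{\tau,0}^{-1}$.
\end{proof}
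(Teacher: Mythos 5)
Your proof is correct and is essentially the paper's own argument: the paper simply records the factorization $A_{\tau,\sigma}=\psi_\tau\circ\alpha_h\circ\phi^\sigma_{X_f}$ and states that it ``implies'' the lemma, which is exactly the composition-of-diffeomorphisms reasoning you spell out. (One small wording slip: the fibers of $T^*G\cong G\times\g$ are not compact, so completeness of the flow should be justified by the explicit formula $\phi^s_{X_f}(x,y)=\bigl(xe^{sFy},e^{-s\,\ad_{Fy}}y\bigr)$ --- which you do cite --- rather than by ``compact fibers''.)
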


Actually, a much stronger result holds.

\begin{theorem}\label{sure}Let $\tau, \sigma\in \C^+$. Then $A_{\tau,\sigma}$ is a global diffeomorphism.
\end{theorem}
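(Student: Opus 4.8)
The plan is to show that $A_{\tau,\sigma}$ is a diffeomorphism by a degree/covering argument combined with injectivity, reducing the general complex $\sigma$ case to the already-known case $\sigma\in\R$ via a deformation. First I would observe that $A_{\tau,\sigma}$ differs from the known diffeomorphism $A_{\tau,0}=\psi_\tau\circ\alpha_h$ only by the right-most factor $e^{\sigma Fy}$, and that since $\image F\subset\ttt$, this factor lies in the complex torus $T_\C = \exp(\ttt_\C)\subset G_\C$. Writing $y = y^\parallel + y^\perp$ according to $\g = \ttt\oplus\ttt^\perp$, we have $Fy = Fy^\parallel\in\ttt$, so the extra factor only depends on $y^\parallel$. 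This suggests fibering the problem: along the $\ttt^\perp$-directions $A_{\tau,\sigma}$ behaves like $A_{\tau,0}$, while the genuinely new behavior is concentrated in a $T^*T$-like slice where everything is abelian and $e^{\tau u_h(y)+\sigma F y}$ can be analyzed directly.

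Concretely, I would first prove \emph{local} diffeomorphism by computing the Jacobian of $A_{\tau,\sigma}$ and showing it is everywhere invertible. Using the factorization $A_{\tau,\sigma}(x,y) = x\,e^{\tau u_h(y)}\,e^{\sigma Fy}$ and the identification of $T_{xe^{\tau u_h(y)}e^{\sigma Fy}}G_\C$ with $\g_\C$ via left translation, differentiating in $x$ gives the (invertible) adjoint-type factors, differentiating in $y$ produces a block of the form $\tau\,H_h(y) + \sigma F + (\text{commutator corrections})$; the key point is that $H_h(y)$ is positive definite (hypothesis (ii) on $h$) and $F$ is self-adjoint with $F\geq 0$, so for $\tau,\sigma\in\C^+$ the ``imaginary parts'' $\tau_2 H_h(y) + \sigma_2 F$ combine to a positive-definite operator on the relevant subspace, forcing the block — hence the full Jacobian — to be invertible. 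Here Lemma \ref{derivatives} and equation (\ref{commutes}) do most of the bookkeeping, and $[y,u_h(y)]=0$ together with $\ad_{Fy}$ mapping $\ttt\to 0$, $\ttt^\perp\to\ttt^\perp$ keeps the block structure manageable.

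Next I would upgrade local to global diffeomorphism. The target $G_\C$ is connected and simply connected when $G$ is (and in general one passes to the appropriate cover, or argues directly), and $T^*G\cong G\times\g$ is connected; a proper local diffeomorphism onto a connected simply connected space is a diffeomorphism, so it suffices to check \emph{properness} of $A_{\tau,\sigma}$, i.e.\ that $|y|\to\infty$ or $x\to$ any limit forces $A_{\tau,\sigma}(x,y)\to\infty$ in $G_\C$. Properness follows from the polar-decomposition description of $G_\C$: the $G$-part of $xe^{\tau u_h(y)}e^{\sigma Fy}$ stays in the compact group $G$, while the ``radial'' part is governed by $\tau_2 u_h(y) + \sigma_2 Fy$, and using hypotheses (ii), (iii) on $h$ (the Hessian bounded below, forcing $|u_h(y)|\to\infty$) one shows this vector escapes to infinity. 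Alternatively — and this is probably the cleanest route — I would deform: for $\sigma\in\C^+$ write $\sigma = \sigma_1 + i\sigma_2$ and use the earlier factorization $A_{\tau,\sigma} = A_{\tau,i\sigma_2}\circ(\text{real flow }\phi^{\sigma_1}_{X_f})$, reducing to purely imaginary $\sigma$; then the map $[0,1]\ni\lambda\mapsto A_{\tau, i\lambda\sigma_2 + (1-\lambda)\cdot(\text{small positive})}$ interpolates within diffeomorphisms, and injectivity is preserved because the composition $e^{\tau u_h(y)}e^{\sigma Fy}$, restricted to the abelian slice, is an injective holomorphic map of $\ttt_\C$-type (a genuinely one-variable computation once $u_h$ and $F$ are diagonalized simultaneously on $\ttt$, using (\ref{commutes})).

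The main obstacle I expect is \emph{injectivity} for genuinely complex $\sigma$: unlike the $\sigma\in\R$ case, where $A_{\tau,\sigma}$ is literally a composition of a diffeomorphism with a real Hamiltonian flow, for $\sigma\in\C^+$ one must rule out distinct $(x,y),(x',y')$ mapping to the same point of $G_\C$. The resolution should come from first recovering $y^\parallel$ (equivalently $Fy$) from the image by looking at its ``$\ttt_\C$-radial component,'' then recovering $y^\perp$ and finally $x$ by the already-established injectivity of $A_{\tau,0}$-type maps in the complementary directions; making the separation of the $\ttt$ and $\ttt^\perp$ contributions precise — in particular that $e^{\sigma Fy}$ commutes past $e^{\tau u_h(y)}$ appropriately, which is where (\ref{commutes}) saying $H_h(y)$ commutes with $\ad_y$ and preserves $\ttt$ is essential — is the technical heart of the argument.
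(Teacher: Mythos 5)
The local-diffeomorphism part of your plan is sound (it is essentially contained in the positivity computation that appears later in Theorem \ref{ohyes}), and your covering-space reduction could be patched even though $G_{\C}$ is not simply connected ($T^*G$ and $G_{\C}$ both deformation retract onto $G$, and $A_{\tau,\sigma}$ restricted to the zero section is the inclusion $G\hookrightarrow G_{\C}$, so a proper local diffeomorphism would be a one-sheeted cover). The genuine gap sits exactly where you place ``the technical heart'': global injectivity for $\sigma_2>0$. Your proposed resolution rests on the assumption that $e^{\sigma Fy}$ ``commutes past'' $e^{\tau u_h(y)}$ and that $u_h$ and $F$ can be simultaneously diagonalized on an abelian slice. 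This fails: $Fy=Fy^{\parallel}$ lies in the \emph{fixed} Cartan subalgebra $\ttt$, whereas $u_h(y)$ lies in the centralizer of $y$, which is a different, $y$-dependent Cartan unless $y\in\ttt$; equation (\ref{commutes}) gives $[y,u_h(y)]=0$ but says nothing about $[Fy,u_h(y)]$. Consequently there is no ``$\ttt_{\C}$-radial component'' of $xe^{\tau u_h(y)}e^{\sigma Fy}$ from which $y^{\parallel}$ can be read off, and the problem does not reduce to a one-variable computation on $T_{\C}$. For the same reason your properness argument is shaky: the polar decomposition of $xe^{\tau u_h(y)}e^{\sigma Fy}$ is not governed by $\tau_2 u_h(y)+\sigma_2Fy$, since the two exponentials do not combine. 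Finally, the deformation alternative is circular as stated: interpolating ``within diffeomorphisms'' along $\lambda\mapsto A_{\tau,i\lambda\sigma_2+\dots}$ presupposes that the set of $\lambda$ for which one has a diffeomorphism is open and closed, which requires uniform properness and the very global injectivity you are trying to prove.

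The paper's actual mechanism is quite different and supplies precisely the missing ingredient. Since $f=\tfrac12\langle\mu,F\mu\rangle$ is a function of the moment map $\mu=y^{\frak t}$ of the right $T$-action, and $A_{\tau,\sigma}$ differs from the known diffeomorphism $A_{\tau,0}=\psi_\tau\circ\alpha_h$ by the complexified Hamiltonian flow of $f$, the paper works $T_{\C}$-orbit by $T_{\C}$-orbit: Sjamaar's holomorphic slices give local models $S\times T_{\C}\hookrightarrow G_{\C}$, on which the imaginary-time flow preserves each orbit and acts in the coordinates $(u,e^{\theta+ia})$ by $a\mapsto a+\sigma_2F\tilde\mu(u,a)$. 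By Burns--Guillemin, $\tilde\mu=\tfrac12\,\partial\rho/\partial a$ for a strictly plurisubharmonic potential $\rho$, so this fiberwise map is a composition of Legendre transforms; strict convexity of $\rho$ in $a$ together with $F>0$ on $\ttt$ yields injectivity (and bijectivity) on each orbit, hence a global diffeomorphism of $G_{\C}$. If you want to rescue your outline, replace the commutation/separation step by this reduction to $T_{\C}$-orbits and the convexity (Legendre-transform) argument.
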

\begin{proof}
Let $r$ be the rank of $G$ and let $T_{\C}\cong ({\C}^*)^r$ be the complexification of $T$.
Consider the standard right $T_{\C}$-action on $G_{\C}$.
>From Theorems 1.12 and 1.23 in \cite{S}, at any point in $G_{\C}$ there are holomorphic slices for the action of $T_{\C}$. That is, for each $p\in G_{\C}$ 
there is a locally closed analytic subspace $S\subset G_{\C}$ and a $T_{\C}$-equivariant map
$$
\phi \colon S\times T_{\C} {\rightarrow} G_{\C}
$$
which is a biholomorphism onto a $T_{\C}$-invariant open neighbourhood of $p$ in $G_{\C}$. From (\ref{psitau}) we have also a diffeomorphism
$$
\psi_\tau \circ \alpha_h \colon T^*G {\rightarrow} G_{\C}.
$$
Let $\beta = (\psi_\tau \circ \alpha_h)^{-1}\circ \phi$ and let $\tilde \omega = \beta^*\omega$ be pull-back the standard symplectic structure on $T^*G$ to $S\times T_{\C}$ which, by Proposition \ref{cstau} becomes a K\"ahler manifold. 
The standard right action of $T$ on $T^*G$,
$$
t\cdot (x,y) = (xt, \Ad_{t^{-1}}y), \,\,\,\, t\in T,
$$
is Hamiltonian with moment map $\mu(x,y) = y^{\frak t}$, where $y^{\frak t}$ is the component of $y\in \frak g$ in the decomposition $\frak g = \frak t \oplus \frak t^{\perp}$. The map $\psi_\tau\circ \alpha_h$, by $\Ad$-invariance of $h$, intertwines this action with the standard right action of $T$ on $G_{\C}$. 
Therefore, we see that the right $T$-action on $S\times T_{\C}$ is Hamiltonian with moment map $\tilde \mu = \mu \circ \beta.$
We have $f = \frac12 \langle y^{\frak t}, F y^{\frak t}\rangle = \frac12 \langle \mu, F \mu\rangle$. 

Let now $(u,z)$ be local coordinates on $S\times T_{\C}$, where $z = e^{a+i\theta}$ are standard holomorphic coordinates on $T_{\C}$, with $\theta$ the standard angular coordinate on $T$. Since the $T$-action on $S\times T_{\C}$ is the standard one,  
the Hamiltonian flow of $\tilde f = f \circ \beta = \frac12 \langle \tilde \mu, F \tilde \mu\rangle$ at time $s\in \R$ is then given by (see Proposition 
\ref{partialpullback})
$$
\bigl(u,e^{\sum_{j=1}^r(\theta_j+ia_j)T_j}\bigr) \mapsto \biggl(u, e^{\sum_{j=1}^r \bigl(\theta_j + s \frac{\partial \tilde f}{\partial \tilde \mu_j} +ia_j\bigr)T_j}\biggr).
$$ 
Its analytic continuation to imaginary time $\sigma = \sigma_1 + i\sigma_2, \sigma_i \in {\R}$, which preserves the $T_{\C}$-orbits, as in \cite{MN}, is then 
given by 
$$
\bigl(u,e^{\sum_{j=1}^r (\theta_j+ia_j)T_j}\bigr) \mapsto \bigl(u, e^{\sum_{j=1}^r (\theta_j + \sigma F\tilde \mu_j +ia_j)T_j}\bigr) = 
\bigl(u, e^{\sum_{j=1}^r (\theta_j +  \sigma_1 F\tilde \mu_j + i(a_j+\sigma_2 F\tilde\mu_j) )T_j}\bigr).
$$

We will now use some of the results of \cite{BG}. We will take $U\subset S$ to be a convex subset of some coordinate chart. From Section 2 in \cite{BG}, it follows that, on $U\times T_{\C}$,
$$
\tilde \omega = i \partial \bar\partial \rho,
$$  
where $\rho = \rho(u,a)$ is a strictly pluri-subharmonic function. Moreover, one then has the Legendre transform
$$
\tilde \mu = L_\rho(u,a) = \frac12 \frac{\partial \rho}{\partial a}.
$$
The inverse Legendre transform is given by a ``partial symplectic potential''
$$
g(u,\tilde \mu) = \tilde \mu a -\frac12\rho,
$$
with
$$
a = L_g(u,\tilde \mu)=\frac{\partial g}{\partial \tilde \mu}.
$$

This implies that the composition of Legendre transforms
$$
a \mapsto L_{g+\sigma_2 \tilde f}\circ L_g^{-1}(u,a)= a+\sigma_2 F \tilde \mu (u,a)
$$
is a diffeomorphism from $i\frak t \to i\frak t \cong {\R}^r$ (see also \cite{HS,F}). Indeed, explicitly,
$$
a+\sigma_2 F \tilde \mu (u,a) =  a'+\sigma_2 F \tilde \mu (u,a')
$$
implies
\begin{IEEEeqnarray*}{rCls}
    a'-a & = & -\sigma_2 F (\tilde \mu (u,a') -\tilde\mu(u,a)) \\
    & = & -\frac{\sigma_2}{2} F \left(\int_0^1 \frac{\partial^2 \rho}{\partial a^2}(a+t(a'-a))dt\right) \cdot (a'-a). 
\end{IEEEeqnarray*}
By taking the inner product with $(a'-a)$ we see that, given that $\rho$ is strictly pluri-subharmonic, for $\sigma_2>0$ and $F$ 
positive definite this implies $a=a'$.
Hence, the analytic continuation of the Hamiltonian flow of $\tilde f$ is a diffeomorphism of $S\times T_{\C}$; since it preserves $T_{\C}$ orbits we conclude that 
it corresponds to a global diffeomorphism of $G_{\C}$. On the other hand, by looking at the formalism of \cite{MN} and at the action of $A_{\tau,\sigma}$ we see 
that this is equivalent to the statement that $A_{\tau,\sigma}$ is a global diffeomorphism for $\sigma_2>0$ and $\tau_2\neq 0$.
\end{proof}

\begin{remark}Note that the proof of Theorem \ref{sure} works if $f$ is replaced by any strictly convex function of $\mu$ so that the Theorem generalizes to that more general situation.
\end{remark}

\section{New polarizations of $T^*G$ from $G\times T$-invariant Hamiltonian flow}
\label{newpol}

\subsection{The polarizations ${\mathcal P}_{\tau,\sigma}$ and $G\times T$-invariant K\"ahler structures}

Recall that a polarization of $T^*G$, in the sense of geometric quantization, is an involutive Lagrangian distribution ${\mathcal P}$ in the complexified tangent bundle $T(T^*G) \otimes \C.$

Recall, also, from Section \ref{sectionquadratic} that we have two commuting Hamiltonian flows $\phi_{X_h}^t$ and $\phi_{X_f}^s$, for $t,s\in \R.$
The Hamiltonian flow $\phi_{X_h}^t$, analytically continued to complex time $\tau\in \C^+$, acting by push-forward 
on the vertical polarization ${\mathcal P}_{0,0}$, produces the $G\times G$-invariant K\"ahler structures of proposition \ref{cstau}. We will now act with both Hamiltonian flows in imaginary time to define new $G\times T$-invariant K\"ahler structures on $T^*G$. 

Recall that the vertical polarization ${\mathcal P}_{0,0}= {\rm Ker}\, D\pi$ is spanned by the Hamiltonian vector fields of Hamiltonian functions of the form 
$\pi^*f,\, f\in C^\infty(G)$, where $\pi\colon T^*G \to G$ is the canonical projection. To study the push-forward of ${\mathcal P}_{0,0}$ under Hamiltonian flow it is then useful to recall the following. Under the Hamiltonian flow of $X_g, g\in C^\infty(T^*G)$, one has
that the evolution of the Hamiltonian vector field of $f\in C^\infty(T^*G)$ is given by
$$
X_{(\phi^t_{X_g})^* f} = (\phi^{-t}_{X_g})_*\, X_f. 
$$
When the Hamiltonian flow $\phi^t_{X_g}$ is real analytic in $t$ one has, moreover, within the convergence regions for the power series, the corresponding formulas
$$
X_{e^{t X_g}f} = e^{t {\mathcal L}_{X_g}} X_f,
$$  
where ${\mathcal L}_{X_g}$ denotes the Lie derivative along $X_g.$ (See, for example, \cite{HK, KMN1, MN, P}.)

\begin{proposition}\label{polsts}
Let ${\mathcal P}_{t,s}$ be the real polarization of $T^*G$ obtained by push-forward of ${\mathcal P}_{0,0}$ by the combined Hamiltonian flow $\phi_{X_f}^{-s} \circ \phi_{X_h}^{-t}$, for $t,s \in \R$. 
$$
{\mathcal P}_{t,s} = \left( \phi_{X_f}^{-s} \circ \phi_{X_h}^{-t}\right)_* {\mathcal P}_{0,0}, \quad t,s\in \R.
$$
This polarization can described by sections analytic in $t$ and $s$ as follows
\begin{equation*}
{\mathcal P}_{t,s} = \spn_{\C} \biggl\{ e^{t {\mathcal L}_{X_h}}\circ e^{s {\mathcal L}_{X_f}} \frac{\partial}{\partial y^j} \biggm| j=1, \dots, n \biggr\}.
\end{equation*}
Explicitly, we have
\begin{IEEEeqnarray}{rCl}
		\nonumber \mathcal{P}_{t, s}|_{(x,y)} &=& \left\{ \left( \bigg[\frac{1 - e^{t \ad_{u_h(y)}}}{\ad_{u_h(y)}} H_h(y) e^{s \, \ad_{Fy}} - s F \bigg]A, \bigg[ e^{s \, \ad_{Fy}} - s \, \ad_y \circ F \bigg] A \right)  \Bigg|  A \in \g_{\C} \right\}  \\
		\label{eq:ptausigmaz} &=& \left\{ \left( \bigg[\frac{1 - e^{t \ad_{u_h(y)}}}{\ad_{u_h(y)}} H_h(y) -s  e^{t \ad_{u_h(y)}} F \bigg]A, A \right)  \Bigg|  A \in \g_{\C}  \right\}. 
	\end{IEEEeqnarray}
\end{proposition}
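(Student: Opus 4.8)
The plan is to realise $\mathcal{P}_{t,s}$ as the pushforward of the vertical distribution by the composed flow, read off the relevant differential from Lemma~\ref{composederivatives} to obtain the first description in~(\ref{eq:ptausigmaz}), and then reparametrise using Lemma~\ref{invertible} together with~(\ref{commutes}) and the defining properties of $F$ to obtain the second.

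First, since $\phi_{X_h}^{-t}$ and $\phi_{X_f}^{-s}$ are flows of Hamiltonian vector fields they are symplectomorphisms, and $\mathcal{P}_{0,0}=\ker D\pi$ is the complexification of the involutive real Lagrangian vertical distribution; hence $\mathcal{P}_{t,s}=(\phi_{X_f}^{-s}\circ\phi_{X_h}^{-t})_*\mathcal{P}_{0,0}$ is again an involutive Lagrangian distribution, that is, a (real) polarization. As the two flows commute, $\Phi:=\phi_{X_f}^{-s}\circ\phi_{X_h}^{-t}=\phi_{X_h}^{-t}\circ\phi_{X_f}^{-s}$ with $\Phi^{-1}=\phi_{X_h}^{t}\circ\phi_{X_f}^{s}$, and $\mathcal{P}_{t,s}|_{(x,y)}=D\Phi_{\Phi^{-1}(x,y)}\bigl(\mathcal{P}_{0,0}|_{\Phi^{-1}(x,y)}\bigr)$. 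In the splitting~(\ref{tangentspace}) the vertical subspace equals $\{(0,A):A\in\g_{\C}\}$ at every point, while $D\Phi_{\Phi^{-1}(x,y)}$ is exactly the matrix of Lemma~\ref{composederivatives}; applying that matrix to $(0,A)$ yields precisely the first of the two sets displayed in~(\ref{eq:ptausigmaz}). Since the matrix entries are entire in $t$ and $s$, the spanning vector fields depend analytically on $t,s$, and by the discussion preceding the Proposition they are $e^{t\mathcal{L}_{X_h}}\circ e^{s\mathcal{L}_{X_f}}\,\partial/\partial y^{j}$.

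For the second description, Lemma~\ref{invertible} shows that the operator $e^{s\,\ad_{Fy}}-s\,\ad_y\circ F$ appearing in the lower component is an automorphism of $\g_{\C}$; substituting for $A$ its image under the inverse of that operator leaves $\mathcal{P}_{t,s}|_{(x,y)}$ unchanged and turns the lower component into $A$. It then suffices to verify the operator identity
\[
\frac{\id - e^{t\ad_{u_h(y)}}}{\ad_{u_h(y)}}\,H_h(y)\,e^{s\,\ad_{Fy}} - sF
= \Bigl(\frac{\id - e^{t\ad_{u_h(y)}}}{\ad_{u_h(y)}}\,H_h(y) - s\,e^{t\ad_{u_h(y)}}F\Bigr)\bigl(e^{s\,\ad_{Fy}} - s\,\ad_y\circ F\bigr).
\]
Expanding the right-hand side, this reduces to the two facts $F\circ e^{s\,\ad_{Fy}}=F$ and $F\circ\ad_y\circ F=0$ --- both immediate from $\image F\subset\ttt$, $F|_{\ttt^\perp}=0$ and the fact that $\ad_A$ maps $\g$ into $\ttt^\perp$ whenever $A\in\ttt$ --- together with $\frac{\id - e^{t\ad_{u_h(y)}}}{\ad_{u_h(y)}}\,H_h(y)\,\ad_y=\id - e^{t\ad_{u_h(y)}}$, which comes from $H_h(y)\,\ad_y=\ad_{u_h(y)}$ in~(\ref{commutes}) and the cancellation of the power series $\frac{\id - e^{t\ad_{u_h(y)}}}{\ad_{u_h(y)}}$ against $\ad_{u_h(y)}$. (One takes $s\neq 0$; the case $s=0$ is trivial.) The main point requiring care is this last identity: one must keep $H_h(y)$ on the correct side and use that $F$ annihilates the entire ranges of $\ad_{Fy}$ and of $\ad_y\circ F$, so that the $s$-dependent contributions from $\phi_{X_f}$ cancel and leave exactly the $t$-dependent operator of the second description. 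Everything else is routine matrix bookkeeping, and I do not anticipate a genuine obstacle.
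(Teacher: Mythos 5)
Your proof is correct and follows essentially the same route as the paper's: both read off the pushforward of the vertical subspace $\{(0,A)\}$ from Lemma \ref{composederivatives}, then use Lemma \ref{invertible} together with the operator identity (which rests on $F \circ e^{s\,\ad_{Fy}} = F$, $F\circ \ad_y\circ F=0$ and (\ref{commutes})) to pass to the second description. The only difference is cosmetic: the paper justifies the identification of the spanning sections with $e^{t {\mathcal L}_{X_h}}\circ e^{s {\mathcal L}_{X_f}} \frac{\partial}{\partial y^j}$ by the explicit computation in Lemma \ref{lema2apen} of the Appendix, whereas you invoke analyticity in $t,s$ and the general Lie-series formula from the discussion preceding the Proposition, which amounts to the same verification.
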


\begin{proof}
	Equation (\ref{eq:ptausigmaz}) follows immediately from the definition of $\mathcal{P}_{t, s}$, Lemma \ref{composederivatives}, Lemma \ref{invertible} and from noting that
	\begin{equation*}
		\left( \frac{1 - e^{t \ad_{u_h(y)}}}{\ad_{u_h(y)}} H_h(y) -s e^{t \ad_{u_h(y)}} F \right) \left( e^{s \, \ad_{Fy}} -s  \ad_y \circ F \right) = \frac{1 - 
		e^{t \ad_{u_h(y)}}}{\ad_{u_h(y)}} H_h(y) e^{s \, \ad_{Fy}} -s  F, 
	\end{equation*}
	where one uses (\ref{commutes}) and the fact that 
	$$
	F \circ e^{s \, \ad_{Fy}} = F,\,\,\, F \circ \ad_y \circ F =0.
	$$
	The claim that ${\mathcal P}_{t,s}$ can be obtained by exponentiating the Lie derivatives along $X_h$ and $X_f$ follows from Lemma 
	\ref{lema2apen} in the Appendix. 
\end{proof}

Proposition \ref{polsts} has the immediate

\begin{corollary}The two real-parameter family of polarizations $\{{\mathcal P}_{t,s}\}_{t,s\in {\R}}$ 
extends to a two-complex parameter family of polarizations\footnote{Recall that, from \cite{MN} (see also \cite{KMN} for the case $\sigma=0$) holomorphic vector fields for the complex structures generated in this process are obtained by analytic continuation $t\to \bar\tau, s\to \bar\sigma$.} $\{{\mathcal P}_{\tau,\sigma}\}_{\tau,\sigma\in {\C}}$ by (unique) analytic continuation, 
in $t$ to $\bar\tau = \tau_1-i\tau_2 \in {\C}$ and in $s$ to $\bar\sigma = \sigma_1-i\sigma_2\in {\C}$, with $\tau_1, \tau_2, \sigma_1, \sigma_2\in {\R}$, pointwise along $T(T^*G)\otimes{\C}$. Explicitly,
\begin{IEEEeqnarray}{rCl}
		 \nonumber\mathcal{P}_{\tau, \sigma}|_{(x,y)} & = & \left\{ \left( \bigg[\frac{1 - e^{\bar\tau \ad_{u_h(y)}}}{\ad_{u_h(y)}} H_h(y) e^{\bar \sigma \ad_{Fy}} - \bar\sigma F \bigg]A, \bigg[ e^{\bar\sigma \ad_{Fy}} - \bar\sigma  \ad_y \circ F \bigg] A \right)  \Bigg| A \in \g_{\C} \right\} \\
		\nonumber & = & \left\{ \left( \bigg[\frac{1 - e^{\bar\tau \ad_{u_h(y)}}}{\ad_{u_h(y)}} H_h(y) -\bar\sigma  e^{\bar\tau \ad_{u_h(y)}} F \bigg]A, A \right) \Bigg| A \in \g_{\C}  \right\}. 
	\end{IEEEeqnarray}
\end{corollary}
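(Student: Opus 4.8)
The plan is to exploit the real-analyticity, in the flow parameters $t$ and $s$, of the family $\{{\mathcal P}_{t,s}\}_{t,s\in\R}$ as given in Proposition \ref{polsts}, together with the general principle recalled just before that Proposition that exponentiated Lie derivatives along $X_h$, $X_f$ reproduce the push-forward of the vertical distribution. Concretely, one reads off from (\ref{eq:ptausigmaz}) that a spanning frame for ${\mathcal P}_{t,s}$ at $(x,y)$ is obtained by applying the $\g_{\C}$-linear map
\begin{equation*}
M(t,s;y) \;=\; \frac{1 - e^{t\,\ad_{u_h(y)}}}{\ad_{u_h(y)}}\,H_h(y)\;-\;s\,e^{t\,\ad_{u_h(y)}}\,F
\end{equation*}
to the constant vectors $A\in\g_{\C}$ in the first slot, with $A$ itself in the second slot. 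The entries of $M(t,s;y)$ are, for each fixed $y$, entire functions of $(t,s)$ (the ratio $\frac{1-e^{t\,\ad_{u_h(y)}}}{\ad_{u_h(y)}}$ is the usual entire function $\frac{1-e^{tz}}{z}$ evaluated on the operator $\ad_{u_h(y)}$; note $[y,u_h(y)]=0$ from (\ref{commutes}) guarantees the relevant operators here commute suitably). Hence the first step is simply to observe that the right-hand side of (\ref{eq:ptausigmaz}), read as a subbundle of $T(T^*G)\otimes\C$, is given by a frame depending real-analytically — indeed entire-analytically — on $(t,s)$.

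The second step is the analytic continuation itself. Since each matrix entry of $M(t,s;y)$ extends uniquely to an entire function of two complex variables, we may substitute $t\mapsto\bar\tau$, $s\mapsto\bar\sigma$; this produces a well-defined complex $n$-dimensional subspace ${\mathcal P}_{\tau,\sigma}|_{(x,y)}\subset T_{(x,y)}(T^*G)\otimes\C$ at every point, depending smoothly on $(x,y)$ and holomorphically on $(\tau,\sigma)$, and agreeing with ${\mathcal P}_{t,s}$ when $\tau=t$, $\sigma=s$ are real. The choice of $\bar\tau,\bar\sigma$ rather than $\tau,\sigma$ is dictated by the convention recalled in the footnote: the complex structures built this way have their $(1,0)$- (holomorphic) directions obtained by continuing $t\to\bar\tau$, $s\to\bar\sigma$, consistent with \cite{MN} and, for $\sigma=0$, with Proposition \ref{cstau} — one should check here that $\mathcal{P}_{\tau,0}$ indeed matches the polarization $J_{\tau,0}$ of that Proposition, which is a short computation using $\psi_\tau\circ\alpha_h$ and equation (\ref{eq:dflowXh}).

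The third step is to verify that the continued distribution ${\mathcal P}_{\tau,\sigma}$ is still a polarization, i.e.\ Lagrangian and involutive. For Lagrangianness: the conditions $\omega(V,W)=0$ for $V,W$ in the frame are, via (\ref{eq:omega2}), polynomial (in fact quadratic) identities in the entries of $M$ and in $y$; they hold for all real $(t,s)$, hence by the identity theorem for holomorphic functions they persist after continuation to $(\bar\tau,\bar\sigma)$. The same argument — holomorphic identities that vanish on the totally real locus $\{(t,s)\in\R^2\}$ must vanish identically — handles involutivity, provided one phrases involutivity as the vanishing of appropriate structure functions; alternatively, and more cleanly, one invokes the Appendix lemma (Lemma \ref{lema2apen}) already cited in Proposition \ref{polsts}, which presents ${\mathcal P}_{t,s}$ as $\spn_\C\{e^{t{\mathcal L}_{X_h}}e^{s{\mathcal L}_{X_f}}\partial/\partial y^j\}$, so that the continued frame is $\spn_\C\{e^{\bar\tau{\mathcal L}_{X_h}}e^{\bar\sigma{\mathcal L}_{X_f}}\partial/\partial y^j\}$ and involutivity is inherited from $[\partial/\partial y^j,\partial/\partial y^k]=0$ together with the fact that conjugation by a (complex-time) flow preserves the Lie bracket at the formal/analytic level. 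Finally, uniqueness of the continuation is automatic: two holomorphic extensions of the same real-analytic frame agree on a totally real submanifold of maximal dimension, hence everywhere. The main obstacle, I expect, is the third step — precisely, making rigorous the claim that involutivity is preserved under complex-time flow, since the vector fields $e^{\bar\tau{\mathcal L}_{X_h}}e^{\bar\sigma{\mathcal L}_{X_f}}\partial/\partial y^j$ are not genuine flows of real vector fields; this is why the cleanest route is to reduce everything to the explicit formula (\ref{eq:ptausigmaz}) and argue by the identity theorem applied to the (finitely many) polynomial Lagrangian/involutivity conditions, rather than trying to manipulate the complexified flows directly.
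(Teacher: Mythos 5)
Your proposal is correct and follows essentially the same route as the paper, which states this corollary as an immediate consequence of the explicit frame in Proposition \ref{polsts}: the entries of the spanning matrices are entire in $(t,s)$, so one continues pointwise to $(\bar\tau,\bar\sigma)$ with uniqueness guaranteed by the identity theorem. Your third step (persistence of the Lagrangian and involutivity conditions under continuation) is sound and supplies detail the paper leaves implicit here, deferring the compatibility-with-$\omega$ verification to Theorem \ref{ohyes}.
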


We will now establish that this family of $G\times T$-invariant complex polarizations of $T^*G$, obtained by analytic continuation of the above Hamiltonian flows to imaginary time, contains, in fact, a family of K\"ahler polarizations. 

\begin{theorem}\label{super}The polarizations ${\mathcal P}_{\tau,\sigma}$, for $\tau\in {\C}^+, \sigma \in {\C}^+\cup {\R}$, are obtained by pull-back via $A_{\tau,s}$, $s\in \R$, of the holomorphic tangent space of $G_{\C}$ with respect to the standard complex structure, $T^{(1,0)}G_{\C}$, followed by analytic continuation in $s$ to $\bar\sigma$. 
\end{theorem}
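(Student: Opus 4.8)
The plan is to reduce the statement for $\mathcal{P}_{\tau,\sigma}$ with $\sigma\in\mathbb{C}^+\cup\mathbb{R}$ to the already-established case $\sigma\in\mathbb{R}$ by the same analytic-continuation mechanism that is used throughout this circle of ideas (cf.\ \cite{MN,KMN1}). First I would recall that for $\sigma = s \in \mathbb{R}$ the map $A_{\tau,s} = \psi_\tau\circ\alpha_h\circ\phi^s_{X_f}$ is a global diffeomorphism (this was stated just before Theorem \ref{sure}), and that $\mathcal{P}_{0,0} = \ker D\pi$ is the vertical polarization, which is exactly the pull-back by $\psi_\tau\circ\alpha_h$ followed by \emph{anti}-holomorphic/holomorphic bookkeeping of $T^{(1,0)}G_\C$ only in the degenerate sense; more precisely, by Proposition \ref{cstau} the complex structure $J_{\tau,0}$ is the pull-back of the canonical complex structure on $G_\C$ via $\psi_\tau\circ\alpha_h$, so $T^{(1,0)}_{J_{\tau,0}}(T^*G) = (\psi_\tau\circ\alpha_h)^*T^{(1,0)}G_\C$. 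Then, since $\phi^s_{X_f}$ is a (real) symplectomorphism, pushing forward the vertical polarization by $\phi^{-s}_{X_f}\circ\phi^{-t}_{X_h}$ and then analytically continuing $t\to\bar\tau$ produces, by construction of $\mathcal{P}_{t,s}$ in Proposition \ref{polsts}, precisely $\big(A_{\tau,s}\big)^*T^{(1,0)}G_\C$ for real $s$; this is the base case of the claim.

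Next I would carry out the analytic continuation in the second slot. The key point, which I would make explicit, is that the formula for $\mathcal{P}_{\tau,\sigma}|_{(x,y)}$ given in the Corollary preceding the theorem is manifestly the analytic continuation in $s\to\bar\sigma$ of the real-$s$ distribution $\mathcal{P}_{\tau,s}|_{(x,y)}$ — this was how $\mathcal{P}_{\tau,\sigma}$ was \emph{defined}. So it suffices to check that the right-hand side of the claimed identity, namely the pull-back $\big(A_{\tau,s}\big)^*T^{(1,0)}G_\C$ followed by analytic continuation $s\to\bar\sigma$, is well-defined (i.e.\ the continuation exists and is unique) and agrees with $\mathcal{P}_{\tau,\sigma}$ on the overlap $s=\sigma\in\mathbb{R}$. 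Uniqueness of analytic continuation along $T(T^*G)\otimes\mathbb{C}$ pointwise is immediate once existence is known, because two real-analytic families of subspaces of a fixed finite-dimensional complex vector space that agree for real $s$ must agree wherever both are defined. For existence one uses Theorem \ref{sure}: for $\tau\in\mathbb{C}^+$ and $\sigma\in\mathbb{C}^+$ (and trivially for $\sigma\in\mathbb{R}$) the map $A_{\tau,\sigma}$ is itself a global diffeomorphism, hence $\big(A_{\tau,\sigma}\big)^*T^{(1,0)}G_\C$ is a genuine (Kähler) polarization, and the explicit coordinate expression in Proposition \ref{polsts} / the Corollary shows it depends holomorphically on $\bar\sigma$ in $\mathbb{C}^+\cup\mathbb{R}$.

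I would then assemble the argument: for $\sigma=s$ real, $\mathcal{P}_{\tau,s}=\big(A_{\tau,s}\big)^*T^{(1,0)}G_\C$ by the base case; both sides extend analytically in $s\to\bar\sigma$ to $\sigma\in\mathbb{C}^+$, the left side by the very definition of $\mathcal{P}_{\tau,\sigma}$ and the right side because $A_{\tau,\sigma}$ stays a diffeomorphism (Theorem \ref{sure}) and the pull-back distribution is given by the same analytic formula; by uniqueness of analytic continuation the two extensions coincide on $\mathbb{C}^+$. Concretely, one can verify directly from the definition $A_{\tau,\sigma}(x,y)=xe^{\tau u_h(y)}e^{\sigma Fy}$ that $DA_{\tau,\sigma}$ sends the vertical vectors $(0,A)$, $A\in\g_\C$, to the span giving the second displayed line of the Corollary — this is the same computation as in Lemma \ref{derivatives}, now with $\tau,\sigma$ complex — and that $T^{(1,0)}G_\C$ pulls back to the kernel of $\partial\bar z$, which under $A_{\tau,\sigma}$ is exactly the $(1,0)$-part, i.e.\ $\mathcal{P}_{\tau,\sigma}$.

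The main obstacle is the bookkeeping of \emph{which} analytic continuation is meant and making the "pull-back followed by analytic continuation in $s$ to $\bar\sigma$" statement precise: one must be careful that $A_{\tau,s}^*T^{(1,0)}G_\C$, viewed as a subspace of $T_{(x,y)}(T^*G)\otimes\mathbb{C}$ via the formula, really is real-analytic in $s$ with an extension to complex $s$, and that this extension is the object called $\mathcal{P}_{\tau,\sigma}$ rather than, say, its conjugate (hence the appearance of $\bar\sigma$ rather than $\sigma$ in the formulas — this sign/conjugation convention, inherited from \cite{MN}, is exactly where errors creep in). Once the convention is pinned down, everything else is a direct comparison of the two explicit formulas in Proposition \ref{polsts} and the Corollary against the differential of $A_{\tau,\sigma}$ computed exactly as in Lemma \ref{derivatives}, and an appeal to Theorem \ref{sure} to guarantee $A_{\tau,\sigma}$ is a diffeomorphism so that the pull-back is a bona fide Kähler polarization.
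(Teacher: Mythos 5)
Your proposal is correct and follows essentially the same route as the paper: establish the identity $\mathcal{P}_{\tau,s}=\bigl(A_{\tau,s}\bigr)^{*}T^{(1,0)}G_{\C}$ for real $s$ by comparing the explicit formula of Proposition \ref{polsts} with the differential of the flow (the paper does this via Lemma \ref{composederivatives} and equality (\ref{adhessian})), and then invoke unique pointwise analytic continuation in $s\to\bar\sigma$. The only cosmetic difference is that you lean on Theorem \ref{sure} for existence of the continuation, whereas the paper gets it for free from the explicit frame being entire in $s$.
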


\begin{proof}Recall that, for $s\in {\R}$, $A_{\tau,s}=\psi_\tau \circ \alpha_h \circ \phi^s_{X_f}$. Also, from \cite{KMN}, by setting $\sigma=0$ in (\ref{eq:ptausigmaz}), we obtain, for $\tau\in {\C}^+$,
$$
{\mathcal P}_{\tau,0} = (\psi_\tau \circ \alpha_h)^* T^{(1,0)}G_{\C}.
$$ 
Therefore, to prove the theorem it is enough to show that, for $s\in {\R}$,
\begin{equation}\label{enough}
{\mathcal P}_{\tau,s}\vert_{(x,y)} = D\phi^{-s}_{X_f} \Big( {\mathcal P}_{\tau,0}\vert_{\phi^s_{X_f}(x,y)} \Big),
\end{equation}
and then to take the (unique) analytic continuation in $s$ to $\bar \sigma$. To prove (\ref{enough}), just use (\ref{eq:ptausigmaz}) with $s=0$, Lemma 
\ref{composederivatives}, equality (\ref{adhessian}) and the well-known identities
\begin{equation*}
    \ad_{\Ad_gA} = \Ad_g \circ \ad_A \circ \Ad_{g^{-1}},\quad \Ad_{\Ad_g \tilde g}= \Ad_g \circ \Ad_{\tilde g} \circ \Ad_{g^{-1}},\quad g,\tilde g\in G, A\in \g. \qedhere
\end{equation*}
\end{proof}

Let $J_{\tau,\sigma}$ be the ($G\times T$-invariant) complex structure on $T^*G$ defined by the pull-back by $A_{\tau,\sigma}$ of the standard complex structure on $G_{\C}$, so that 
$$
{\mathcal P}_{\tau,\sigma} = T^{(1,0)}(T^*G, J_{\tau,\sigma}).
$$

\begin{theorem}\label{ohyes}For $\tau\in {\C}^+, \sigma\in {\C}^+\cup {\R}$, $(T^*G,\omega,J_{\tau,\sigma})$ is K\"ahler. A (global) K\"ahler potential is given by
\begin{equation}\label{kahlerpot}
\kappa_{\tau,\sigma} (x,y) = 2\tau_2 \left(\langle y,u_h(y)\rangle -h(y)\right) +2 \sigma_2 f.
\end{equation}
\end{theorem}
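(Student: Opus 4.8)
The plan is to verify that $\kappa_{\tau,\sigma}$ is a global K\"ahler potential for the complex structure $J_{\tau,\sigma}$, i.e. that $\omega = 2i\,\partial_{\tau,\sigma}\bar\partial_{\tau,\sigma}\kappa_{\tau,\sigma}$ (with the sign/normalization conventions already fixed by Proposition \ref{cstau}), and that $\kappa_{\tau,\sigma}$ is strictly plurisubharmonic with respect to $J_{\tau,\sigma}$; the latter is what upgrades the almost-complex data to an honest K\"ahler structure. Since $\omega$ is the fixed symplectic form and $J_{\tau,\sigma}$ is integrable (it is the pull-back by the diffeomorphism $A_{\tau,\sigma}$ of the standard complex structure on $G_{\C}$, by Theorem \ref{sure} and Theorem \ref{super}), it suffices to check that $g_{\tau,\sigma}(\cdot,\cdot) := \omega(\cdot, J_{\tau,\sigma}\cdot)$ is symmetric and positive-definite, and that $\omega$ is the curvature form associated to $\kappa_{\tau,\sigma}$.

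First I would treat the two parameters additively. The key structural input is that the flows $\phi^t_{X_h}$ and $\phi^s_{X_f}$ commute (shown in the excerpt) and that $f$ Poisson-commutes with $h$; moreover $f = \tfrac12\langle\mu,F\mu\rangle$ is built from the moment map $\mu(x,y) = y^{\mathfrak t}$ of the right $T$-action, exactly the structure exploited in the proof of Theorem \ref{sure}. The cleanest route is to use the holomorphic-slice decomposition $G_{\C} \cong S\times T_{\C}$ from that proof: pulling back by $A_{\tau,\sigma}$, the $h$-part of the potential, $2\tau_2(\langle y,u_h(y)\rangle - h(y))$, is precisely $\kappa_{\tau,0}$ from Proposition \ref{cstau}, which is already known to be a K\"ahler potential for $J_{\tau,0}$; and the additional $\sigma$-flow acts only along the $T_{\C}$-fibers, where it is the standard imaginary-time translation treated in \cite{MN,BG}. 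On such a product, imaginary-time Hamiltonian flow of a function of the torus moment map changes the K\"ahler potential by adding $2\sigma_2 \tilde f = 2\sigma_2 f$, because on $T_{\C}$ with coordinate $z = e^{a+i\theta}$ one has $i\partial\bar\partial(\text{function of }a)$ and the flow shifts $a \mapsto a + \sigma_2 F\tilde\mu$; the Legendre-transform computation at the end of the proof of Theorem \ref{sure} already packages exactly this. So I would argue: $\kappa_{\tau,\sigma} = \kappa_{\tau,0} + 2\sigma_2 f$, and since the $\sigma$-flow preserves $T_{\C}$-orbits and acts trivially on the $S$-factor, the mixed derivatives cause no trouble and $2i\partial_{\tau,\sigma}\bar\partial_{\tau,\sigma}\kappa_{\tau,\sigma} = \omega$ follows by combining the known $\tau$-statement on $S$ with the torus computation on the fibers.

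Alternatively, and perhaps more self-contained for the paper's style, I would verify the potential by a direct computation in the global frame: using the explicit description (\ref{eq:ptausigmaz}) of ${\mathcal P}_{\tau,\sigma}$, one has an explicit $\g_{\C}$-parametrization of $(1,0)$-vectors, namely $Z_A = ([\,\cdots\,]A,\ A)$ with the bracketed operator $M_{\tau,\sigma}(y) = \tfrac{1-e^{\bar\tau\ad_{u_h(y)}}}{\ad_{u_h(y)}}H_h(y) - \bar\sigma e^{\bar\tau\ad_{u_h(y)}}F$. Then I would compute $\partial\bar\partial\kappa_{\tau,\sigma}$ by contracting $\mathrm{d}\kappa_{\tau,\sigma}$ against these frame vectors and their conjugates, using (\ref{commutes}), the relations $F\circ e^{s\ad_{Fy}} = F$, $F\circ\ad_y\circ F = 0$, and $\langle y,u_h(y)\rangle - h(y)$ being the Legendre transform of $h$ (so its gradient in $y$ is essentially $\ad$-adapted to $u_h$), and compare with the pairing of $\omega$ (from (\ref{eq:omega2})) against the same frame. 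This is the more computational path; its virtue is that it simultaneously exhibits the metric $g_{\tau,\sigma}$ and lets one read off positivity.

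The main obstacle, in either approach, is establishing strict positivity of the Hermitian form — equivalently, that $\kappa_{\tau,\sigma}$ is strictly plurisubharmonic — uniformly on all of $T^*G$, including the degenerate directions where $\ad_{u_h(y)}$ or $\ad_y$ has kernel and where the $\mathfrak t$ versus $\mathfrak t^\perp$ splitting matters. Here I would use: on $\mathfrak t^\perp$-type directions the $h$-contribution already gives positivity by hypotheses (ii)–(iii) on $H_h$ together with Proposition \ref{cstau} (which asserts K\"ahlericity of $J_{\tau,0}$ and hence positivity of the $\tau_2$-term), while on the $\mathfrak t$-directions the extra term $2\sigma_2 f$ contributes $\sigma_2\langle\cdot,F\cdot\rangle > 0$ since $F|_{\mathfrak t} > 0$ and $\sigma_2 > 0$; the cross-terms vanish because $F$ annihilates $\mathfrak t^\perp$ and $\ad_y F$ lands in $\mathfrak t^\perp$, decoupling the two blocks. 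The boundary case $\sigma \in \R$ (so $\sigma_2 = 0$) is then just Proposition \ref{cstau} reparametrized by the real diffeomorphism $\phi^\sigma_{X_f}$, hence already covered. Assembling these observations, with the slice decomposition used only to organize the bookkeeping, gives both the potential identity and positivity, completing the proof.
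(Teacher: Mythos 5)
Your overall plan (verify the potential identity against an explicit frame for $\mathcal{P}_{\tau,\sigma}$, then prove positivity of the Hermitian form) is the same as the paper's, and the potential computation you sketch --- checking that $\theta-d\bar\lambda_{\tau,\sigma}$ annihilates the frame, or invoking Theorem 4.1 of \cite{MN} --- is exactly what the paper does. The genuine gap is in your positivity argument. You claim the Hermitian form decouples into a $\frak t$-block (made positive by $2\sigma_2 f$) and a $\frak t^\perp$-block (made positive by the $h$-term), with vanishing cross-terms ``because $F$ annihilates $\frak t^\perp$ and $\ad_y\circ F$ lands in $\frak t^\perp$.'' That reasoning only shows that the $f$-contribution $2\sigma_2F$ is block-diagonal. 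The $h$-contribution to the form is $e^{-\bar\sigma\ad_{Fy}}\,C\,e^{\sigma\ad_{Fy}}$ with $C=i\frac{1-e^{2i\tau_2\ad_{u_h(y)}}}{\ad_{u_h(y)}}H_h(y)$, and for generic $y\notin\frak t$ the operator $\ad_{u_h(y)}$ does not preserve the splitting $\g=\frak t\oplus\frak t^\perp$ (already for $G=SU(2)$ with $y\in\frak t^\perp$), so this term has nonvanishing $\frak t$--$\frak t^\perp$ cross-terms and the two blocks do not decouple. Relying on the $f$-term for positivity in the $\frak t$-directions is also the wrong mechanism: it is unnecessary, and it would in any case fail at $\sigma_2=0$, a case the theorem covers.

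The correct argument, which is the paper's, needs no decoupling: one computes $W^{\tau,\sigma}=e^{-\bar\sigma\ad_{Fy}}\,C\,e^{\sigma\ad_{Fy}}+2\sigma_2F$, notes that $e^{-\bar\sigma\ad_{Fy}}=\bigl(e^{\sigma\ad_{Fy}}\bigr)^{\dagger}$ (anti-self-adjointness of $\ad_{Fy}$ for the $\Ad$-invariant inner product, together with complex conjugation), so the first summand is a congruence transform of the matrix $C$, which is positive definite by the $\sigma=0$ result of \cite{KMN1}, hence positive definite on all of $\g_{\C}$; the second summand $2\sigma_2F$ is positive semi-definite; and a positive-definite plus a positive semi-definite form is positive definite for $\tau_2>0$, $\sigma_2\ge0$. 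Your proposal never engages with the conjugation by $e^{\sigma\ad_{Fy}}$ with \emph{complex} $\sigma$, which is precisely the step that requires an argument beyond citing Proposition \ref{cstau}. The remaining pieces of your outline (isotropy via the local argument of \cite{MN}, and the boundary case $\sigma\in\R$ handled as a real symplectomorphism applied to the $\sigma=0$ structure) are sound.
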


\begin{proof}
The fact that ${\mathcal P}_{\tau,\sigma}$ is compatible with $\omega$, that is,
$$
\omega_{\vert_{{\mathcal P}_{\tau,\sigma}\times{\mathcal P}_{\tau,\sigma}}}= \omega_{\vert_{\bar{\mathcal P}_{\tau,\sigma}\times\bar{\mathcal P}_{\tau,\sigma}}} =0,
$$
follows directly from the proof of Theorem 4.1 in \cite{MN} (this is a purely local argument where the fact that $T^*G$ is not compact is irrelevant) and can also be easily checked by direct calculation. Positivity is equivalent to
$$
i\omega_{\vert_{\bar{\mathcal P}_{\tau,\sigma}\times{\mathcal P}_{\tau,\sigma}}} >0.
$$
Let 
\begin{IEEEeqnarray*}{rCls}
    M^{\tau,\sigma} &=& \biggl[\frac{1 - e^{\bar\tau \ad_{u_h(y)}}}{\ad_{u_h(y)}} H|_y e^{\bar \sigma \ad_{Fy}} - \bar\sigma F \biggr],\\
    N^{\tau,\sigma}&=& \bigl[ e^{\bar\sigma \ad_{Fy}} - \bar\sigma  \ad_y \circ F \bigr],
\end{IEEEeqnarray*}
\begin{equation}\label{frame}
E_j^{\tau,\sigma} = \left(M^{\tau,\sigma}(T_j), N^{\tau,\sigma} (T_j) \right),\quad j=1,\dots, n.
\end{equation}
We have, from (\ref{eq:omega2}),
\begin{IEEEeqnarray*}{rCl}
    \IEEEeqnarraymulticol{3}{l}{i \omega|_{(x,y)} (\overline{E_j^{\tau,\sigma}},E_k^{\tau,\sigma})} \\ 
    \quad & = & i \Big[ \langle \overline{M^{\tau,\sigma}(y)}T_j , N^{\tau,\sigma}(y) T_k \rangle- \langle \overline{N^{\tau,\sigma}(y)} T_j, M^{\tau,\sigma}(y) T_k \rangle \\ 
    \quad &  & \hphantom{\Big[} {}+ \langle \ad_y \overline{M^{\tau,\sigma}(y)} T_j , M^{\tau,\sigma}(y) T_k \rangle \Big] \\ 
	\quad & = & i \Big[ N^{\tau,\sigma}(y)^\top \overline{M^{\tau,\sigma}(y)} - M^{\tau,\sigma}(y)^\top \overline{N^{\tau,\sigma}(y)} + M^{\tau,\sigma}(y)^\top \ad_y \overline{M^{\tau,\sigma}(y)} \Big]^j_k
	\end{IEEEeqnarray*}
	Define $W^{\tau,\sigma} = i N^{\tau,\sigma}(y)^\top \overline{M^{\tau,\sigma}(y)} - i M^{\tau,\sigma}(y)^\top \overline{N^{\tau,\sigma}(y)} + i M^{\tau,\sigma}(y)^\top \ad_y \overline{M^{\tau,\sigma}(y)}$. Using the properties of $F$ and $h$, namely
	$$
	[\ad_y,H_h(y)] = [\ad_{u_h(y)}, H_h(y)] =0, \quad F \circ e^{\sigma \ad_{Fy}} = e^{\sigma \ad_{Fy}} \circ F = F,\quad F\circ \ad_y \circ F =0,
	$$
	we obtain
	\begin{equation}\nonumber
		W^{\tau,\sigma} = e^{-\bar\sigma \ad_{Fy}} \left( i \frac{1 - e^{2 i\tau_2 \ad_{u_h(y)}}}{\ad_{u_h(y)}} H_h(y) \right) e^{{\sigma} \ad_{Fy}} + 2 \sigma_2 F.
	\end{equation}
	From equation (3.7) in \cite{KMN} we obtain that the matrix
	$$
	C=\left( i \frac{1 - e^{2 i\tau_2 \ad_{u_h(y)}}}{\ad_{u_h(y)}} H_h(y) \right)
	$$
is positive definite for $\tau_2>0$, so that 
$$
e^{-\bar\sigma \ad_{Fy}} C e^{{\sigma} \ad_{Fy}} = \left(e^{{\sigma} \ad_{Fy}}\right)^\dagger C e^{{\sigma} \ad_{Fy}}
$$
is also positive definite. Therefore for $\tau\in {\C}^+, \sigma_2 \geq 0$ we obtain that ${\mathcal P}_{\tau,\sigma}$ is a K\"ahler polarization. 

Equation (\ref{kahlerpot}) for the K\"ahler potential can be obtained by Theorem 4.1 in \cite{MN} (where, again, only a local argument is used and the fact that 
$T^*G$ is non-compact is irrelevant). Alternatively, one can check explicitly, using (\ref{theta}) and the invariance of the inner product on $\frak g$, that
\begin{IEEEeqnarray*}{rCls}
    \theta (E_j^{\tau,\sigma}) &=& -\bar\tau \langle y, H_h(y) e^{\bar\sigma \ad_{Fy}} T_j\rangle - \langle y, \bar\sigma FT_j\rangle\\
    &=& d\bar\lambda_{\tau,\sigma} (E_j^{\tau,\sigma}),\\
\end{IEEEeqnarray*}
where
$\bar \lambda_{\tau,\sigma} = -\bar\tau (\langle y,u_h(y)\rangle - h(y)) -\bar\sigma f$, so that $\kappa_{\tau,\sigma} = 2 {\rm Im} \bar\lambda_{\tau,\sigma}$ 
is a K\"ahler potential, as claimed.
\end{proof}

Let now $\tau\in {\C}^+, \sigma\in {\C}^+\cup {\R}$, and consider the left $G$-invariant holomorphic trivializing frame for the canonical bundle of $(T^*G,J_{\tau,\sigma})$ given by
\begin{equation}\label{sectioncanonical}
\Omega_{\tau,\sigma} = e^{\sigma {\mathcal L}_{X_f}} \Omega_{\tau,0},
\end{equation}
where $\Omega_{\tau,0}$ is the left $G$-invariant trivializing section for the 
canonical bundle of $(T^*G,J_{\tau,0})$ which is described in Theorem 3.10 of \cite{KMN1}, namely
$$
\Omega_{\tau,0} = e^{\tau {\mathcal L}_{X_h}} w^1\wedge \cdots \wedge w^n. 
$$
Let $\sqrt{\Omega_{\tau,\sigma}}$ be a trivializing section of the bundle of half-forms, $\sqrt{K_{{\mathcal P}_{\tau,\sigma}}}$. 
(Here, a preferred choice of square root of the canonical bundle has been made, namely we take $\sqrt{K_{{\mathcal P}_{\tau,\sigma}}}$ to be trivializable where the trivializing section $\sqrt{\Omega_{\tau,\sigma}}$ is left $G$-invariant. See the Appendix in \cite{KMN1} for a more detailed discussion.)
The half-form correction will then be given by  
\begin{equation}\label{normhalf}
\vert\sqrt{\Omega_{\tau,\sigma}} \vert^2 = \sqrt{\frac{\bar\Omega_{\tau,\sigma}\wedge \Omega_{\tau,\sigma}}{(2i)^n (-1)^{n(n-1)/2} \omega^n/n!}}.
\end{equation}

>From Lemma 4.3 in \cite{KMN} we recall
$$
\vert\sqrt{\Omega_{\tau,0}} \vert^2 = \tau_2^{\frac{n}{2}} \eta(\tau_2u_h(y)) (\det H_h(y))^\frac12,
$$
where $\eta$ is the $\Ad$-invariant function on $\frak g$ which is defined for $y\in \frak t$ by
$$
\eta(y) = \Pi_{\alpha\in \Delta^+} \frac{\sinh {\alpha(y)}}{\alpha(y)},
$$
where $\Delta^+$ is the corresponding set of positive roots.
  
\begin{proposition}One has, for $\tau\in {\C}^+, \sigma \in {\C}^+\cup{\R}$,
 \begin{equation*}
 \vert\sqrt{\Omega_{\tau,\sigma}} \vert^2 =  \vert\sqrt{\Omega_{\tau,0}} \vert^2 \det\left(1+\sigma_2 \frac{\ad_{u_h(y)}e^{i\tau_2 \ad_{u_h(y)}}}{\sin \tau_2 
 \ad_{u_h(y)}}  F H_h(y)^{-1} \right)^{-\frac12}.
\end{equation*}
\end{proposition}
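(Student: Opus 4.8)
The plan is to turn the statement into a Jacobian computation for the diffeomorphism $A_{\tau,\sigma}$ (a diffeomorphism by Theorem~\ref{sure}). Since $\Omega_{\tau,0}$ and $A_{\tau,0}^{*}\Omega_{G_{\mathbb C}}$ are both left-$G$-invariant holomorphic trivializations of the canonical bundle of $(T^{*}G,J_{\tau,0})$, where $\Omega_{G_{\mathbb C}}$ is the bi-invariant holomorphic volume form on $G_{\mathbb C}$, they agree up to a nonzero constant $c_{\tau}$. Because $A_{\tau,s}=\psi_{\tau}\circ\alpha_{h}\circ\phi^{s}_{X_{f}}=A_{\tau,0}\circ\phi^{s}_{X_{f}}$ for $s\in\R$, we get $\Omega_{\tau,s}=(\phi^{s}_{X_{f}})^{*}\Omega_{\tau,0}=c_{\tau}A_{\tau,s}^{*}\Omega_{G_{\mathbb C}}$; both sides are holomorphic in $s$, so by analytic continuation $\Omega_{\tau,\sigma}=c_{\tau}A_{\tau,\sigma}^{*}\Omega_{G_{\mathbb C}}$ for $\sigma\in\mathbb C^{+}$. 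As $A_{\tau,\sigma}$ is a (smooth, real) diffeomorphism, $\overline{A_{\tau,\sigma}^{*}\Omega_{G_{\mathbb C}}}=A_{\tau,\sigma}^{*}\overline{\Omega_{G_{\mathbb C}}}$, hence $\bar\Omega_{\tau,\sigma}\wedge\Omega_{\tau,\sigma}=|c_{\tau}|^{2}A_{\tau,\sigma}^{*}(\overline{\Omega_{G_{\mathbb C}}}\wedge\Omega_{G_{\mathbb C}})$. Since $\overline{\Omega_{G_{\mathbb C}}}\wedge\Omega_{G_{\mathbb C}}$ is a left-invariant volume form on $G_{\mathbb C}$ and $\omega^{n}/n!$ one on $T^{*}G$, dividing by $\omega^{n}/n!$ and forming the ratio with the $\sigma=0$ case gives
$$\frac{\vert\sqrt{\Omega_{\tau,\sigma}}\vert^{2}}{\vert\sqrt{\Omega_{\tau,0}}\vert^{2}}=\Bigl(\frac{\bar\Omega_{\tau,\sigma}\wedge\Omega_{\tau,\sigma}}{\bar\Omega_{\tau,0}\wedge\Omega_{\tau,0}}\Bigr)^{1/2}=\Bigl(\frac{\det_{\R}\Phi_{\sigma}}{\det_{\R}\Phi_{0}}\Bigr)^{1/2},$$
where $\Phi_{\sigma}\colon\g\oplus\g\to\g_{\mathbb C}$ is the left-trivialized differential of $A_{\tau,\sigma}$ at $(x,y)$ and $\det_{\R}$ is the determinant of this map of $2n$-dimensional real vector spaces.

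\textbf{Computing $\Phi_{\sigma}$.} Differentiating $A_{\tau,\sigma}(xe^{tU},y+t\mathcal V)=xe^{tU}e^{\tau u_{h}(y+t\mathcal V)}e^{\sigma F(y+t\mathcal V)}$ at $t=0$, left-translating by $A_{\tau,\sigma}(x,y)^{-1}$, and using the derivative-of-exponential formula, $\Ad$-equivariance (\ref{adinvu}), and the identities $F\circ\ad_{Fy}=\ad_{Fy}\circ F=0$ (valid because $\image F\subset\ttt$ and $Fy\in\ttt$), one obtains
$$\Phi_{\sigma}=e^{-\sigma\ad_{Fy}}\Phi_{0}+\sigma F\circ\mathrm{pr}_{2},\qquad \Phi_{0}(U,\mathcal V)=e^{-\tau\ad_{u_{h}(y)}}U+\tfrac{1-e^{-\tau\ad_{u_{h}(y)}}}{\ad_{u_{h}(y)}}H_{h}(y)\,\mathcal V,$$
where $\mathrm{pr}_{2}(U,\mathcal V)=\mathcal V$; $\Phi_{0}$ is, up to a left translation, the $G$-block of $D\phi^{t}_{X_{h}}$ of Lemma~\ref{derivatives} continued to $t=\tau$, and in particular $\det_{\R}\Phi_{0}\neq0$ since $A_{\tau,0}$ is a diffeomorphism. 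Thus $\det_{\R}\Phi_{\sigma}/\det_{\R}\Phi_{0}=\det_{\R}\bigl(e^{-\sigma\ad_{Fy}}+\sigma F\circ\mathrm{pr}_{2}\circ\Phi_{0}^{-1}\bigr)$.

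\textbf{Evaluation.} Both sides of the asserted identity are $G\times T$-invariant functions of $(x,y)$ — the left side because $\Omega_{\tau,\sigma}=e^{\sigma\mathcal L_{X_{f}}}\Omega_{\tau,0}$ is $G\times T$-invariant, the right side because $u_{h}$ and $H_{h}$ transform by $\Ad$-conjugation while $F$ commutes with $\Ad_{T}$ — so it suffices to evaluate at $(e,y)$ with $y\in\ttt$. There $\ad_{u_{h}(y)}$ and $\ad_{Fy}$ vanish on $\ttt$ and preserve $\ttt^{\perp}$, $H_{h}(y)$ preserves $\g=\ttt\oplus\ttt^{\perp}$, and $F(\ttt)\subset\ttt$, $F(\ttt^{\perp})=0$, so $\Phi_{0}$ and the correction operator $e^{-\sigma\ad_{Fy}}+\sigma F\circ\mathrm{pr}_{2}\circ\Phi_{0}^{-1}$ are block-diagonal for $\g_{\mathbb C}=\ttt_{\mathbb C}\oplus\ttt^{\perp}_{\mathbb C}$. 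The $\ttt^{\perp}_{\mathbb C}$-block of $e^{-\sigma\ad_{Fy}}$ has real determinant $1$ (as $\mathrm{tr}\,\ad_{Fy}=0$), so only the $\ttt_{\mathbb C}$-block contributes; on $\ttt$ one has $\Phi_{0}(U,\mathcal V)=U+\tau H_{h}(y)|_{\ttt}\mathcal V$, whence $\mathrm{pr}_{2}\circ\Phi_{0}^{-1}$ acts on $\ttt_{\mathbb C}$ as $\tfrac1{\tau_{2}}(H_{h}(y)|_{\ttt})^{-1}\circ\im$ (imaginary part), and a $2\times2$ block-determinant over $\ttt$ yields
$$\frac{\det_{\R}\Phi_{\sigma}}{\det_{\R}\Phi_{0}}={\det}_{\ttt}\!\Bigl(\mathrm{id}+\tfrac{\sigma_{2}}{\tau_{2}}F\,(H_{h}(y)|_{\ttt})^{-1}\Bigr)=\det\!\Bigl(1+\sigma_{2}\,\tfrac{\ad_{u_{h}(y)}e^{i\tau_{2}\ad_{u_{h}(y)}}}{\sin\tau_{2}\ad_{u_{h}(y)}}\,F\,H_{h}(y)^{-1}\Bigr),$$
the last equality because $\ad_{u_{h}}e^{i\tau_{2}\ad_{u_{h}}}/\sin\tau_{2}\ad_{u_{h}}$ restricts to $1/\tau_{2}$ on $\ttt$ and $FH_{h}^{-1}$ annihilates $\ttt^{\perp}$ (one may also use $\det(1+AB)=\det(1+BA)$). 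Substituting into the displayed ratio and multiplying by $\vert\sqrt{\Omega_{\tau,0}}\vert^{2}$ gives the stated formula, modulo a final check of the sign of the square-root exponent. The step I expect to be most delicate is the explicit differentiation of $A_{\tau,\sigma}$: for non-real $\sigma$ the flow $\phi^{\sigma}_{X_{f}}$ is not a self-map of $T^{*}G$, so Lemma~\ref{composederivatives} cannot be invoked, and one must differentiate $A_{\tau,\sigma}$ directly, carefully tracking which factors of $xe^{\tau u_{h}(y)}e^{\sigma Fy}$ enter as left- versus right-translations.
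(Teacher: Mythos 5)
Your overall strategy is the same as the paper's: the paper also reduces the statement to a Jacobian computation for $A_{\tau,\sigma}$, constructing a holomorphic frame for ${\mathcal P}_{\tau,\sigma}$ by applying $DA_{\tau,\bar\sigma}^{-1}$ to a standard frame on $G_{\C}$ and evaluating the determinant of an explicit $2n\times 2n$ matrix $O$ (with blocks involving $\frac{\cos\tau_2\ad_{u_h(y)}}{\sin\tau_2\ad_{u_h(y)}}$, $\sigma F\ad_y$, etc.) for \emph{arbitrary} $y\in\g$, using the identities $F\circ e^{s\ad_{Fy}}=F$, $F\circ\ad_y\circ F=0$ and $[\ad_y,H_h(y)]=0$. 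Your first two steps (the reduction to $\det_{\R}\Phi_\sigma/\det_{\R}\Phi_0$ and the formula for $\Phi_\sigma$) are sound and are essentially the dual of what the paper does.

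The genuine gap is in your ``Evaluation'' step. The $G\times T$-action is $(g,t)\cdot(x,y)=(gxt,\Ad_{t^{-1}}y)$, so $G\times T$-invariance lets you set $x=e$ but only moves $y$ within its $\Ad_T$-orbit; since $\Ad_T$ preserves the splitting $\g=\ttt\oplus\ttt^\perp$, a generic $y\in\g$ cannot be conjugated into $\ttt$ by $T$ (that would require the full adjoint action of $G$, which is not a symmetry here because $F$ breaks it). Hence the set $\{(e,y):y\in\ttt\}$ meets only a measure-zero family of $G\times T$-orbits, and your block-diagonalization of $\Phi_0$ with respect to $\ttt_{\C}\oplus\ttt^\perp_{\C}$ — which uses $\ad_{u_h(y)}\vert_{\ttt}=0$ and $H_h(y)(\ttt)\subset\ttt$, both false for $y\notin\ttt$ — is unavailable at general points. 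This is not a removable technicality: the factor $\frac{\ad_{u_h(y)}e^{i\tau_2\ad_{u_h(y)}}}{\sin\tau_2\ad_{u_h(y)}}$ in the statement is a genuinely nonconstant operator off $\ttt$ and arises precisely from inverting $\Phi_0$ as a \emph{real}-linear map at general $y$ (separating real and imaginary parts of $e^{-\tau\ad_{u_h(y)}}$), which your argument never performs; restricting to $y\in\ttt$ only ever sees its degenerate value $1/\tau_2$. So what you have is a verification of the formula on $G\times\ttt$, not a proof. To close the gap you must carry out the determinant for arbitrary $y$, as the paper does. Finally, your computation (correctly, in my view — check the rank-one case $G=U(1)$, $h=y^2/2$, where $\Omega_{\tau,\sigma}=w^1+(\tau+\sigma F)dy$ gives $\vert\sqrt{\Omega_{\tau,\sigma}}\vert^2=(\tau_2+\sigma_2F)^{1/2}$) produces the exponent $+\tfrac12$ rather than the $-\tfrac12$ in the statement; you flag this but leave it unresolved, and it should be settled rather than deferred.
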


\begin{proof} 
This follows by direct evaluation. Let $DA_{\tau,\sigma}, \sigma\in {\C}$, denote the (unique) analytic continuation of $DA_{\tau,s}, s\in {\R}$, from $s$ 
to $\sigma$. An holomorphic frame $\{Z^j_{\tau,\sigma}\}_{j=1,\dots , n}$ for ${\mathcal P}_{\tau,\sigma}$ can be obtained by applying 
$$
DA_{\tau,\bar\sigma}^{-1} 
$$
to an holomorphic frame on $G_{\C}$ given by the columns of the $2n\times n$ matrix
$$
\frac12 \left[
\begin{array}{c}
I \\
-i I
\end{array} \right].
$$
For $\sigma =0$ this coincides with the frame given in \cite{KMN13}.
One then obtains
$$
Z_{\tau,\sigma}^1 \wedge \cdots \wedge Z_{\tau,\sigma}^n \wedge \bar Z_{\tau,\sigma}^1 \wedge \cdots \wedge \bar Z_{\tau,\sigma}^n = 
(\det O) X_1 \wedge \cdots X_n \wedge \frac{\partial}{\partial y^1} \wedge \cdots \wedge \frac{\partial}{\partial y^n},
$$
where $O$ is the $2n\times 2n$ matrix
$$
O=\left[
\begin{array}{cc}
A & B\\
C & D
\end{array}\right] \cdot
\left[\begin{array}{cc}
\Ad_{e^{\bar\sigma Fy}} & 0\\
0 & \Ad_{e^{\sigma Fy}}
\end{array}\right],
$$
where the $n\times n$ blocks are given by
\begin{IEEEeqnarray*}{rCls}
    A & = & \left(1-i\frac{e^{\tau_1 \ad_{u_h(y)}}}{\sin \tau_2 \ad_{u_h(y)}} (e^{-\tau_1 \ad_{u_h(y)}}\cos(\tau_2 \ad_{u_h(y)})-1)\right) -\bar\sigma F \ad_y \left(1-i\frac{\cos\tau_2 \ad_{u_h(y)}}{\sin \tau_2 \ad_{u_h(y)}}\right), \\
B & = & \left(1+i\frac{e^{\tau_1 \ad_{u_h(y)}}}{\sin \tau_2 \ad_{u_h(y)}} (e^{-\tau_1 \ad_{u_h(y)}}\cos(\tau_2 \ad_{u_h(y)})-1)\right) -\sigma F \ad_y \left(1+i\frac{\cos\tau_2 \ad_{u_h(y)}}{\sin \tau_2 \ad_{u_h(y)}}\right),\\
C & = & (1-\bar\sigma \ad_y \circ F) \ad_y \left(1-i\frac{\cos\tau_2 \ad_{u_h(y)}}{\sin \tau_2 \ad_{u_h(y)}}\right),\\
D & = & (1-\sigma \ad_y \circ F) \ad_y \left(1+i\frac{\cos\tau_2 \ad_{u_h(y)}}{\sin \tau_2 \ad_{u_h(y)}}\right).
\end{IEEEeqnarray*}
The result then follows by using the properties of $F$ and $\ad_y$ to evaluate the determinant and by (\ref{normhalf}). 
\end{proof}

We then obtain the following immediate

\begin{corollary}\label{half-formgrowth}There are positive constants $c_0,c_1$ such that
$$
\vert\sqrt{\Omega_{\tau,\sigma}} \vert^2 \leq c_0 e^{{c_1} \vert\vert y \vert\vert}.
$$
\end{corollary}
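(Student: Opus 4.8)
The plan is to combine the formula of the preceding Proposition,
\[
\vert\sqrt{\Omega_{\tau,\sigma}}\vert^2 \;=\; \vert\sqrt{\Omega_{\tau,0}}\vert^2\,\cdot\,\det\!\Big(1+\sigma_2\,M(y)\Big)^{-\tfrac12},\qquad M(y):=\frac{\ad_{u_h(y)}\,e^{i\tau_2\ad_{u_h(y)}}}{\sin\tau_2\ad_{u_h(y)}}\,F\,H_h(y)^{-1},
\]
with two separate estimates: that the first factor is $\le c_0\,e^{c_1\|y\|}$, and that the second factor is $\le 1$. Recall that here $\tau\in\C^+$, so $\tau_2>0$, and $\sigma\in\C^+\cup\R$, so $\sigma_2\ge 0$.

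The main step is the estimate $\det(1+\sigma_2 M(y))^{-1/2}\le 1$, which I would deduce from the claim that $M(y)$ has nonnegative real spectrum. First, $\Ad$-invariance of $\langle\cdot,\cdot\rangle$ makes $\ad_{u_h(y)}$ skew-adjoint on $\g_\C$, so its spectrum lies in $i\R$; since the (real, nonzero) poles of $z\mapsto z/\sin(\tau_2 z)$ are therefore not met and the singularity at $0$ is removable, $P(y):=\ad_{u_h(y)}e^{i\tau_2\ad_{u_h(y)}}/\sin(\tau_2\ad_{u_h(y)})$ is well defined, and evaluating at an eigenvalue $z=i\beta$ gives the \emph{real} strictly positive value $\beta e^{-\tau_2\beta}/\sinh(\tau_2\beta)$ (resp.\ $1/\tau_2$ at $\beta=0$); hence $P(y)$ is positive-definite self-adjoint, and by (\ref{commutes}) it commutes with $H_h(y)$. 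Conjugating $M(y)=P(y)FH_h(y)^{-1}$ by $H_h(y)^{1/2}$ gives
\[
H_h(y)^{-\tfrac12}M(y)H_h(y)^{\tfrac12}\;=\;P(y)\big(H_h(y)^{-\tfrac12}F\,H_h(y)^{-\tfrac12}\big),
\]
a product of a positive-definite self-adjoint operator with the positive-semidefinite self-adjoint operator $H_h(y)^{-1/2}FH_h(y)^{-1/2}$ (using $F\ge 0$), which is conjugate to $P(y)^{1/2}H_h(y)^{-1/2}FH_h(y)^{-1/2}P(y)^{1/2}\ge 0$ and so has eigenvalues $\mu_1(y),\dots,\mu_n(y)\ge 0$. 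Consequently $\det(1+\sigma_2 M(y))=\prod_j(1+\sigma_2\mu_j(y))\ge 1$ for $\sigma_2\ge 0$, giving $\det(1+\sigma_2 M(y))^{-1/2}\le 1$; in particular $\vert\sqrt{\Omega_{\tau,\sigma}}\vert^2\le\vert\sqrt{\Omega_{\tau,0}}\vert^2$.

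For the $\sigma=0$ factor I would use the formula recalled above, $\vert\sqrt{\Omega_{\tau,0}}\vert^2=\tau_2^{n/2}\eta(\tau_2 u_h(y))(\det H_h(y))^{1/2}$, together with the elementary inequality $\sinh t\le |t|e^{|t|}$, $t\in\R$: by $\Ad$-invariance of $\eta$ and of $\|\cdot\|$ this yields $\eta(\tau_2 u_h(y))=\prod_{\alpha\in\Delta^+}\tfrac{\sinh\alpha(\tau_2 u_h(y))}{\alpha(\tau_2 u_h(y))}\le e^{\sum_{\alpha\in\Delta^+}|\alpha(\tau_2 u_h(y))|}\le e^{c\,\tau_2\|u_h(y)\|}$. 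For the complexifiers $h$ considered here — in particular the quadratic $h(y)=\tfrac12\langle y,y\rangle$ underlying Hall's transform, for which $u_h(y)=y$ and $H_h\equiv\id$ — the gradient $u_h$ has at most linear growth and $\det H_h$ is bounded, so $\vert\sqrt{\Omega_{\tau,0}}\vert^2\le c_0 e^{c_1\|y\|}$; this is of the same type as, and follows from, the growth estimates in \cite{KMN1}. Combining the two estimates gives the corollary, with the very same constants $c_0,c_1$.

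The only genuinely non-routine point is the spectral claim for $M(y)$: although $M(y)$ is not self-adjoint, it is conjugate to a product of two honestly positive operators, and this is exactly what forces $\det(1+\sigma_2 M(y))\ge 1$ for $\sigma_2\ge 0$ — i.e.\ the new, $\sigma$-dependent factor can only \emph{improve} the bound, so the known exponential growth for $\sigma=0$ is all that is needed. The $\sigma=0$ estimate itself is routine once the (mild, standing) growth hypotheses on $h$ are in force.
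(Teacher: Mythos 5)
The paper offers no proof at all here (the corollary is declared ``immediate''), so the only meaningful comparison is whether your reconstruction is sound, and your overall architecture --- bound the $\sigma$-dependent determinant factor by $1$, then bound $\vert\sqrt{\Omega_{\tau,0}}\vert^2$ --- is surely the intended one. Your key step is correct and nicely done: $P(y)=\ad_{u_h(y)}e^{i\tau_2\ad_{u_h(y)}}/\sin(\tau_2\ad_{u_h(y)})$ is Hermitian positive definite because $\ad_{u_h(y)}$ is skew-adjoint with purely imaginary spectrum and $z\mapsto ze^{i\tau_2 z}/\sin(\tau_2 z)$ takes positive real values on $i\R$; it commutes with $H_h(y)$ by (\ref{commutes}); and the conjugation $M(y)\sim P(y)^{1/2}H_h(y)^{-1/2}FH_h(y)^{-1/2}P(y)^{1/2}\geq 0$ forces $\det(1+\sigma_2 M(y))\geq 1$ for $\sigma_2\geq 0$. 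This is more than the corollary needs (boundedness of the factor would suffice), but it is the clean way to see it.

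The one genuine caveat is in your second step. The estimate $\vert\sqrt{\Omega_{\tau,0}}\vert^2=\tau_2^{n/2}\eta(\tau_2 u_h(y))(\det H_h(y))^{1/2}\leq c_0e^{c_1\Vert y\Vert}$ requires $\Vert u_h(y)\Vert\leq C(1+\Vert y\Vert)$ and $\det H_h(y)$ bounded, and neither follows from the paper's stated hypotheses (i)--(iii) on $h$ (positive definite Hessian with a uniform lower bound says nothing about upper growth; e.g.\ $h(y)=\Vert y\Vert^4$ makes $\eta(\tau_2u_h(y))$ grow like $e^{c\Vert y\Vert^3}$). You acknowledge this but phrase it as if linear growth of $u_h$ were a standing assumption ``for the complexifiers considered here''; it is not, so as written your argument proves the corollary only for that subclass. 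This is best regarded as a gap in the paper's statement rather than in your proof --- the corollary as literally stated appears to need such a growth hypothesis, or else the right-hand side should be $c_0e^{c_1\Vert u_h(y)\Vert}$ (which is what the application in Proposition \ref{matrixelementisl2} can absorb, since there the decay comes from the Legendre transform of $h$ in the variable $u_h(y)$). It would strengthen your write-up to state the needed hypothesis explicitly rather than attributing it to the paper.
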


\subsection{The mixed polarizations ${\mathcal P}_{0,\sigma}$ and partial K\"ahler structures}

By setting $\tau =0$ in the setting of the previous Section we obtain a family of mixed $G\times T$-invariant polarizations ${\mathcal P}_{0,\sigma}$. The superscripts $\frak t$ and ${\frak t}^\perp$ will denote components of vectors along $\frak t$ and ${\frak t}^\perp$, respectively.

Let then,
\begin{IEEEeqnarray}{rCl}
		\nonumber \mathcal{P}_{0, \sigma}|_{(x,y)} & = & \Bigl\{ \left(  - \bar\sigma F A, \bigl[ e^{\bar\sigma \ad_{Fy}} - \bar\sigma  \ad_y \circ F \bigr] A \right) \Bigm| A \in \g_{\C} \Bigr\} \IEEEeqnarraynumspace \\
		\nonumber & = & \left\{ \left( -\bar\sigma   F A, A \right) \ |\  A \in \g_{\C}  \right\}. \IEEEeqnarraynumspace
	\end{IEEEeqnarray}

We have
$$
\mathcal{P}_{0, \sigma} = \mathcal{P}_{0, \sigma}^{\frak t}  \oplus \mathcal{P}_{0, \sigma}^{{\frak t}^\perp},
$$
where
\begin{IEEEeqnarray*}{rCls}
    \mathcal{P}_{0, \sigma}^{\frak t} & = &\left\{ \left( -\bar\sigma   FA, A \right) \bigm|  A \in {\frak t}_{\C}  \right\},\\
    \mathcal{P}_{0, \sigma}^{{\frak t}^\perp} &= &\left\{ \left( 0, A \right) \bigm|  A \in {\frak t}^\perp_{\C}  \right\}.
\end{IEEEeqnarray*}
Note that for $\sigma\neq 0$, we also have
$$
\mathcal{P}_{0, \sigma}^{\frak t} = \left\{ \left( A, -\bar\sigma^{-1} F^{-1}A \right) \bigm| A \in {\frak t}_{\C}  \right\}.
$$

\begin{lemma}
One has, for $\sigma_2>0$,
$$
\overline{\mathcal{P}_{0, \sigma}^{\frak t}} \cap \mathcal{P}_{0, \sigma}^{\frak t} =\left\{0\right\},\quad \overline{\mathcal{P}_{0, \sigma}^{{\frak t}^\perp}} = 
\mathcal{P}_{0, \sigma}^{{\frak t}^\perp}.
$$
\end{lemma}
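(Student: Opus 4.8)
The plan is to verify both equalities by elementary linear algebra on the fibers, using only that $F$ is a real operator, that ${\frak t}$ is a real subspace, and that $F|_{\frak t}>0$.

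First I would dispose of the second equality. Since ${\frak t}^\perp$ is a real subspace of $\g$, its complexification ${\frak t}^\perp_{\C}$ is invariant under complex conjugation; as $\mathcal{P}_{0,\sigma}^{{\frak t}^\perp} = \{0\}\oplus {\frak t}^\perp_{\C}$ does not depend on $\sigma$ at all, it is manifestly conjugation-invariant, so $\overline{\mathcal{P}_{0,\sigma}^{{\frak t}^\perp}} = \mathcal{P}_{0,\sigma}^{{\frak t}^\perp}$.

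For the first equality I would first compute $\overline{\mathcal{P}_{0,\sigma}^{\frak t}}$. Writing a general element as $(-\bar\sigma F A, A)$ with $A\in {\frak t}_{\C}$, and using that $F$ is real and ${\frak t}_{\C}$ is conjugation-invariant, its conjugate is $(-\sigma F \bar A, \bar A)$, hence
$$
\overline{\mathcal{P}_{0,\sigma}^{\frak t}} = \bigl\{ (-\sigma F B, B) \bigm| B \in {\frak t}_{\C} \bigr\}.
$$
An element of the intersection then satisfies $(-\bar\sigma F A, A) = (-\sigma F B, B)$ for some $A,B\in {\frak t}_{\C}$; equating second components gives $A = B$, and then the first components yield $(\sigma - \bar\sigma) F A = 2i\sigma_2 F A = 0$. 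Since $\sigma_2>0$ and $F|_{\frak t}$ is positive definite --- so that its $\C$-linear extension to ${\frak t}_{\C}$ is invertible --- this forces $A = 0$, and the intersection is trivial. Equivalently, one may run the same computation with the alternative description $\mathcal{P}_{0,\sigma}^{\frak t} = \{ (A, -\bar\sigma^{-1} F^{-1} A) \mid A \in {\frak t}_{\C}\}$ valid for $\sigma\neq 0$, matching first components and using $\bar\sigma^{-1} - \sigma^{-1} = 2i\sigma_2/|\sigma|^2 \neq 0$.

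There is essentially no obstacle here: the statement follows at once from $F$ being real and invertible on ${\frak t}_{\C}$ and from $\sigma_2 \neq 0$. The only point worth stating explicitly is that positive-definiteness of $F|_{\frak t}$ gives invertibility of its complex-linear extension to ${\frak t}_{\C}$, which is what legitimizes passing from $FA = 0$ to $A = 0$ for complex $A$.
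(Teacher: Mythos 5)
Your proof is correct and follows essentially the same route as the paper, whose entire argument is the one-line observation that the claim ``follows immediately from the fact that $f$ [sic; meaning $F|_{\frak t}$] is a linear isomorphism and from $\sigma_2>0$'' --- precisely the two ingredients you isolate and spell out. Your explicit computation of the conjugate distribution and the reduction to $2i\sigma_2 FA=0$ is just a careful expansion of that remark.
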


\begin{proof}This follows immediately from the fact that $f$ is a linear isomorphism and from $\sigma_2 >0$.
\end{proof}

We see that the polarization ${\mathcal P}_{0,\sigma}$ is mixed. Remarkably, we will now see that it is associated with a foliation of $T^*G$ by submanifolds with
K\"ahler structure defined by $\mathcal{P}_{0, \sigma}^{\frak t}$. 

Let $p\colon G\to G/T$ be the principal fiber bundle over the flag manifold $G/T$ obtained by the right action of $T$ on $G$. Let $F_x$ be the fiber of $p$ containing $x\in G$. Note that $F_x \hookrightarrow G$ is an embedded submanifold which is (non-canonically) diffeomorphic to $T$. 

\begin{theorem}For $\sigma_2>0$, the distribution ${\mathcal P}_{0,\sigma}^{\frak t}$ defines a K\"ahler structure along the symplectic 
submanifolds $L_{(x,y)}=F_x\times \left\{y+\frak t\right\} \stackrel{\iota}{\hookrightarrow} T^*G,\,\, (x,y)\in T^*G$. 
A global K\"ahler potential along $L_{(x,y)}$ is given by
$$
\kappa_{0,\sigma} = 2\sigma_2 (\iota^*f).
$$
\end{theorem}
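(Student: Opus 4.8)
The plan is to verify that the submanifolds $L_{(x,y)} = F_x \times \{y + \ttt\}$ are symplectic, that the restriction of $\mathcal{P}_{0,\sigma}^{\ttt}$ to each leaf is a genuine K\"ahler polarization of $(L_{(x,y)}, \iota^*\omega)$, and finally that $2\sigma_2(\iota^* f)$ is a global K\"ahler potential there. First I would check that the distribution $(x,y) \mapsto TL_{(x,y)}$ is integrable with leaves $L_{(x,y)}$: tangent to $L_{(x,y)}$ at a point are exactly the vectors of the form $(A, B)$ with $A \in \ttt$ (generating the fiber direction $F_x$, since left-invariant vector fields in $\ttt$ are tangent to the $T$-orbit) and $B \in \ttt$ (moving $y$ within $y + \ttt$). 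So $TL_{(x,y)} \cong \ttt \oplus \ttt \subset \g \oplus \g$. Using the explicit formula (\ref{eq:omega2}) for $\omega_{(x,y)}$, restricted to arguments in $\ttt \oplus \ttt$, the term $\langle y, [U,W]\rangle$ with $U, W \in \ttt$ vanishes because $\ttt$ is abelian; hence $\iota^*\omega$ is the standard symplectic pairing $\langle U, Z\rangle - \langle V, W\rangle$ on $\ttt \oplus \ttt \cong T^*T$, which is nondegenerate. So each $L_{(x,y)}$ is a symplectic submanifold, indeed symplectomorphic to $T^*T$, and it is preserved by the combined flow in the $\ttt$-directions.

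Next I would observe that $\mathcal{P}_{0,\sigma}^{\ttt}|_{(x,y)} = \{(-\bar\sigma F A, A) \mid A \in \ttt_{\C}\}$ is visibly contained in $TL_{(x,y)} \otimes \C = \ttt_{\C} \oplus \ttt_{\C}$ (using $\image F \subset \ttt$), and is a rank-$r$ complex subspace; it is Lagrangian for $\iota^*\omega$ by the compatibility already established in Theorem \ref{ohyes} (or by a direct one-line check: $\langle -\bar\sigma F A, B\rangle - \langle A, -\bar\sigma F B\rangle = 0$ by self-adjointness of $F$). The previous Lemma gives $\overline{\mathcal{P}_{0,\sigma}^{\ttt}} \cap \mathcal{P}_{0,\sigma}^{\ttt} = \{0\}$ for $\sigma_2 > 0$, so $\mathcal{P}_{0,\sigma}^{\ttt}$ together with its conjugate spans $TL_{(x,y)} \otimes \C$; that is, it defines an almost complex structure $J$ on $L_{(x,y)}$. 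Integrability of $J$ follows because $\mathcal{P}_{0,\sigma}^{\ttt}$ is spanned by the constant (in the trivialization) vector fields $A \mapsto (-\bar\sigma F A, A)$ whose brackets vanish by the same abelian-$\ttt$ computation that killed the structure-constant term; alternatively, $\mathcal{P}_{0,\sigma}^{\ttt}$ is just the restriction to the leaf of the involutive polarization $\mathcal{P}_{0,\sigma}$ of Theorem \ref{super}, and involutivity is inherited. Positivity — i.e. $i\,\iota^*\omega(\bar\xi, \xi) > 0$ for $0 \neq \xi \in \mathcal{P}_{0,\sigma}^{\ttt}$ — reduces, via (\ref{eq:omega2}) with the bracket term gone, to $i[\langle \overline{-\bar\sigma F A}, A\rangle - \langle \bar A, -\bar\sigma F A\rangle] = i(-\sigma + \bar\sigma)\langle F A, \bar A\rangle = 2\sigma_2 \langle F A, \bar A\rangle$, which is strictly positive for $\sigma_2 > 0$ since $F|_{\ttt} > 0$. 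Hence $(L_{(x,y)}, \iota^*\omega, J)$ is K\"ahler.

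Finally, for the K\"ahler potential: one computes $\iota^*\theta$ on a frame vector $\xi_j = (-\bar\sigma F T_j, T_j)$ with $T_j \in \ttt$, getting from (\ref{theta}) that $\theta(\xi_j) = \langle y, -\bar\sigma F T_j\rangle = -\bar\sigma \langle F y, T_j\rangle = -\bar\sigma \,df_y(T_j)$, i.e. $\iota^*\theta = d(-\bar\sigma \iota^* f)$ along $\mathcal{P}_{0,\sigma}^{\ttt}$ — so $-\bar\sigma\,\iota^*f$ is a ``holomorphic potential'' and $\kappa_{0,\sigma} = 2\,\mathrm{Im}(-\bar\sigma\,\iota^*f) = 2\sigma_2\,\iota^*f$ is a global K\"ahler potential, exactly as in the $\sigma=0 \to \sigma$ specialization of (\ref{kahlerpot}). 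This is completely parallel to the end of the proof of Theorem \ref{ohyes}, just with $h$ set to $0$, so I would phrase it as an application of that argument to each leaf. I expect the only mildly delicate point to be bookkeeping the identification $TL_{(x,y)} \cong T^*T$ consistently — checking that the left-invariant vector fields indexed by a basis of $\ttt$ really do span the tangent space to the fiber $F_x$ and that $\iota^*\theta$ is the canonical one-form of $T^*T$ under this identification — but this is routine once the abelian structure of $\ttt$ is exploited to discard every curvature/structure-constant term.
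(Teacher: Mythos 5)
Your proposal is correct and follows essentially the same route as the paper's proof: identify $T L_{(x,y)}\cong \ttt\oplus\ttt$, note that the structure-constant term in $\omega$ vanishes on $\ttt\oplus\ttt$ so the leaves are symplectic and the isotropy/positivity of $\mathcal{P}_{0,\sigma}^{\ttt}$ reduce to the elementary computations $-\bar\sigma(\langle FA,B\rangle-\langle A,FB\rangle)=0$ and $i(\bar\sigma-\sigma)\langle FA,\bar A\rangle=2\sigma_2\langle FA,\bar A\rangle>0$, and obtain the potential from $\iota^*\theta=-\bar\sigma\,d(\iota^*f)$ on the polarization exactly as at the end of Theorem \ref{ohyes}. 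The only difference is presentational: where the paper delegates the compatibility and positivity checks to ``as in Theorem \ref{ohyes}'', you carry them out explicitly using the abelianness of $\ttt$, and you additionally make explicit the nondegeneracy of $\iota^*\omega$ and the integrability of the induced complex structure, which the paper leaves implicit.
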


\begin{proof}
Let $\Sigma_\sigma\subset T(T^*G)$ be the (real) distribution defined by
$$
\Sigma_{\sigma} \otimes {\C} = ({\rm Re}{\mathcal P}_{0,\sigma}^{\frak t} \oplus {\rm Im}{\mathcal P}_{0,\sigma}^{\frak t})\otimes {\C} = 
{\mathcal P}_{0,\sigma}^{\frak t}\oplus \overline{{\mathcal P}_{0,\sigma}^{\frak t}}.
$$
It is straighforward to check that $\Sigma_\sigma$ is involutive whence it defines a foliation of $T^*G$. It is also easy to verify that 
$\Sigma_\sigma = \frak t \oplus \frak t$ so that indeed 
$$
T_{(x,y)}L_{(x,y)} = \Sigma_\sigma\vert_{(x,y)}.
$$
Now, one can take the frame (\ref{frame}) for $\tau =0$ and ordering the basis $\left\{T_j\right\}_{j=1,\dots, n}$ so that $\left\{T_j\right\}_{j=1,\dots, r}$
is a basis of $\frak t$, to get 
$$
{\mathcal P}_{0,\sigma}^{\frak t} = \spn_{\C} \Bigl\{ {E_j^{0,\sigma}}^{\frak t} \Bigm| j=1,\dots, r\Bigr\}.
$$
As in Theorem \ref{ohyes}, one can checks that, for $\sigma_2>0$,
$$
\omega\vert_{{{\mathcal P}_{0,\sigma}^{\frak t}}\times {\mathcal P}_{0,\sigma}^{\frak t}}=0,
$$
and
$$
i\omega\vert_{\overline{{\mathcal P}_{0,\sigma}^{\frak t}}\times {\mathcal P}_{0,\sigma}^{\frak t}} >0,
$$ 
so that, indeed, the leaves $L_{(x,y)}$ are K\"ahler. Note that $\iota^*\theta$ is a potential for $\iota^*\omega$ and that
$$
\iota^*\theta \Bigl({E^{0,\sigma}_j}^{\frak t}\Bigr) = -\bar\sigma df \Bigl({E^{0,\sigma}_j}^{\frak t}\Bigr), \quad j=1, \dots, r,
$$
so that, as in the proof of Theorem \ref{ohyes}, $2\sigma_2 f\circ \iota$ is a K\"ahler potential for $\iota^*\omega$.
\end{proof}

We see that the complexifier $f$, being convex only ``along the directions of the maximal torus $T$'', generates, by push-forward of the vertical polarization, a 
``$G\times T$-invariant'' foliation of $T^*G$ by K\"ahler manifolds, each of these being diffeomorphic to $T_{\C}\cong T^*T \cong T\times \frak t$.

\begin{remark}In the notation of \cite{Wo} (see Chapter 5) we have
\begin{IEEEeqnarray*}{rCls}
    D_\sigma & = & {\mathcal P}_{0,\sigma} \cap \overline{{\mathcal P}_{0,\sigma}} \cap T(T^*G) \\
    & = & {\mathcal P}_{0,\sigma}^{{\frak t}^\perp} \\
    & = & \spn_{\R} \biggl\{ \frac{\partial}{\partial y_{r+1}},\dots, \frac{\partial}{\partial y_{n}}\biggr\},
\end{IEEEeqnarray*}
and
\begin{IEEEeqnarray*}{rCls}
    E_\sigma & = & ({\mathcal P}_{0,\sigma} \oplus \overline{{\mathcal P}_{0,\sigma}}) \cap T(T^*G) \\
    & = & \Sigma_\sigma \oplus D_\sigma.
\end{IEEEeqnarray*}
Note that $E_\sigma$ is involutive so that ${\mathcal P}_{0,\sigma}$ is strongly integrable in the sense of \cite{Wo}. Let $\tilde L_{(x,y)}$ be the leaf of 
$E_\sigma$ through $(x,y)\in T^*G$. Then,
$$
\tilde L_{(x,y)} = F_x \times \frak g.
$$
The (K\"ahler) leaves of $\Sigma_\sigma$ are then given by 
$$
L_{(x,y)} = \tilde L_{(x,y)} /D_\sigma,
$$ 
so that $L_{(x,y)}$ is the, so-called, coisotropic reduction of the  coisotropic submanifold $\tilde L_{(x,y)}\subset T^*G$. (See Section 5 of \cite{Wo}). Note that, in this case, $\Sigma_\sigma$ is also involutive.
\end{remark}

Let $J_{0,\sigma}^{L_{(x,y)}}$ be the complex structure on the leaf $L_{(x,y)}$, so that
$$
{\mathcal P}_{0,\sigma}^{\frak t} \vert_{L_{(x,y)}}= T^{(1,0)} \Bigl(L_{(x,y)},J_{0,\sigma}^{L_{(x,y)}}\Bigr).
$$

For $x_0\in F_x$ and $\sigma_2>0$, consider the diffeomorphism
\begin{eqnarray}\label{beta}
\beta_{x_0}^\sigma\colon L_{(x,y)}=F_x\times \{y+\frak t\}&\to& T_{\C}\\ \nonumber
(x,y+a)&\mapsto& te^{\sigma Fa},
\end{eqnarray}
where $x= x_o\cdot t$, $t\in T$, $a\in \frak t$.
Note that if $x_0^{'}=x_0\cdot t^{'}\in F_x$ we have that $\beta_{x_0}^\sigma$ and $\beta_{x_0^{'}}^\sigma$ are related by a translation by $t^{'}\in T$ in $T_{\C}$.

We then have, as an analog of Theorem \ref{super},

\begin{proposition}\label{partialpullback}The complex structure $J_{0,\sigma}^{L_{(x,y)}}$ on $L_{(x,y)}$ is the pull-back of the standard complex 
structure $J_{st}^{T_{\C}}$ on $T_{\C}$ by $\beta_{x_0}^\sigma$. (Note that $T$-invariance of $J_{st}^{T_{\C}}$ ensures that the choice of $x_0\in F_x$ is irrelevant.) This defines an holomorphic action of $T_{\C}$ on $(L_{(x,y)},J_{0,\sigma}^{L_{(x,y)}})$.
\end{proposition}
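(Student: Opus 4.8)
The plan is to run, on a single leaf, the argument of Theorem~\ref{super}: here $L_{(x,y)}\cong T\times\frak t\cong T^*T$ plays the role of ``$T^*G$'', the flow of $f|_{\frak t}$ plays the role of the complexifier flow, and $\sigma$ plays there the role that $\tau$ played before. By construction $J_{0,\sigma}^{L_{(x,y)}}$ is the complex structure whose $(1,0)$--distribution is ${\mathcal P}_{0,\sigma}^{\frak t}$, and $\beta_{x_0}^\sigma$ in~(\ref{beta}) is a diffeomorphism; since a diffeomorphism that carries the $(1,0)$--space of one almost complex structure onto that of another necessarily intertwines the two structures, it is enough to show
\begin{equation*}
D\beta_{x_0}^\sigma\bigl({\mathcal P}_{0,\sigma}^{\frak t}\bigr)=T^{(1,0)}T_{\C}.
\end{equation*}
Granting this, $\beta_{x_0}^\sigma$ is a biholomorphism onto $(T_{\C},J_{st}^{T_{\C}})$, so transporting through it the left--multiplication action of $T_{\C}$ on itself yields a holomorphic $T_{\C}$--action on $(L_{(x,y)},J_{0,\sigma}^{L_{(x,y)}})$; and since, by the observation preceding the statement, a different choice of $x_0$ replaces $\beta_{x_0}^\sigma$ by its composition with a translation of $T_{\C}$ --- which is holomorphic and, $T_{\C}$ being abelian, commutes with the left action --- neither the action nor the pulled--back complex structure depends on $x_0$. (Alternatively, $A_{0,\sigma}$ restricted to $L_{(x,y)}$ differs from $\beta_{x_0}^\sigma$ only by the left--translation $z\mapsto x_0e^{\sigma Fy}z$ of $G_{\C}$, which carries $T_{\C}$ biholomorphically onto the complex coset $x_0T_{\C}$; this gives a route parallel to the $\sigma=0$ case of Theorem~\ref{super}.)

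To establish the displayed identity, I compute $D\beta_{x_0}^\sigma$ directly. Identify $TL_{(x,y)}\cong\frak t\oplus\frak t$, the first summand tangent to $F_x=x_0T$ (via left--translation) and the second tangent to the affine fibre $\{y+\frak t\}$, and $TT_{\C}\cong\frak t_{\C}=\frak t\oplus\frak t$ by left--translation, with the first $\frak t$ the ``angular'' directions of $T\subset T_{\C}$ and the second the ``radial'' ones, so that pointwise $T^{(1,0)}T_{\C}=\spn_{\C}\{(T_k,-iT_k)\mid k=1,\dots,r\}$. Since $\frak t$ is abelian and $F$ maps into $\frak t$, differentiating
\begin{equation*}
s\longmapsto\beta_{x_0}^\sigma\bigl(xe^{sA},\,y+a+sB\bigr)=te^{sA}\,e^{\sigma F(a+sB)}=\bigl(te^{\sigma Fa}\bigr)\,e^{s(A+\sigma FB)},\qquad x=x_0t,
\end{equation*}
at $s=0$ gives $D\beta_{x_0}^\sigma(A,B)=A+\sigma FB\in\frak t_{\C}$ for $A,B\in\frak t$, that is, with angular component $A+\sigma_1 FB$ and radial component $\sigma_2 FB$; this extends $\C$--linearly to the complexified tangent spaces. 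The substance of the argument --- the one thing that genuinely has to be checked rather than merely tracked --- is that the base directions $A$ feed \emph{only} the angular part of $T_{\C}$, whereas it is the imaginary part $\sigma_2$ of $\sigma$ that couples the fibre directions to the radial part.

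It then remains to test on the frame~(\ref{frame}) at $\tau=0$. Since $Fy\in\frak t$ gives $[Fy,T_j]=0$ and $FT_j\in\frak t$ gives $[y,FT_j]\in\frak t^{\perp}$, the $\frak t\oplus\frak t$--component of $E_j^{0,\sigma}$ is ${E_j^{0,\sigma}}^{\frak t}=(-\bar\sigma FT_j,\,T_j)$, $j=1,\dots,r$, and these span ${\mathcal P}_{0,\sigma}^{\frak t}$. Applying $D\beta_{x_0}^\sigma$ and using $\sigma_1-\bar\sigma=i\sigma_2$,
\begin{equation*}
D\beta_{x_0}^\sigma\bigl({E_j^{0,\sigma}}^{\frak t}\bigr)=\bigl(-\bar\sigma FT_j+\sigma_1 FT_j,\ \sigma_2 FT_j\bigr)=\bigl(i\sigma_2 FT_j,\ \sigma_2 FT_j\bigr)=i\sigma_2\,\bigl(FT_j,\,-iFT_j\bigr)\in T^{(1,0)}T_{\C}.
\end{equation*}
Thus $D\beta_{x_0}^\sigma({\mathcal P}_{0,\sigma}^{\frak t})\subseteq T^{(1,0)}T_{\C}$, and since $\beta_{x_0}^\sigma$ is a diffeomorphism and both distributions have complex rank $r$, the inclusion is an equality, which is what was needed. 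The one genuinely delicate point I anticipate is exactly this final reconciliation: that the ``$i$'' that enters the frame of ${\mathcal P}_{0,\sigma}^{\frak t}$ through the analytic continuation $s\mapsto\bar\sigma$ of Proposition~\ref{polsts} matches, under $D\beta_{x_0}^\sigma$, the intrinsic complex unit of $T_{\C}$ carried inside $e^{\sigma Fa}$ --- the same bookkeeping of complex units that underlies the proof of Theorem~\ref{super}.
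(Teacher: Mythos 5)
Your proposal is correct and follows essentially the same route as the paper: compute $D\beta_{x_0}^\sigma$ in the angular/radial splitting of $\frak t_{\C}$ (obtaining the same matrix $\bigl(\begin{smallmatrix}\id & \sigma_1 F\\ 0 & \sigma_2 F\end{smallmatrix}\bigr)$), apply it to the frame ${E_j^{0,\sigma}}^{\frak t}=(-\bar\sigma FT_j,T_j)$, and identify the image $i\sigma_2(FT_j,-iFT_j)$ as a $+i$-eigenvector of $J_{st}^{T_{\C}}$. The additional remarks on the rank count and the transported $T_{\C}$-action are fine but do not change the substance.
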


\begin{proof}Let us take holomorphic coordinates for the standard complex structure $J_{st}^{T_{\C}}$ on $T_{\C}$,
$z_j = \theta_j + i y_j,$ with
$$
e^{\sum_{j=1}^r \theta_jT_j + i y_jT_j}\in T_{\C},
$$
and
$$
\frac{\partial}{\partial z_j} = \frac12 \left(\frac{\partial}{\partial \theta_j}- i \frac{\partial}{\partial y_j}\right), \quad j=1, \dots, r.
$$
In the basis $\Bigl\{\frac{\partial}{\partial \theta_j}, \frac{\partial}{\partial y_j}\Bigr\}_{j=1, \dots, r}$ we have
$$
D\beta_{x_0}^\sigma = \left[
\begin{array}{cc}
\id & \sigma_1 F \\
0 & \sigma_2 F
\end{array}\right],
$$
so that 
$$
D\beta_{x_0}^\sigma {E^{0,\sigma}_j}^{\frak t} = i\sigma_2(FT_j, -i FT_j ), \quad j=1, \dots, r.
$$
These are eigenvectors of $J_{st}^{T_{\C}}$ with eigenvalue $+i$ which proves the Proposition.
\end{proof}

Let then $z_{\sigma}^j, j=1,\dots, r,$ be holomorphic coordinates along the fibers $L_{x,y}$ obtained by pull-back via $\beta_{x_0}^\sigma$ of the standard holomorphic coordinates on $T_{\C}$, so that 
$$
{\mathcal P}_{0,\sigma}^{\frak t} = \spn_{\C} \biggl\{ \frac{\partial}{\partial z_\sigma^1}, \dots, \frac{\partial}{\partial z_\sigma^r} \biggr\}.
$$ 
>From (\ref{beta}) we have $dz_\sigma^j = w^j+\sigma \sum_{k=1}^r F_{jk} dy^k.$ Note that, for the symplectic forms along the K\"ahler leaves $L_{(x,y)}$ one has, denoting simply by $\partial$ the $\partial$-operator relative to $J_{0,\sigma}^{L_{(x,y)}}$,
\begin{IEEEeqnarray*}{rCls}
    \iota^*\omega & = & \sum_{k=1}^r w^k\wedge dy^k \\
    & = & i\partial\bar\partial 2\sigma_2\iota^*f \\
    & = & \sum_{j,k=1}^r \frac{i}{2\sigma_2} F^{-1}_{jk}dz_\sigma^j\wedge d\bar z_\sigma^k.
\end{IEEEeqnarray*}
In particular,
\begin{IEEEeqnarray*}{rCls}
    (\iota^*\omega)^r/r! & = & (-1)^{\frac{r(r-1)}{2}} w^1\wedge \cdots \wedge w^r \wedge dy^1 \wedge \cdots \wedge dy^r \\
    & = & (-1)^{\frac{r(r-1)}{2}} \left(\frac{i}{2\sigma_2}\right)^r
\det{F^{-1}} dz_\sigma^1 \wedge \cdots \wedge d z_\sigma^r \wedge d\bar z_\sigma^1 \wedge \cdots \wedge d\bar z_\sigma^r.
\end{IEEEeqnarray*}

To end this Section, following Section 10.3 
in \cite{Wo}, let us determine the half-form correction for ${\mathcal P}_{0,\sigma}$.
Let $K_{D_\sigma}$ be the line bundle with trivializing frame 
\begin{IEEEeqnarray*}{rCls}
    \alpha_\sigma & = & w^1\wedge \cdots \wedge w^n \wedge dy_1\wedge \cdots \wedge dy_r \\
    & = & (-1)^{r(r-1)/2 + (n-r)r}(\iota^*\omega)^r \wedge w^{r+1} \wedge \cdots \wedge w^n,
\end{IEEEeqnarray*}
that is the line bundle with fibers given by the space of $(n+r)$-forms which are annihilated by $D_\sigma$. Let $K_{{\mathcal P}_{0,\sigma}}$ be the line bundle of $n$-forms annihilating 
$\bar {\mathcal P}_{0,\sigma}$, with trivializing frame given by
$$\Omega_{0,\sigma}=dz^1_\sigma\wedge\cdots\wedge dz_\sigma^r \wedge w^{r+1} \wedge \cdots \wedge w^n.$$
 
\begin{lemma}We have
\begin{IEEEeqnarray*}{rCls}
    \Omega_{0,\sigma} & = & e^{\sigma {\mathcal L}_{X_f}} w^1 \wedge \cdots \wedge w^n \\
    & = & dz^1_\sigma\wedge\cdots\wedge dz_\sigma^r \wedge w^{r+1} \wedge \cdots \wedge w^n.
\end{IEEEeqnarray*}
\end{lemma}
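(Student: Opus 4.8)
The plan is to prove the identity $e^{\sigma{\mathcal L}_{X_f}}(w^1\wedge\cdots\wedge w^n)=dz^1_\sigma\wedge\cdots\wedge dz^r_\sigma\wedge w^{r+1}\wedge\cdots\wedge w^n$; the second equality in the statement is then just the definition of $\Omega_{0,\sigma}$ recalled above. I would first do this for a real parameter $s$ in place of $\sigma$, where $e^{s{\mathcal L}_{X_f}}$ acting on forms is literally the pull-back $(\phi^s_{X_f})^*$ (since the flow of $X_f$ commutes with ${\mathcal L}_{X_f}$, one integrates $\frac{d}{ds}(\phi^s_{X_f})^*\alpha=(\phi^s_{X_f})^*{\mathcal L}_{X_f}\alpha$), and then pass to the analytic continuation $s\to\sigma$. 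Note also that $w^1\wedge\cdots\wedge w^n$ is the trivializing section of the canonical bundle $K_{{\mathcal P}_{0,0}}$ of the vertical polarization and that ${\mathcal P}_{0,s}=(\phi^{-s}_{X_f})_*{\mathcal P}_{0,0}$, so this pull-back is exactly the induced trivializing section of $K_{{\mathcal P}_{0,s}}$.

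The computation of the pull-back goes factor by factor. Ordering the orthonormal basis so that $T_1,\dots,T_r$ span $\ttt$, and using the tangent map $D\phi^s_{X_f}$ from Lemma \ref{derivatives}, whose first ($\g$-)component sends $(U,V)\in\g\oplus\g$ to $e^{-s\,\ad_{Fy}}U+sFV$, one gets $(\phi^s_{X_f})^*w^j|_{(x,y)}(U,V)=\langle e^{s\,\ad_{Fy}}T_j,U\rangle+s\langle FT_j,V\rangle$, using self-adjointness of $F$ and skew-adjointness of $\ad$ for the $\Ad$-invariant inner product. For $j\le r$ we have $\ad_{Fy}T_j=0$ (since $Fy\in\ttt$ and $\ttt$ is abelian) and $FT_j\in\ttt$, hence $(\phi^s_{X_f})^*w^j=w^j+s\sum_{k=1}^r F_{jk}\,dy^k$, which is exactly $dz^j_\sigma$ at $\sigma=s$ by the formula $dz^j_\sigma=w^j+\sigma\sum_k F_{jk}dy^k$ recorded above. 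For $j>r$ we have $FT_j=0$ and $e^{s\,\ad_{Fy}}T_j\in\ttt^\perp$ (as $\ad_{Fy}$ preserves $\ttt^\perp$), hence $(\phi^s_{X_f})^*w^j=\sum_{k=r+1}^n\langle T_k,e^{s\,\ad_{Fy}}T_j\rangle\,w^k$.

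Wedging all $n$ pull-backs, the first $r$ factors produce $dz^1_s\wedge\cdots\wedge dz^r_s$ and the last $n-r$ produce $\det\big(e^{s\,\ad_{Fy}}|_{\ttt^\perp}\big)\,w^{r+1}\wedge\cdots\wedge w^n$. The point is that this determinant is $1$: since $\ad_{Fy}$ kills $\ttt$ and maps $\ttt^\perp$ into itself, it is block-diagonal for $\g=\ttt\oplus\ttt^\perp$ with zero $\ttt$-block, so $\tr\big(\ad_{Fy}|_{\ttt^\perp}\big)=\tr\ad_{Fy}=0$ (the adjoint of any element is traceless, being skew-symmetric for $\langle\cdot,\cdot\rangle$), whence $\det\big(e^{s\,\ad_{Fy}}|_{\ttt^\perp}\big)=e^{s\,\tr(\ad_{Fy}|_{\ttt^\perp})}=1$. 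Thus $(\phi^s_{X_f})^*(w^1\wedge\cdots\wedge w^n)=dz^1_s\wedge\cdots\wedge dz^r_s\wedge w^{r+1}\wedge\cdots\wedge w^n$ for every $s\in\R$.

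To finish, note both sides are polynomial in the parameter: the right-hand side because each $dz^j_s$ is affine in $s$, and $e^{s{\mathcal L}_{X_f}}(w^1\wedge\cdots\wedge w^n)$ because ${\mathcal L}_{X_f}^m(w^1\wedge\cdots\wedge w^n)=\frac{d^m}{ds^m}\big|_{s=0}(\phi^s_{X_f})^*(w^1\wedge\cdots\wedge w^n)$ vanishes for $m>r$, so the exponential series truncates and realizes the intended analytic continuation. Equality for real $s$ hence forces equality for all $\sigma\in\C$, proving the Lemma. I expect the only slightly delicate points to be the adjoint/transpose bookkeeping in transporting $w^j$ through $D\phi^s_{X_f}$ and the determinant computation, which needs both that $\ttt$ is abelian and the $\Ad$-invariance of $\langle\cdot,\cdot\rangle$; everything else is a routine calculation.
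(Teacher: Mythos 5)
Your proposal is correct and follows essentially the same route as the paper's (very terse) proof: compute $e^{\sigma\mathcal{L}_{X_f}}w^l$ factor by factor, obtaining $dz^l_\sigma$ for $l\le r$ and a determinant factor $\det\bigl(e^{\pm\sigma(\ad_{Fy})|_{\ttt^\perp}}\bigr)=1$ on the $\ttt^\perp$ block; you merely supply the details the paper leaves as ``one verifies straightforwardly,'' plus a clean justification (polynomial truncation of the exponential series) for passing from real $s$ to complex $\sigma$.
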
 

\begin{proof}
One verifies straighforwardly that, for $l=1,\dots r$,
\begin{IEEEeqnarray*}{rCls}
    e^{\sigma{\mathcal L}_{X_f}} w^l & = & w^l + \sigma \sum_{k=1}^r F_{lk} dy^k \\
    & = & dz^l_\sigma,
\end{IEEEeqnarray*}
while for $l=r+1,\dots , n$ we have
\begin{IEEEeqnarray*}{rCls+x*}
    e^{\sigma{\mathcal L}_{X_f}} w^{r+1}\wedge \cdots \wedge w^n & = & \det \left(e^{-\sigma {(\ad_{Fy})}_{\vert_{{\frak t}^\perp}}}\right)w^{r+1}\wedge \cdots \wedge w^n \\
    & = & w^{r+1}\wedge \cdots \wedge w^n. & & \hfill\qedhere
\end{IEEEeqnarray*}
\end{proof}
 
We have
$$
\Omega_{0,\sigma} = \iota_{V_\sigma} \alpha_\sigma,
$$
where 
$$
V_\sigma = \left(\frac{2\sigma_2}{i}\right)^r (\det F) (-1)^{r^2+(n-r)r} \frac{\partial}{\partial \bar z_\sigma^1}\wedge \cdots \wedge \frac{\partial}{\partial \bar z_\sigma^r}. 
$$
The half-form measure for integration on $T^*K/D \cong G\times \{ i\frak t\}$ is given by (see Section 10.3 in \cite{Wo}),
$$
\vert\sqrt{\Omega_{0,\sigma}}\vert^2 = \left((-1)^{r(r-1)/2} \frac{\iota_{\bar V \wedge V}\omega^r}{r! (2\pi i)^r}\right)^\frac12 \alpha_\sigma.
$$
One then obtains, straighforwardly, 
\begin{proposition}\label{partialhalfform} We have 
$$
\vert\sqrt{\Omega_{0,\sigma}}\vert^2 = \pi^{-\frac{r}{2}} \sigma_2^{\frac{r}{2}} (\det F)^\frac12 \alpha_\sigma.
$$
\end{proposition}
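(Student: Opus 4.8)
The plan is to compute the half-form norm $\vert\sqrt{\Omega_{0,\sigma}}\vert^2$ directly from the formula recalled from Section 10.3 of \cite{Wo}, namely
$$
\vert\sqrt{\Omega_{0,\sigma}}\vert^2 = \left((-1)^{r(r-1)/2}\,\frac{\iota_{\bar V_\sigma\wedge V_\sigma}\omega^r}{r!\,(2\pi i)^r}\right)^{\frac12}\alpha_\sigma,
$$
using the explicit expression for $V_\sigma$ already displayed, together with the previously computed volume form of the K\"ahler leaves,
$$
(\iota^*\omega)^r/r! = (-1)^{r(r-1)/2}\left(\tfrac{i}{2\sigma_2}\right)^r \det F^{-1}\; dz_\sigma^1\wedge\cdots\wedge dz_\sigma^r\wedge d\bar z_\sigma^1\wedge\cdots\wedge d\bar z_\sigma^r.
$$
First I would observe that $V_\sigma$ is a section of $\wedge^r(\bar{\mathcal P}_{0,\sigma}^{\frak t})$, so contracting $\omega^r$ (which along the leaf directions is $\iota^*\omega$) with $\bar V_\sigma\wedge V_\sigma$ produces a multiple of $\alpha_\sigma$; all transverse ${\frak t}^\perp$-directions in $\omega^r$ pair off trivially against $V_\sigma$ and its conjugate, so only the top power of $\iota^*\omega$ along $L_{(x,y)}$ contributes.

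The key computation is then to pair the $2r$-vector $\bar V_\sigma\wedge V_\sigma$ against $(\iota^*\omega)^r/r!$. Writing $V_\sigma = C_\sigma\,\partial_{\bar z^1_\sigma}\wedge\cdots\wedge\partial_{\bar z^r_\sigma}$ with $C_\sigma = (2\sigma_2/i)^r(\det F)(-1)^{r^2+(n-r)r}$, we get $\bar V_\sigma\wedge V_\sigma = |C_\sigma|^2\,(-1)^{\epsilon}\,\partial_{z^1_\sigma}\wedge\cdots\wedge\partial_{z^r_\sigma}\wedge\partial_{\bar z^1_\sigma}\wedge\cdots\wedge\partial_{\bar z^r_\sigma}$ for an explicit sign $\epsilon$ coming from reordering (note $\bar C_\sigma = (-2\sigma_2/i)^r(\det F)(-1)^{r^2+(n-r)r}$ since $\sigma_2, \det F$ are real). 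Contracting this with the $\det F^{-1}$-multiple of $dz^1_\sigma\wedge\cdots\wedge d\bar z^r_\sigma$ yields, after tracking signs, a scalar proportional to $|C_\sigma|^2\det F^{-1}\cdot(i/2\sigma_2)^r$ times the remaining transverse form $w^{r+1}\wedge\cdots\wedge w^n$ assembled back into $\alpha_\sigma$ via the identity $\alpha_\sigma = (-1)^{r(r-1)/2+(n-r)r}(\iota^*\omega)^r\wedge w^{r+1}\wedge\cdots\wedge w^n$ recalled just above. Collecting constants: $|C_\sigma|^2 = (2\sigma_2)^{2r}(\det F)^2$ (the $i^{-r}\cdot i^{-r}$ and sign from $\bar C_\sigma$ combine into a definite real number), and multiplying by $\det F^{-1}$, by $(i/2\sigma_2)^r$, by $1/(2\pi i)^r$, and by the sign $(-1)^{r(r-1)/2}$, everything collapses: the powers of $2\sigma_2$ leave $\sigma_2^{r}$ (after the square root at the end $\sigma_2^{r/2}$), the powers of $\det F$ leave $\det F$ (after square root $(\det F)^{1/2}$), the factor $(2\pi i)^{-r}$ against the $i^r$'s leaves $\pi^{-r}$ (after square root $\pi^{-r/2}$), and all signs cancel so the quantity under the square root is manifestly positive. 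Taking the square root gives precisely
$$
\vert\sqrt{\Omega_{0,\sigma}}\vert^2 = \pi^{-\frac r2}\,\sigma_2^{\frac r2}\,(\det F)^{\frac12}\,\alpha_\sigma.
$$

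The main obstacle is purely bookkeeping: getting every sign right in the reordering of the wedge of holomorphic and antiholomorphic multivectors, in the interior products $\iota_{\bar V_\sigma\wedge V_\sigma}$, and in the relation between $\alpha_\sigma$ and $(\iota^*\omega)^r\wedge w^{r+1}\wedge\cdots\wedge w^n$ — the signs $(-1)^{r(r-1)/2}$, $(-1)^{r^2+(n-r)r}$, and $(-1)^{(n-r)r}$ must all conspire to cancel, which they do because the half-form measure is intrinsically a positive density (this is guaranteed abstractly by the K\"ahlericity along the leaves established in the preceding theorem, so the sign cancellation is a consistency check rather than a miracle). I would also remark that the factors of $i$ are handled cleanly by noting $\iota_{\bar V_\sigma\wedge V_\sigma}\omega^r$ is, up to the real positive constants, $(-1)^{r(r-1)/2}(2i)^r$ times $\det F^{-1}$ times $|C_\sigma|^2$ times $\alpha_\sigma/(\det F^{-1}\cdots)$ — the cleanest route is to first verify the scalar identity $\iota_{\partial_{\bar z}\wedge\partial_{z}}(dz\wedge d\bar z) = -1$ (or $+1$, depending on convention) in one complex variable and then take $r$-th exterior powers, reducing the whole computation to a single one-variable check plus multiplicativity. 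Since all steps are routine given Proposition \ref{partialpullback} and the displayed volume-form formula, the proof is a short direct evaluation, matching the paper's assertion that the result follows "straighforwardly."
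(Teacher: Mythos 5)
Your computation is correct and is exactly the evaluation the paper intends: the paper's ``proof'' consists only of the phrase ``one then obtains, straightforwardly,'' referring to the same substitution of the displayed $V_\sigma$ and $(\iota^*\omega)^r/r!$ into the half-form measure formula from \cite{Wo}. The constants collect as you say — $|C_\sigma|^2(\det F)^{-1}(2\sigma_2)^{-r}(2\pi)^{-r}=\sigma_2^r(\det F)\pi^{-r}$ under the square root — and your observation that positivity of the half-form density forces the sign cancellation is a sound way to dispose of the bookkeeping.
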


\section{Quantum theory}

\subsection{Geometric quantization of $T^*G$}

Let $L\to T^*G$ be the trivial complex line bundle equipped with the standard Hermitian structure and with the connection
$$
\nabla = d + i\theta,
$$
whose curvature is $-i\omega.$  The Hilbert space of quantum states that is produced by geometric quantization of $(T^*G,\omega)$ in the polarization ${\mathcal P}$ is then, roughly, given by the space of sections of $L$ covariantly constant along $\overline{\mathcal P}$. However, one must also take into account $L^2$-conditions and the so-called half-form correction.

Let ${\mathcal P}_{0,0}$ be the vertical, or Schr\"odinger, polarization of $T^*G$ given by the kernel of the differential of the canonical projection 
$T^*G\cong G\times \g\to G$. In this case, the space of half-form corrected quantum states is \cite{Ha2,FMMN1,FMMN2}
$$
{\mathcal H}_{{\mathcal P}_{0,0}}=\{ f\otimes \sqrt{dx},\, f\in L^2(G,dx)\},
$$
where $dx$ stands for the Haar measure and $\sqrt{dx}$ denotes the half-form \cite{Wo}.

For the K\"ahler polarizations,  ${\mathcal P}_{\tau,0}$, $\tau \in \C^+$, given by the holomorphic tangent space of $(T^*G,J_{\tau,0})$, where $J_{\tau,0}$ are the complex structures in Proposition \ref{cstau}, one obtains \cite{Ha2,KMN1}

\begin{IEEEeqnarray*}{rCls}
    {\mathcal H}_{{\mathcal P}_{\tau,0}} & = & \biggl\{ F(x e^{\tau u_h(y)}) e^{i\tau (\langle u_h(y), y\rangle -h(y))} \otimes \sqrt{\Omega_{\tau,0}} \biggm| F\colon G_{\C}\to {\C} \,\,\mbox{is holomorphic}, \\
    & & \hphantom{\biggl\{ F(x e^{\tau u_h(y)}) e^{i\tau (\langle u_h(y), y\rangle -h(y))} \otimes \sqrt{\Omega_{\tau,0}} \biggm| } \int_{G\times \g} \vert F\vert^2 \vert\Omega_{\tau,0}\vert^2 dx dy<\infty \biggr\}. \\
\end{IEEEeqnarray*}

Here, $\Omega_{\tau,0}$ is the pull-back by $\psi_\tau \circ \alpha_h$ of a non-vanishing (and therefore trivializing) 
left $G_{\C}$-invariant holomorphic section of the canonical bundle of $G_{\C}$ and $\sqrt{\Omega_{\tau,0}}$ denotes a choice of corresponding square root (see \cite{KMN1} for further discussion).

\subsection{Quantization with respect to ${\mathcal P}_{\tau,\sigma}$}

In this Section we consider $\tau \in {\C}^+, \sigma\in {\C}^+\cup {\R}$ and the K\"ahler polarizations ${\mathcal P}_{\tau,\sigma}$.

Let 
$$
{\mathcal H}_{{\mathcal P}_{\tau,\sigma}}= \overline{ \biggl\{s\otimes \sqrt{\Omega_{\tau,\sigma}} \biggm| s\in C^\infty(L),\ \nabla_{\overline{{\mathcal P}_{\tau,\sigma}}}s=0, \
\int_{T^*G} \bar s  s\vert\sqrt{\Omega_{\tau,\sigma}}\vert^2 \frac{\omega^n}{n!}<\infty \biggr\}}
$$
be the Hilbert space of ${\mathcal P}_{\tau,\sigma}$-polarized sections, where the closure is with respect to the inner product
$$
\langle s\otimes \sqrt{\Omega_{\tau,\sigma}}, s'\otimes \sqrt{\Omega_{\tau,\sigma}}\rangle_{\tau,\sigma}
=\int_{T^*G} \bar s s' \vert\sqrt{\Omega_{\tau,\sigma}}\vert^2 \frac{\omega^n}{n!}
.
$$ 
Recall that $C^\infty(L)=C^\infty(T^*G)\otimes {\C}$ since $L$ is the trivial line bundle.

\begin{theorem}
We have 
\begin{equation}\label{structure}
{\mathcal H}_{{\mathcal P}_{\tau,\sigma}}= \biggl\{ (\Phi\circ A_{\tau,\sigma}) e^{-i \lambda_{\tau,\sigma}}\otimes \sqrt{\Omega_{\tau,\sigma}} \biggm| \Phi\in {\mathcal O}(G_{\C}),\
\int_{T^*G} \vert \Phi\circ A_{\tau,\sigma}\vert^2 e^{-\kappa_{\tau,\sigma}}\vert\sqrt{\Omega_{\tau,\sigma}}\vert^2 \frac{\omega^n}{n!}<\infty \biggr\},
\end{equation}
where $\lambda_{\tau,\sigma}(x,y)= -\tau (\langle y, u_h(y)\rangle-h(y)) -\sigma f$.
\end{theorem}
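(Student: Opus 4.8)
The plan is to strip off a global phase so that the polarization condition becomes ordinary holomorphicity, and then to transport holomorphic functions from $G_{\C}$ through the biholomorphism $A_{\tau,\sigma}$. Since $L$ is the trivial bundle, a smooth section $s$ is just a function in $C^\infty(T^*G)\otimes\C$, and for $\bar Z\in\overline{{\mathcal P}_{\tau,\sigma}}$ one has $\nabla_{\bar Z}s=\bar Z(s)+i\,\theta(\bar Z)\,s$. First I would reuse the one-form identity established inside the proof of Theorem~\ref{ohyes}: $\theta$ restricted to ${\mathcal P}_{\tau,\sigma}$ equals $d\bar\lambda_{\tau,\sigma}$, with $\bar\lambda_{\tau,\sigma}=-\bar\tau(\langle y,u_h(y)\rangle-h(y))-\bar\sigma f$. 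Conjugating this and using that $\theta$ is real gives $\theta(\bar Z)=\bar Z(\lambda_{\tau,\sigma})$ for every $\bar Z\in\overline{{\mathcal P}_{\tau,\sigma}}$, where $\lambda_{\tau,\sigma}$ is exactly the function in the statement.

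Next I would substitute $s=u\,e^{-i\lambda_{\tau,\sigma}}$ with $u\in C^\infty(T^*G)\otimes\C$. A short computation, using the identity $\theta(\bar Z)=\bar Z(\lambda_{\tau,\sigma})$ on $\overline{{\mathcal P}_{\tau,\sigma}}$, yields $\nabla_{\bar Z}s=(\bar Z(u))\,e^{-i\lambda_{\tau,\sigma}}$, so that $\nabla_{\overline{{\mathcal P}_{\tau,\sigma}}}s=0$ is \emph{equivalent} to $u$ being annihilated by $\overline{{\mathcal P}_{\tau,\sigma}}=T^{(0,1)}(T^*G,J_{\tau,\sigma})$, i.e.\ to $u$ being holomorphic for $J_{\tau,\sigma}$. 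Now by definition $J_{\tau,\sigma}$ is the pull-back of the standard complex structure on $G_{\C}$ under $A_{\tau,\sigma}$, and $A_{\tau,\sigma}$ is a global diffeomorphism: by the Lemma stated just before Theorem~\ref{sure} when $\sigma\in\R$, and by Theorem~\ref{sure} when $\sigma\in\C^+$. Hence $A_{\tau,\sigma}\colon(T^*G,J_{\tau,\sigma})\stackrel{\cong}{\longrightarrow}G_{\C}$ is a biholomorphism, so $u$ is $J_{\tau,\sigma}$-holomorphic if and only if $u=\Phi\circ A_{\tau,\sigma}$ for a unique $\Phi\in{\mathcal O}(G_{\C})$; this is precisely the form of the sections appearing in (\ref{structure}).

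It remains to match the weights and dispose of the closure. Because $\langle y,u_h(y)\rangle-h(y)$ and $f$ are real-valued, $2\,{\rm Im}\,\lambda_{\tau,\sigma}=-2\tau_2(\langle y,u_h(y)\rangle-h(y))-2\sigma_2 f=-\kappa_{\tau,\sigma}$, so $\bar s s=|\Phi\circ A_{\tau,\sigma}|^2\,|e^{-i\lambda_{\tau,\sigma}}|^2=|\Phi\circ A_{\tau,\sigma}|^2\,e^{-\kappa_{\tau,\sigma}}$, and the integrability condition defining ${\mathcal H}_{{\mathcal P}_{\tau,\sigma}}$ becomes exactly the one written in (\ref{structure}). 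This already shows that the set of smooth, ${\mathcal P}_{\tau,\sigma}$-polarized, $L^2$ sections coincides with the right-hand side of (\ref{structure}). Finally, that set is the image of the weighted space $\{\Phi\in{\mathcal O}(G_{\C}):\int_{T^*G}|\Phi\circ A_{\tau,\sigma}|^2e^{-\kappa_{\tau,\sigma}}|\sqrt{\Omega_{\tau,\sigma}}|^2\,\omega^n/n!<\infty\}$ under the fixed isometry $\Phi\mapsto(\Phi\circ A_{\tau,\sigma})e^{-i\lambda_{\tau,\sigma}}\otimes\sqrt{\Omega_{\tau,\sigma}}$, and such a weighted Bergman-type space of holomorphic functions is complete, hence already closed in $L^2$; so the closure in the definition adds nothing.

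The substantive inputs are Theorems~\ref{sure} and~\ref{ohyes}: the former ensures $A_{\tau,\sigma}$ is a \emph{global} biholomorphism, so that every $J_{\tau,\sigma}$-holomorphic function is genuinely a pull-back from $G_{\C}$ (a purely local statement would not suffice), while the latter supplies the global primitive $\lambda_{\tau,\sigma}$ of $\theta$ along $\overline{{\mathcal P}_{\tau,\sigma}}$. I expect the only delicate point beyond invoking these to be the $(\tau,\sigma)\leftrightarrow(\bar\tau,\bar\sigma)$ conjugation bookkeeping needed to land on precisely the stated $\lambda_{\tau,\sigma}$, together with the sign check identifying $e^{2\,{\rm Im}\,\lambda_{\tau,\sigma}}$ with $e^{-\kappa_{\tau,\sigma}}$; everything else is routine.
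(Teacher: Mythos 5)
Your proof follows essentially the same route as the paper's: you use the identity $\theta(\bar E_j^{\tau,\sigma})=\bar E_j^{\tau,\sigma}(\lambda_{\tau,\sigma})$ extracted from the proof of Theorem~\ref{ohyes} to reduce covariant constancy along $\overline{{\mathcal P}_{\tau,\sigma}}$ to $J_{\tau,\sigma}$-holomorphicity of the prefactor, and then invoke Theorems~\ref{sure} and~\ref{super} to identify such functions with pullbacks $A_{\tau,\sigma}^*\Phi$, $\Phi\in{\mathcal O}(G_{\C})$. Your extra verifications (that $\vert e^{-i\lambda_{\tau,\sigma}}\vert^2=e^{-\kappa_{\tau,\sigma}}$ and that the resulting weighted Bergman-type space is already closed, so the closure in the definition of ${\mathcal H}_{{\mathcal P}_{\tau,\sigma}}$ adds nothing) are correct and merely make explicit details the paper leaves implicit.
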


\begin{proof}Let $\varphi \in C^\infty(T^*G)\otimes {\C}$ and suppose that 
$$
\nabla_{\bar E^{\tau,\sigma}_j}\, \left(\varphi e^{-i \lambda_{\tau,\sigma}}\right) =0.
$$
Since, from the proof of Theorem \ref{ohyes}, 
$$
\bar E^{\tau,\sigma}_j \lambda_{\tau,\sigma} = \theta(\bar E^{\tau,\sigma}_j), 
$$
we have
$$
(\bar E^{\tau,\sigma}_j + i\theta (\bar E^{\tau,\sigma}_j) )\left(\Phi e^{-i \lambda_{\tau,\sigma}}\right) =0 \Leftrightarrow \bar E^{\tau,\sigma}_j(\varphi)=0,
$$
so that $\varphi$ is $J_{\tau,\sigma}$-holomorphic, which, by Theorems \ref{sure} and \ref{super}, is equivalent to $\varphi = A_{\tau,\sigma}^* \Phi$, for some 
$\Phi\in {\mathcal O(G_{\C})}$. 
\end{proof}

Recall that one has the Peter-Weyl decomposition
\begin{equation}\label{peterweyl}
{\mathcal H}_{{\mathcal P}_{0,0}} = \overline{\oplus_{\lambda \in \hat G} V^{\lambda}_{0,0}},
\end{equation}
where $\hat G$ denotes the set of equivalence classes of irreducible representations fo $G$ and 
$$
V^{\lambda}_{0,0} = \bigl\{\pi^\lambda_{ij}(x)\otimes \sqrt{dx} \bigm| \lambda\in \hat G, i,j = 1,\dots, \dim \lambda \bigr\},
$$
where $\pi^\lambda_{ij}$ denotes the matrix elements for the irreducible representation with highest weight $\lambda$ and $\sqrt{\Omega_{0,0}}=\sqrt{dx}=\sqrt{w^1\wedge\cdots\wedge w^n}$ is the half-form correction for ${\mathcal P}_{0,0}$.

An holomorphic function $\Phi\in {\mathcal O}(G_{\C})$ is known to be given by an ``holomorphic Fourier series'' determined by the Peter-Weyl expansion of its restriction to $G$,
$$
\Phi (g) = \sum_{\lambda\in \hat G} \sum_{i,j=1}^{\dim \lambda}a_{ij}^\lambda \pi_{ij}^\lambda(g),\quad g\in G_{\C}, a_{ij}\in {\C},
$$ 
where $\pi^\lambda_{ij}$ also denote the matrix elements for the holomorphic representation of $G_{\C}$ with highest weight $\lambda$. 
(See Section 8 of \cite{Ha1}.)

\begin{proposition}\label{matrixelementisl2}
Let $\lambda\in \hat G$, $i,j=1,\dots,\dim \lambda$. Then,
$$
\pi^\lambda_{ij} \circ A_{\tau,\sigma} e^{-i\lambda_{\tau,\sigma}}\otimes \sqrt{\Omega_{\tau,\sigma}} \in {\mathcal H}_{{\mathcal P}_{\tau,\sigma}}.
$$
\end{proposition}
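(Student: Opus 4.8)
The plan is to deduce the statement directly from the structure theorem \eqref{structure}. The matrix elements $\pi^\lambda_{ij}$ of the holomorphic representation of $G_{\C}$ with highest weight $\lambda$ extend holomorphically to all of $G_{\C}$ (cf.\ Section 8 of \cite{Ha1}), so $\pi^\lambda_{ij}\in\mathcal{O}(G_{\C})$ and, by \eqref{structure}, the only thing left to verify is the integrability
$$
I:=\int_{T^*G}\bigl|\pi^\lambda_{ij}\circ A_{\tau,\sigma}\bigr|^2\,e^{-\kappa_{\tau,\sigma}}\,\bigl|\sqrt{\Omega_{\tau,\sigma}}\bigr|^2\,\frac{\omega^n}{n!}\,<\,\infty.
$$
Here $\omega^n/n!$ equals, up to sign, the product of Haar measure $dx$ on $G$ and Lebesgue measure $dy$ on $\g$, while by Theorem \ref{ohyes} one has $\kappa_{\tau,\sigma}(x,y)=2\tau_2\,\Phi_h(y)+2\sigma_2 f(y)$ with $\Phi_h(y):=\langle y,u_h(y)\rangle-h(y)$ the Legendre transform of $h$; since the $G$-integration runs over a compact group, $I<\infty$ reduces to the convergence of a concrete integral over $\g$.

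For the integrand I would use two pointwise estimates. First, writing $e^{\tau u_h(y)}=e^{\tau_1 u_h(y)}e^{i\tau_2 u_h(y)}$ and $e^{\sigma Fy}=e^{\sigma_1 Fy}e^{i\sigma_2 Fy}$, and noting that $x\,e^{\tau_1 u_h(y)}$ and $e^{\sigma_1 Fy}$ lie in $G$ (recall $Fy\in\ttt\subset\g$), the unitarity of $\pi^\lambda$ on $G$, submultiplicativity of the operator norm, and the standard bound $\|\pi^\lambda(\exp iX)\|\le e^{C_\lambda\|X\|}$ for $X\in\g$ (with $C_\lambda$ controlled by the weights of $\pi^\lambda$) give
$$
\bigl|\pi^\lambda_{ij}(A_{\tau,\sigma}(x,y))\bigr|\le\bigl\|\pi^\lambda(e^{i\tau_2 u_h(y)})\bigr\|\,\bigl\|\pi^\lambda(e^{i\sigma_2 Fy})\bigr\|\le e^{C_\lambda(\tau_2\|u_h(y)\|+\sigma_2\|Fy\|)}.
$$
Second, Corollary \ref{half-formgrowth} gives $\bigl|\sqrt{\Omega_{\tau,\sigma}}\bigr|^2\le c_0 e^{c_1\|y\|}$. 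Combining and using compactness of $G$,
$$
I\le c_0\,\mathrm{vol}(G)\int_{\g}e^{2C_\lambda\tau_2\|u_h(y)\|}\,e^{2C_\lambda\sigma_2\|Fy\|}\,e^{c_1\|y\|}\,e^{-2\tau_2\Phi_h(y)}\,e^{-2\sigma_2 f(y)}\,dy.
$$

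It then remains to see that this last integral converges. Decompose $y=y^{\ttt}+y^{\ttt^\perp}$; since $\image F\subset\ttt$ and $F|_{\ttt^\perp}=0$ one has $Fy=Fy^{\ttt}$ and $f(y)=\tfrac12\langle y^{\ttt},Fy^{\ttt}\rangle\ge\epsilon_F\|y^{\ttt}\|^2$, with $\epsilon_F>0$ the least eigenvalue of $F|_{\ttt}$, so the factor $e^{2C_\lambda\sigma_2\|Fy\|}e^{-2\sigma_2 f(y)}\le e^{2C_\lambda\sigma_2\|F\|\,\|y^{\ttt}\|-2\sigma_2\epsilon_F\|y^{\ttt}\|^2}$ is bounded on all of $\g$ (and is $\equiv 1$ when $\sigma\in\R$). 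What is left is $\int_{\g}e^{2C_\lambda\tau_2\|u_h(y)\|+c_1\|y\|}\,e^{-2\tau_2\Phi_h(y)}\,dy$, which converges for $\tau\in\C^+$ because the convexity of $h$ together with the assumed positive lower bound on $\|H_h\|$ force $\Phi_h$ to grow at least quadratically in $\|y\|$, hence strictly faster than $\|u_h(y)\|$; this is exactly the estimate underlying the $\sigma=0$ case treated in \cite{KMN1}, and it makes $e^{-2\tau_2\Phi_h(y)}$ dominate every exponential-in-$\|y\|$ factor. This gives $I<\infty$ and hence the proposition.

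I expect the genuine content of the argument to be entirely the at-least-quadratic growth of $\Phi_h$, which I would borrow wholesale from the $\sigma=0$ theory of \cite{KMN1}: all the genuinely new $\sigma$-dependent contributions — the extra growth $C_\lambda\sigma_2\|Fy\|$ of the matrix element, the weight $e^{-2\sigma_2 f}$, and the $\sigma$-dependent determinant factor appearing in $|\sqrt{\Omega_{\tau,\sigma}}|^2$ before Corollary \ref{half-formgrowth} — live only in the $\ttt$-directions, where $f$ is a positive-definite quadratic form and therefore supplies Gaussian decay that comfortably beats the linear growth of $\|Fy\|$. Thus the $\sigma$-direction is the ``easy'' one, and I anticipate no real obstacle beyond correctly invoking \cite{KMN1} and Corollary \ref{half-formgrowth}.
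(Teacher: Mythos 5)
Your argument is correct and follows essentially the same route as the paper: the paper's proof likewise reduces everything to the Gaussian decay of $e^{-\kappa_{\tau,\sigma}}$ coming from (\ref{kahlerpot}), the growth bound of Corollary \ref{half-formgrowth} on the half-form, and the estimates from the proof of Theorem 4.6 in \cite{KMN1} for the $\tau$-directions. You have merely written out explicitly the pointwise bound on $\vert\pi^\lambda_{ij}\circ A_{\tau,\sigma}\vert$ and the splitting into $\ttt$- and $\ttt^{\perp}$-directions that the paper leaves implicit in its citation of \cite{KMN1}.
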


\begin{proof}It only remains to show that the $L^2$ condition in (\ref{structure}) is satisfied. Equation (\ref{kahlerpot}) ensures, as in the 
the proof of Theorem 4.6 in \cite{KMN1}, that the factor
$e^{-\kappa_{\tau,\sigma}}$ decays at least like a Gaussian along the imaginary directions in the Lie algebra. The proof of Thereom 4.6 in \cite{KMN1} 
and Corollary \ref{half-formgrowth} then ensure that the integral giving $\vert\vert \pi^\lambda_{ij} \circ A_{\tau,\sigma} e^{-i\lambda_{\tau,\sigma}}\otimes \sqrt{\Omega_{\tau,\sigma}}\vert\vert_{\tau,\sigma}^2$ is convergent.
\end{proof}

Therefore, we also have the decomposition
\begin{equation}\label{decomptausigma}
{\mathcal H}_{{\mathcal P}_{\tau,\sigma}} = \overline{\oplus_{\lambda \in \hat G} V^{\lambda}_{\tau,\sigma}},
\end{equation} 
where
\begin{equation*}
V^{\lambda}_{\tau,\sigma} = \bigl\{\pi^\lambda_{ij}(xe^{\tau u(y)}e^{\sigma Fy}) e^{-i \lambda_{\tau,\sigma}}\otimes \sqrt{\Omega_{\tau,\sigma}} \bigm|  i,j = 1,\dots, \dim \lambda\bigr\}.
\end{equation*}

\subsection{Quantization with respect to ${\mathcal P}_{0,\sigma}$}

In this Section we consider $\tau=0, \sigma_2>0$ and the mixed  polarizations ${\mathcal P}_{0,\sigma}$.

Let 
$$
{\mathcal H}_{{\mathcal P}_{0,\sigma}}= \overline{ \biggl\{s\otimes \sqrt{\Omega_{0,\sigma}} \biggm| s\in C^\infty(L),\ \nabla_{\overline{{\mathcal P}_{0,\sigma}}}s=0, \
\int_{G\times \{i\frak t\}} \bar s  s\vert\sqrt{\Omega_{0,\sigma}}\vert^2 <\infty \biggr\}}
$$
be the Hilbert space of ${\mathcal P}_{0,\sigma}$-polarized sections, where the closure is with respect to the inner product
$$
\langle s\otimes \sqrt{\Omega_{0,\sigma}}, s'\otimes \sqrt{\Omega_{0,\sigma}}\rangle_{0,\sigma}
=\int_{G\times \{i\frak t\}} \bar s s' \vert\sqrt{\Omega_{0,\sigma}}\vert^2,
$$ 
and where $\vert\sqrt{\Omega_{0,\sigma}}\vert^2$ is given in Proposition \ref{partialhalfform}.
Recall, again, that
\begin{equation*}
    C^\infty(L)=C^\infty(T^*G)\otimes {\C}
\end{equation*}
since $L$ is the trivial line bundle.

\begin{theorem}
We have 
\begin{equation}\label{structure2}
{\mathcal H}_{{\mathcal P}_{0,\sigma}}= \overline{ \biggl\{ \phi e^{-i \lambda_{0,\sigma}}\otimes \sqrt{\Omega_{0,\sigma}} \biggm| \phi\in C^\infty({{\mathcal P}_{0,\sigma}}),\
\int_{G\times i{\frak t}} \vert \phi \vert^2 e^{-\kappa_{0,\sigma}}\vert\sqrt{\Omega_{0,\sigma}}\vert^2 <\infty \biggr\}},
\end{equation}
where $\lambda_{0,\sigma}(x,y)= -\sigma f(y)$ and $C^\infty({{\mathcal P}_{0,\sigma}})$ stands for the space of ${\mathcal P}_{0,\sigma}$-polarized 
smooth functions on $T^*G$.
\end{theorem}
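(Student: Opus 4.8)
The plan is to mimic the proof of the analogous Theorem for the K\"ahler case (the theorem labelled \texttt{structure} above), adapting it to the fact that ${\mathcal P}_{0,\sigma}$ is only a mixed, not fully K\"ahler, polarization. First I would take an arbitrary smooth section $s\otimes\sqrt{\Omega_{0,\sigma}}$ of $L$ satisfying $\nabla_{\overline{{\mathcal P}_{0,\sigma}}}s=0$, write $s=\varphi\, e^{-i\lambda_{0,\sigma}}$ with $\varphi\in C^\infty(T^*G)\otimes{\C}$ (legitimate since $L$ is trivial and $e^{-i\lambda_{0,\sigma}}$ is nowhere vanishing), and compute the covariant constancy condition explicitly. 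Since $\nabla = d + i\theta$, one has for any vector field $Z$ that $\nabla_{\bar Z}(\varphi e^{-i\lambda_{0,\sigma}}) = \bigl(\bar Z\varphi - i(\bar Z\lambda_{0,\sigma})\varphi + i\theta(\bar Z)\varphi\bigr)e^{-i\lambda_{0,\sigma}}$. The key computational input, exactly parallel to the proof of Theorem \ref{ohyes}, is that $\lambda_{0,\sigma}$ is (the relevant piece of) a potential for $\theta$ along $\overline{{\mathcal P}_{0,\sigma}}$: using $\iota^*\theta({E^{0,\sigma}_j}^{\frak t}) = -\bar\sigma\, df({E^{0,\sigma}_j}^{\frak t})$ together with $\theta$ vanishing on the ${\frak t}^\perp$-directions $\partial/\partial y^k$, one checks $\bar E^{0,\sigma}_j \lambda_{0,\sigma} = \theta(\bar E^{0,\sigma}_j)$ for $j=1,\dots,n$, i.e.\ on a frame spanning $\overline{{\mathcal P}_{0,\sigma}}$. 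Hence the condition $\nabla_{\overline{{\mathcal P}_{0,\sigma}}}s=0$ collapses to $\bar E^{0,\sigma}_j\varphi=0$ for all $j$, which says precisely that $\varphi\in C^\infty({\mathcal P}_{0,\sigma})$, the space of ${\mathcal P}_{0,\sigma}$-polarized functions.

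Next I would run the converse direction: given $\phi\in C^\infty({\mathcal P}_{0,\sigma})$ with the displayed $L^2$ bound, the section $\phi\, e^{-i\lambda_{0,\sigma}}\otimes\sqrt{\Omega_{0,\sigma}}$ is ${\mathcal P}_{0,\sigma}$-polarized by the same identity read backwards, and its norm is $\int_{G\times i{\frak t}}|\phi|^2 e^{-2\,\mathrm{Im}\,\lambda_{0,\sigma}}|\sqrt{\Omega_{0,\sigma}}|^2$. I would then identify $2\,\mathrm{Im}\,\lambda_{0,\sigma}$ with $\kappa_{0,\sigma}=2\sigma_2 f$ (consistent with $\lambda_{0,\sigma}=-\sigma f$, so $\mathrm{Im}\,\lambda_{0,\sigma}=-\sigma_2 f$ — here one must be careful about the sign convention and whether the integrand involves $e^{-\kappa_{0,\sigma}}$ as written; I would match conventions with the $\tau\neq 0$ case and with equation \eqref{normhalf}, possibly absorbing a sign into the definition of $\lambda_{0,\sigma}$). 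This shows the right-hand set of \eqref{structure2} is contained in ${\mathcal H}_{{\mathcal P}_{0,\sigma}}$ before closure, and the two descriptions of the pre-Hilbert space coincide; taking closures gives the asserted equality.

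The main obstacle I anticipate is not the algebra of the polarization condition — that is a routine adaptation of Theorem \ref{ohyes} — but rather making the $L^2$/measure bookkeeping genuinely correct for a \emph{mixed} polarization. Unlike the K\"ahler case, the integration here is over the reduced space $G\times\{i{\frak t}\}$ rather than all of $T^*G$, using the half-form density $|\sqrt{\Omega_{0,\sigma}}|^2$ from Proposition \ref{partialhalfform} which lives on $T^*K/D_\sigma$; I would need to invoke the reduction picture (the leaves $\tilde L_{(x,y)}=F_x\times{\frak g}$, with $L_{(x,y)}=\tilde L_{(x,y)}/D_\sigma$) to justify that polarized sections are determined by their restriction to $G\times i{\frak t}$ and that the displayed integral is the correct norm. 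A secondary subtlety is that I must verify the frame $\{\bar E^{0,\sigma}_j\}_{j=1,\dots,n}$ (equivalently the union of a frame for $\overline{{\mathcal P}_{0,\sigma}^{\frak t}}$ and the real directions $\partial/\partial y^k$, $k>r$, which equal their own conjugates) indeed spans $\overline{{\mathcal P}_{0,\sigma}}$, so that annihilation by this frame is equivalent to being ${\mathcal P}_{0,\sigma}$-polarized; this uses Lemma \ref{invertible} and the block decomposition ${\mathcal P}_{0,\sigma}={\mathcal P}_{0,\sigma}^{\frak t}\oplus{\mathcal P}_{0,\sigma}^{{\frak t}^\perp}$ established earlier. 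Modulo these points, the argument is a direct transcription of the proof of the K\"ahler analogue with $\tau$ set to $0$.
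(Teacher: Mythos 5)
Your proposal follows essentially the same route as the paper: the paper's proof also writes the section as $\phi\, e^{-i\lambda_{0,\sigma}}$, invokes the identity $\bar E^{0,\sigma}_j \lambda_{0,\sigma} = \theta(\bar E^{0,\sigma}_j)$ from the proof of Theorem \ref{ohyes} to reduce covariant constancy along $\overline{{\mathcal P}_{0,\sigma}^{\frak t}}$ to $\bar E^{0,\sigma}_j(\phi)=0$, and separately uses $\theta(\partial/\partial y_j)=0$ for $j=r+1,\dots,n$ to handle the real directions, exactly as you describe. Your additional bookkeeping on the norm identification $|e^{-i\lambda_{0,\sigma}}|^2=e^{-\kappa_{0,\sigma}}$ and on the frame spanning $\overline{{\mathcal P}_{0,\sigma}}$ is consistent with the paper's conventions and merely makes explicit what the paper leaves implicit.
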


\begin{proof}Let $\phi \in C^\infty(T^*G)\otimes {\C}$ and suppose that 
$$
\nabla_{\bar E^{0,\sigma}_j}\, \left(\phi e^{-i \lambda_{0,\sigma}}\right) =0.
$$
Since, from the proof of Theorem \ref{ohyes}, 
$$
\bar E^{0,\sigma}_j \lambda_{0,\sigma} = \theta(\bar E^{0,\sigma}_j), 
$$
we have
$$
(\bar E^{0,\sigma}_j + i\theta (\bar E^{0,\sigma}_j) )\left(\phi e^{-i \lambda_{0,\sigma}}\right) =0 \Leftrightarrow \bar E^{0,\sigma}_j(\phi)=0.
$$
On the other hand, since $\theta(\frac{\partial}{\partial y_j})=0$, 
$$
\nabla_{\frac{\partial}{\partial y_j}} \,\phi e^{-i \lambda_{0,\sigma}} =0 \Leftrightarrow  \frac{\partial}{\partial y_j} \left(\phi e^{-i \lambda_{0,\sigma}}\right) =0 \Leftrightarrow  \frac{\partial}{\partial y_j} \left(\phi\right) =0,\quad j=r+1, \dots n,
$$
so that $\phi\in C^\infty({\mathcal P}_{0,\sigma})$.
\end{proof}

\begin{proposition}Let $\phi \in C^\infty({\mathcal P}_{0,\sigma})$. Then, $\phi$ has an expansion
$$
\phi(x,y) = \sum_{\lambda\in \hat G} \sum_{i,j=1}^{\dim\,\lambda}a_{ij}^\lambda \pi^\lambda_{ij} (x e^{\sigma Fy}).
$$
\end{proposition}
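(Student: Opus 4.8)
The plan is to convert the differential condition defining $C^\infty({\mathcal P}_{0,\sigma})$ into a first‑order linear system for the Peter--Weyl coefficients of $\phi$ in the group variable, and then to integrate that system in closed form.

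First I would make the polarization condition explicit. As in the proof of the preceding theorem, the real ${\frak t}^\perp$–part of $\overline{{\mathcal P}_{0,\sigma}}$ forces $\partial\phi/\partial y^j=0$ for $j=r+1,\dots,n$, so $\phi$ depends only on $x\in G$ and on $a:=y^{\frak t}\in{\frak t}$. For the K\"ahler directions I would evaluate the frame $(\ref{frame})$ at $\tau=0$: since then $M^{0,\sigma}=-\bar\sigma F$, $\ad_{Fy}$ annihilates ${\frak t}$, and $\ad_y\circ F$ maps ${\frak g}$ into ${\frak t}^\perp$, one gets $\overline{E^{0,\sigma}_j}\big|_{(x,y)}=(-\sigma FT_j,\ T_j-\sigma[y,FT_j])$ for an orthonormal basis $T_1,\dots,T_r$ of ${\frak t}$. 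Because $\phi$ is independent of $y^{{\frak t}^\perp}$ and $[y,FT_j]\in{\frak t}^\perp$, the equations $\nabla_{\overline{E^{0,\sigma}_j}}(\phi\,e^{-i\lambda_{0,\sigma}})=0$ collapse (using $\overline{E^{0,\sigma}_j}\lambda_{0,\sigma}=\theta(\overline{E^{0,\sigma}_j})$, as in the proof of Theorem \ref{ohyes}) to the Cauchy--Riemann system
$$\frac{\partial\phi}{\partial y^j}=\sigma\,X_{FT_j}\phi,\qquad j=1,\dots,r,$$
where $X_{FT_j}$ denotes the left‑invariant vector field on $G$ with value $FT_j$ at the identity. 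Carrying out this reduction carefully --- in particular tracking $\sigma$ against $\bar\sigma$ --- is the main bookkeeping step, but it is routine given the formulas of Section \ref{newpol}. (One may also verify directly that the asserted right‑hand side is ${\mathcal P}_{0,\sigma}$–polarized, so that the expansion is at least well posed.)

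Next I would expand in $x$. For each fixed $a$, Peter--Weyl gives $\phi(x,a)=\sum_{\lambda\in\hat G}\sum_{k,l}c^\lambda_{kl}(a)\pi^\lambda_{kl}(x)$ with $c^\lambda_{kl}(a)=(\dim\lambda)\int_G\phi(x,a)\overline{\pi^\lambda_{kl}(x)}\,dx$; smoothness of $\phi$ makes the $c^\lambda_{kl}$ smooth in $a$ and makes the series converge, together with all its $x$– and $a$–derivatives, absolutely and uniformly on compacta, so it may be differentiated term by term. Substituting into the Cauchy--Riemann system, using $X_{FT_j}\pi^\lambda_{kl}=\sum_m\pi^\lambda_{km}[d\pi^\lambda(FT_j)]_{ml}$ and the linear independence of matrix elements, I obtain for the matrices $C^\lambda(a)=(c^\lambda_{kl}(a))$ and $D^\lambda_j=d\pi^\lambda(FT_j)$ the constant‑coefficient system $\partial_{a^j}C^\lambda=\sigma\,C^\lambda(D^\lambda_j)^\top$. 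Since $FT_1,\dots,FT_r\in{\frak t}$ commute, so do the $D^\lambda_j$, and the unique solution is $C^\lambda(a)=C^\lambda(0)\,\pi^\lambda(e^{\sigma Fa})^\top$. Reassembling, $\phi(x,a)=\sum_\lambda\tr\big(C^\lambda(0)^\top\pi^\lambda(x)\pi^\lambda(e^{\sigma Fa})\big)=\sum_\lambda\tr\big(C^\lambda(0)^\top\pi^\lambda(xe^{\sigma Fa})\big)$, using that $\pi^\lambda$ extends to the holomorphic representation of $G_{\C}$, is multiplicative, and that $e^{\sigma Fa}\in T_{\C}\subset G_{\C}$; since $Fy=Fy^{\frak t}=Fa$, this is exactly the asserted expansion with $a^\lambda_{ij}=c^\lambda_{ij}(0)$. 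Besides the bookkeeping of the first step, the only delicate point is the $C^\infty$–convergence and termwise differentiability of the Peter--Weyl series that legitimizes the passage to the ODE for the $C^\lambda$, and this is standard and already implicit in the arguments of Section \ref{newpol}.
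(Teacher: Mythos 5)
Your proof is correct, but it takes a genuinely different route from the paper's. The paper argues geometrically: it invokes Proposition \ref{partialpullback} to see that $\phi$ restricted to each leaf $L_{(x,y)}$ is the pull-back of a holomorphic function on $T_{\C}$, expands that function in $T_{\C}$-characters over a local trivialization $p^{-1}(U)\times\frak g\cong U\times T_{\C}\times\frak t^\perp$ of the bundle $G\to G/T$, checks that the coefficient functions transform equivariantly under change of local section so that $a_\lambda^s(p(x))e^\lambda(t_x)$ glues to a global $T$-equivariant function on $G$ of weight $\lambda$, and only then applies Peter--Weyl on $G$, with the weight-space basis making the restriction to representations containing $\lambda$ explicit. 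You reverse the order: Peter--Weyl in the $x$-variable first, then the polarization condition becomes the commuting constant-coefficient system $\partial_{a^j}C^\lambda=\sigma\,C^\lambda(D_j^\lambda)^\top$ in the $\frak t$-variable, solved by $C^\lambda(a)=C^\lambda(0)\pi^\lambda(e^{\sigma Fa})^\top$. Your reduction of $\nabla_{\overline{E_j^{0,\sigma}}}$ to $\partial\phi/\partial a^j=\sigma X_{FT_j}\phi$ is right (with $\overline{M^{0,\sigma}}=-\sigma F$, $e^{\sigma\ad_{Fy}}T_j=T_j$ on $\frak t$, and $[y,FT_j]\in\frak t^\perp$ killed by the $y^{\frak t^\perp}$-independence), and the final reassembly via multiplicativity of the holomorphic extension of $\pi^\lambda$ is exactly the claimed expansion. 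What your route buys is the avoidance of the local-section/gluing argument and of Proposition \ref{partialpullback} altogether; what it costs is the need to justify termwise differentiation of the Peter--Weyl series in both variables, which is standard for smooth functions on a compact group (rapid decay of coefficients) but should be stated as the hypothesis you are actually using. The paper's version, by contrast, exhibits the weight-space structure of the answer directly, which is what is exploited later in the finer decomposition (\ref{finer}).
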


\begin{proof}
>From Proposition \ref{partialpullback}, we see that $\phi \in C^\infty({\mathcal P}_{0,\sigma})$, being $J_{\sigma,0}^{L_{(x,y)}}$-holomorphic along the leaf 
$L_{(x,y)}$, must be given along such leaves by the pull-back of an holomorphic function on $T_{\C}$. In more detail, let $U$ be a sufficiently small open set on the flag manifold $G/T$ and let $s_U$ be a local section of the principal fiber bundle $p\colon  G \to G/T$. We then have a diffeomorphism
\begin{eqnarray}\nonumber
p^{-1}(U)\times {\frak g} = p^{-1}(U)\times {\frak t} \times {\frak t}^\perp &\stackrel{\alpha_U}{\to}& U\times T_{{\C}} \times   {\frak t}^\perp\\ \nonumber
(x,y)  &\mapsto& (p(x), t_x e^{\sigma Fy}, y^\perp ),
\end{eqnarray}
where $x= s_U(p(x))\cdot t_x$ and $p^{-1}(U)\times {\frak g}\subset T^*G$ is open. From Proposition \ref{partialpullback} and from the Fourier expansion of holomorphic functions on $T_{\C}$ , it is then clear that, over 
$p^{-1}(U)\times {\frak g}$, $\phi\in C^\infty({\mathcal P}_{0,\sigma})$ is of the form 
$$
\phi = \alpha_U^*  \left(\sum_{\lambda\in \hat T} a_\lambda^s (p(x)) e^{\lambda}(t_{x}e^{\sigma Fy})\right),
$$ 
where $\hat T$ is the set of characters for $T$, which we identify with the set of weights of $G$. If $s'_U = t_0^{-1} \cdot s_U, t_0\in T$, is another local section 
then,
$$
a_\lambda^{s} (p(x) ) = a_\lambda^{s'} (p(x)) e^{\lambda}(t_0),
$$
so that 
$$
a_\lambda^s (p(x)) e^\lambda (t_x)
$$
is a globally defined smooth function on $G$ which is $T$-equivariant with weight $\lambda$. It follows, from the Peter-Weyl expansion of smooth functions on $G$, that
this function can be expanded in a series of matrix elements  $\{\pi^{\tilde \lambda}_{ij}\},\,\, \tilde \lambda\in \hat G$, where the only contribution comes from representations $\tilde \lambda$ for which $\lambda$ is a weight. Writing these matrix representatives $\pi^{\tilde \lambda}$ in a basis of weight spaces, we
immediately obtain the statement of the Proposition. 
\end{proof}

\begin{proposition}\label{peterweylpartial}Let $\lambda \in \hat G$. Then,
$$
\pi^\lambda_{ij} (x e^{\sigma Fy}) e^{-i\lambda_{0,\sigma}} \otimes \sqrt{\Omega_{0,\sigma}} \in {\mathcal H}_{{\mathcal P}_{0,\sigma}}.
$$
\end{proposition}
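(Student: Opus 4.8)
The plan is to verify directly the two conditions defining membership in $\mathcal{H}_{\mathcal P_{0,\sigma}}$ in (\ref{structure2}): that $\phi(x,y):=\pi^\lambda_{ij}(xe^{\sigma Fy})$ lies in $C^\infty(\mathcal P_{0,\sigma})$, and that the associated half-form section is square-integrable for $\langle\cdot,\cdot\rangle_{0,\sigma}$. For the first, I would note that, since $\image F\subset\frak t$ and $F|_{\frak t^\perp}=0$, the function $\phi$ depends only on $y^{\frak t}$ and hence is annihilated by $\partial/\partial y^j$ for $j=r+1,\dots,n$; and that, restricted to a leaf $L_{(x,y)}=F_x\times\{y+\frak t\}$, writing $x=x_0 t$ one has $xe^{\sigma F(y+a)}=x_0\cdot\beta^\sigma_{x_0}(x,y+a)$, so $\phi|_{L_{(x,y)}}=\pi^\lambda_{ij}\circ L_{x_0}\circ\beta^\sigma_{x_0}$ is $J^{L_{(x,y)}}_{0,\sigma}$-holomorphic by Proposition \ref{partialpullback} (using that $\pi^\lambda_{ij}$ is holomorphic on $G_{\C}$ and that left translation $L_{x_0}$ is holomorphic). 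Equivalently, this is the single-term case of the expansion in the preceding Proposition. The identity $\bar E^{0,\sigma}_j\lambda_{0,\sigma}=\theta(\bar E^{0,\sigma}_j)$ together with $\theta(\partial/\partial y^j)=0$, exactly as in the proof of Theorem \ref{structure2}, then gives $\nabla_{\overline{\mathcal P_{0,\sigma}}}\bigl(\phi\,e^{-i\lambda_{0,\sigma}}\bigr)=0$.

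For the $L^2$ condition, which is the substantive point, I would observe that the integrand and the half-form density depend only on $(x,y^{\frak t})$, so the integral over $G\times i\frak t$ reduces to one over $G\times\frak t$; there $\kappa_{0,\sigma}=2\sigma_2 f=\sigma_2\langle a,Fa\rangle$ with $a\in\frak t$, and $|\sqrt{\Omega_{0,\sigma}}|^2$ is a positive constant times $\alpha_\sigma$ by Proposition \ref{partialhalfform}. Hence the squared norm equals, up to a positive constant,
\[
\int_G\int_{\frak t}\bigl|\pi^\lambda_{ij}(xe^{\sigma Fa})\bigr|^2\,e^{-\sigma_2\langle a,Fa\rangle}\,da\,dx.
\]
Factoring $xe^{\sigma Fa}=\bigl(xe^{\sigma_1 Fa}\bigr)\,e^{i\sigma_2 Fa}$ with $xe^{\sigma_1 Fa}\in G$, unitarity of $\pi^\lambda$ on $G$ and the action of $e^{i\sigma_2 Fa}$ on the finitely many weight spaces of $\lambda$ give $\bigl|\pi^\lambda_{ij}(xe^{\sigma Fa})\bigr|\le e^{c_\lambda\|a\|}$ for a constant $c_\lambda$ depending only on $\lambda,\sigma,F$. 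Since $\sigma_2>0$ and $F|_{\frak t}>0$, the weight $e^{-\sigma_2\langle a,Fa\rangle}$ is a genuine Gaussian on $\frak t\cong\R^r$, which dominates $e^{2c_\lambda\|a\|}$; so the $a$-integral converges for each $x$ and the remaining $x$-integral is over the compact group $G$. This is the same mechanism used for Proposition \ref{matrixelementisl2} and in Theorem 4.6 of \cite{KMN1}, only simpler because at $\tau=0$ the half-form factor carries no growth.

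I expect the main obstacle to be not any single step but keeping the geometry straight: one must be sure that the reduction by $D_\sigma$ identifies the integration domain with $G\times\frak t$ and that the half-form measure of Proposition \ref{partialhalfform} contributes only a constant density, so that the full weight against which the exponentially growing matrix coefficient is integrated is exactly the positive-definite Gaussian $e^{-\sigma_2\langle a,Fa\rangle}$. Once these identifications are pinned down, the convergence estimate is routine.
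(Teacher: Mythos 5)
Your proof is correct and follows exactly the mechanism the paper invokes (its own proof is a one-line reference to Proposition \ref{matrixelementisl2}, relying on the Gaussian factor $e^{-\kappa_{0,\sigma}}=e^{-\sigma_2\langle a,Fa\rangle}$ dominating the at-most-exponential growth of the matrix coefficient along $i\frak t$, with the half-form contributing only a constant); your version simply makes the polarization check and the convergence estimate explicit. The only nitpick is that on a leaf one has $x'e^{\sigma F(y+a)}=x_0e^{\sigma Fy^{\frak t}}\cdot\beta^\sigma_{x_0}(x',y+a)$ rather than $x_0\cdot\beta^\sigma_{x_0}(x',y+a)$, but the extra fixed factor is immaterial since left translation in $G_{\C}$ is holomorphic.
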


\begin{proof}This is similar to the proof of Proposition \ref{matrixelementisl2} where now the Gaussian factor $e^{-\kappa_{0,\sigma}}$ ensures convergence of the integral along the non-compact factor $i\frak t$.
\end{proof}

\begin{corollary}We have the decomposition
    \begin{equation*}
        {\mathcal H}_{{\mathcal P}_{0,\sigma}} = \overline{\oplus_{\lambda \in \hat G} V^\lambda_{0,\sigma}},
    \end{equation*}
where
$$
V^\lambda_{0,\sigma} = \bigl\{\pi^\lambda_{ij}(xe^{\sigma Fy})e^{-i\lambda_{0,\sigma}}\otimes \sqrt{\Omega_{0,\sigma}} \bigm| i,j =1, \dots, \dim \lambda \bigr\}.
$$
\end{corollary}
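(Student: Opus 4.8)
The plan is to assemble the decomposition from the three structural results already in hand: the characterization of $\mathcal{H}_{\mathcal{P}_{0,\sigma}}$ in Theorem~\ref{structure2}, the expansion of an arbitrary element of $C^\infty(\mathcal{P}_{0,\sigma})$ from the preceding Proposition, and the membership statement of Proposition~\ref{peterweylpartial}. First I would observe that each $V^\lambda_{0,\sigma}$ is a finite-dimensional subspace of $\mathcal{H}_{\mathcal{P}_{0,\sigma}}$: finite-dimensionality is clear since $i,j$ range over $1,\dots,\dim\lambda$, and inclusion in $\mathcal{H}_{\mathcal{P}_{0,\sigma}}$ is exactly Proposition~\ref{peterweylpartial}. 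Then the closed span $\overline{\oplus_{\lambda}V^\lambda_{0,\sigma}}$ is contained in $\mathcal{H}_{\mathcal{P}_{0,\sigma}}$, so one direction of the set equality is immediate.

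For the reverse inclusion, I would take a general element $\phi e^{-i\lambda_{0,\sigma}}\otimes\sqrt{\Omega_{0,\sigma}}$ of the dense subspace appearing inside the closure in \eqref{structure2}, with $\phi\in C^\infty(\mathcal{P}_{0,\sigma})$, and apply the expansion $\phi(x,y)=\sum_{\lambda\in\hat G}\sum_{i,j}a_{ij}^\lambda\pi^\lambda_{ij}(xe^{\sigma Fy})$ from the Proposition just above. Each partial sum $\sum_{\lambda,i,j}a_{ij}^\lambda\pi^\lambda_{ij}(xe^{\sigma Fy})e^{-i\lambda_{0,\sigma}}\otimes\sqrt{\Omega_{0,\sigma}}$ lies in $\oplus_\lambda V^\lambda_{0,\sigma}$, so it suffices to check that these partial sums converge to $\phi e^{-i\lambda_{0,\sigma}}\otimes\sqrt{\Omega_{0,\sigma}}$ in the Hilbert space norm $\langle\cdot,\cdot\rangle_{0,\sigma}$. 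The key point is orthogonality: using the explicit half-form density from Proposition~\ref{partialhalfform} (which is a constant multiple of $\alpha_\sigma$), the Gaussian factor $e^{-\kappa_{0,\sigma}}=e^{-2\sigma_2 f}$, and the fact that $A_{0,\sigma}$-pulled-back matrix elements $\pi^\lambda_{ij}(xe^{\sigma Fy})$ restrict on each leaf $L_{(x,y)}\cong T_{\mathbb C}$ to characters (Proposition~\ref{partialpullback}) while their behaviour on $G$ is governed by Peter--Weyl, the cross terms between distinct $(\lambda,i,j)$ integrate to zero. This reduces the norm computation to a sum of the individual norms $\|\pi^\lambda_{ij}(xe^{\sigma Fy})e^{-i\lambda_{0,\sigma}}\otimes\sqrt{\Omega_{0,\sigma}}\|^2_{0,\sigma}$, and the finiteness of $\|\phi e^{-i\lambda_{0,\sigma}}\otimes\sqrt{\Omega_{0,\sigma}}\|^2_{0,\sigma}$ forces convergence of that series, hence of the partial sums.

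I would then conclude by a standard closure argument: $\mathcal{H}_{\mathcal{P}_{0,\sigma}}$ is by definition the closure of the subspace in \eqref{structure2}, every element of that subspace lies in $\overline{\oplus_\lambda V^\lambda_{0,\sigma}}$ by the previous paragraph, and the latter is closed, so $\mathcal{H}_{\mathcal{P}_{0,\sigma}}\subseteq\overline{\oplus_\lambda V^\lambda_{0,\sigma}}$. Combined with the reverse inclusion this gives the claimed decomposition. (One should also record the mutual orthogonality of the $V^\lambda_{0,\sigma}$ for distinct $\lambda$, which is part of what ``$\oplus$'' asserts and which comes from the same Peter--Weyl orthogonality of matrix elements integrated against the $G$-invariant part of the half-form measure.)

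The main obstacle I anticipate is making the orthogonality and $L^2$-decomposition rigorous in the presence of the non-compact imaginary-torus directions $i\mathfrak t$: one must verify that the integral over $G\times\{i\mathfrak t\}$ genuinely factors, with the $G$-integration producing Peter--Weyl orthogonality and the $i\mathfrak t$-integration producing a finite Gaussian-type factor (controlled by $e^{-\kappa_{0,\sigma}}=e^{-2\sigma_2 f}$ with $f$ positive-definite on $\mathfrak t$ and $\sigma_2>0$), and that these two steps commute by Fubini. This is the same kind of estimate that underlies Proposition~\ref{matrixelementisl2} and the proof of Theorem~4.6 in \cite{KMN1}, so the tools are available; the care needed is purely in the bookkeeping of the mixed polarization, where only the $r$ torus directions are ``holomorphic'' and the remaining $n-r$ directions of $\mathfrak{t}^\perp$ are constrained by the real part of the polarization.
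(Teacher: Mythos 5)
Your proposal is correct and follows exactly the route the paper intends: the corollary is stated without proof precisely because it is assembled from the expansion $\phi(x,y)=\sum_{\lambda,i,j}a^\lambda_{ij}\pi^\lambda_{ij}(xe^{\sigma Fy})$ of the preceding Proposition together with the $L^2$-membership of Proposition~\ref{peterweylpartial}, which is what you do. The orthogonality bookkeeping you supply (Peter--Weyl on $G$, Gaussian factor on $i\mathfrak{t}$) is the same computation the paper carries out explicitly later in the proof of unitarity of $U_{0,\sigma}$, so your write-up is simply a more detailed version of the paper's implicit argument.
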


\section{Partial coherent state transforms and unitarity}

\subsection{The generalized coherent state transforms $U_{\tau,\sigma}$}

Recall from \cite{KMN1} that there is a natural $G\times G$-action on ${\mathcal H}_{{\mathcal P}_{\tau,0}},$ for $\tau \in {\C}^+$, which 
extends the $G\times G$-action on ${\mathcal H}_{{\mathcal P}_{0,0}}$ associated to the Peter-Weyl decomposition (\ref{peterweyl}). One has, for
$x', \tilde x\in G$,
$$
(x',\tilde x)\cdot \Phi \bigl(x{e^{\tau u_h(y)}}\bigr)e^{-i\lambda_{\tau,0}}\otimes \sqrt{\Omega_{\tau,0}} = \Phi \bigl(x'x{e^{\tau u_h(y)}}\tilde x\bigr)e^{-i\lambda_{\tau,0}}\otimes \sqrt{\Omega_{\tau,0}},
$$
for $\Phi \in {\mathcal O}(G_{\C})$. Note that $\lambda_{\tau,0}$, in particular, is $G\times G$-invariant. This action preserves the decomposition in (\ref{decomptausigma}). In the case when $\sigma\neq 0$, that we consider in this paper, one obtains a $G\times T$-action instead. In particular, note that 
$\lambda_{\tau,\sigma}$ is only $G\times T$-invariant in this case since, in general, $\exp (\sigma F \Ad_{\tilde x^{-1}}y) \neq \Ad_{\tilde x^{-1}} 
(\exp (\sigma Fy))$.

Therefore, we will consider a finer decomposition, for $\tau \in {\C}^+, \sigma \in {\C}^+\cup {\R}$,
\begin{equation}\label{finer}
{\mathcal H}_{{\mathcal P}_{\tau,\sigma}} = \bigoplus_{\lambda \in \hat G} \oplus_{j=1}^{\dim \lambda} V_{\tau,\sigma}^{\lambda,\lambda_j},
\end{equation} 
where $\{\lambda_j\}_{j=1, \dots, \dim \lambda}$ is the set of weights of the irreducible representation of highest weight $\lambda$, with $\lambda_1 = \lambda$, and
$$
V_{\tau,\sigma}^{\lambda,\lambda_j} = \spn_{\C} \bigl\{ \pi^\lambda_{kj}\bigl(x e^{\tau u_h(y)}e^{\sigma Fy}\bigr) e^{-i\lambda_{\tau,\sigma}} \otimes \sqrt{\Omega_{\tau,\sigma}} \bigm| k=1,\dots, \dim \lambda \bigr\},
$$
with $\pi^\lambda$ written in the basis of weight vectors, so that for $a\in \frak t$,
$$
\pi^\lambda_{ki}\bigl(e^{a}\bigr) = {\rm Diag}\,\, \left(e^{i\langle \lambda_1, a\rangle}, \ldots, e^{i\langle \lambda_{\dim \lambda}, a\rangle} \right).
$$

We then have the natural action of $G\times T$ on ${\mathcal H}_{{\mathcal P}_{\tau,\sigma}}$,
$$
(x',t)\cdot \Phi \bigl(x e^{\tau u_h(y)}e^{\sigma Fy}\bigr) e^{-i\lambda_{\tau,\sigma}} \otimes \sqrt{\Omega_{\tau,\sigma}} = \Phi \bigl(x'x e^{\tau u_h(y)}e^{\sigma Fy}t\bigr) e^{-i\lambda_{\tau,\sigma}} \otimes \sqrt{\Omega_{\tau,\sigma}},
$$ 
for $\Phi \in {\mathcal O}(G_{\C})$ and $(x',t) \in G\times T$, which preserves the decomposition in (\ref{finer}). 

Let now $h_{pq}$ and $f_{pq}$ be the Kostant-Souriau prequantum operators, on the half-form corrected prequantum (trivial) bundle $L\otimes \sqrt{K_{{\mathcal P}_{\tau,\sigma}}}$, associated to $h$ and $f$,
\begin{IEEEeqnarray*}{rCl}\nonumber
h_{pq} &= &\left(i\nabla_{X_h} + h\right)\otimes 1 + 1 \otimes i{\mathcal L}_{X_h}\\ \nonumber
f_{pq}&= &\left(i\nabla_{X_f} + f\right)\otimes 1 + 1 \otimes i{\mathcal L}_{X_f}
\end{IEEEeqnarray*}

\begin{lemma}One has,
\begin{IEEEeqnarray}{rCl}\label{pqh}
h_{pq} &=& iX_h + (h-\langle y, u_h(y)\rangle), \\ \label{pqf}
f_{pq} &=& iX_f -f.
\end{IEEEeqnarray}
Moreover, as operators on $C^\infty(L\otimes \sqrt{K_{{\mathcal P}_{\tau,\sigma}}})$,
\begin{equation}\label{pqscommute}
[h_{pq},f_{pq}]=0.
\end{equation}
\end{lemma}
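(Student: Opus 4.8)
The plan is to first unwind the definition of the Kostant--Souriau operators to obtain the explicit expressions (\ref{pqh}) and (\ref{pqf}), and then to verify (\ref{pqscommute}) by expanding the commutator into four pieces and checking that each one vanishes.

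For (\ref{pqh}) and (\ref{pqf}) I would use that $L$ is the trivial line bundle with connection $\nabla = d + i\theta$, so that for a section $s \in C^\infty(L) = C^\infty(T^*G)\otimes\C$ one has $i\nabla_X s = iX(s) - \theta(X)\,s$. By Proposition \ref{hvf} and the lemma computing the Hamiltonian flow of $f$, the Hamiltonian vector fields are $X_h|_{(x,y)} = (u_h(y),0)$ and $X_f|_{(x,y)} = (Fy,[y,Fy])$, so (\ref{theta}) gives $\theta(X_h) = \langle y, u_h(y)\rangle$ and $\theta(X_f) = \langle y, Fy\rangle = 2f$. Hence $(i\nabla_{X_h}+h)s = iX_h(s) + (h - \langle y,u_h(y)\rangle)s$ and $(i\nabla_{X_f}+f)s = iX_f(s) - f\,s$. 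Acting on $s\otimes\mu \in C^\infty(L\otimes\sqrt{K_{\mathcal P_{\tau,\sigma}}})$ and recombining the half-form contribution $s\otimes i\mathcal L_{X_g}\mu$ with $iX_g(s)\otimes\mu$ into $i$ times the Lie derivative of the tensor product along $X_g$ (which on the trivial $L$-factor acts simply by $X_g(\cdot)$, and which I keep denoting $X_g$), I obtain precisely (\ref{pqh}) and (\ref{pqf}).

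For (\ref{pqscommute}) I would set $P = h - \langle y, u_h(y)\rangle$ — a function pulled back from an $\Ad$-invariant function on $\g$ — and expand, as operators,
\begin{equation*}
[h_{pq}, f_{pq}] = [\,iX_h + P,\ iX_f - f\,] = -[X_h, X_f] - i\,[X_h, f] + i\,[P, X_f] - [P, f].
\end{equation*}
Here $[P,f] = 0$ because $P$ and $f$ act by multiplication; $[X_h, X_f] = 0$ because the Hamiltonian flows of $h$ and $f$ commute (the proposition establishing $\phi^t_{X_h}\circ\phi^s_{X_f} = \phi^s_{X_f}\circ\phi^t_{X_h}$); $[X_h, f] = X_h(f) = 0$ because $f = f(y)$ depends only on the fibre coordinate while $X_h$ has vanishing vertical component; and $[P, X_f] = -X_f(P)$, which vanishes since the vertical component of $X_f$ at $(x,y)$ is $[y,Fy] = \frac{d}{ds}\big|_{s=0}\Ad_{e^{-sFy}}y$, so that $X_f(P) = dP_y([y,Fy]) = \frac{d}{ds}\big|_{s=0}P(\Ad_{e^{-sFy}}y) = 0$ by $\Ad$-invariance of $P$. (Alternatively, $X_f(P) = 0$ can be checked by a short direct computation using $\nabla_y P = -H_h(y)\,y$ together with the identities (\ref{commutes}).)

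The main obstacle — really the only step beyond bookkeeping — is the vanishing of $X_f(P)$: this is exactly where the $G\times G$-invariance of $h$, encoded in (\ref{commutes}), enters, and it reflects the fact that the potential term appearing in $h_{pq}$ is constant along the $\Ad$-orbit directions swept out by the vertical part of $X_f$. Everything else reduces to the Leibniz rule. One could also deduce (\ref{pqscommute}) more abstractly from the fact that Kostant--Souriau prequantization with the half-form correction is a Lie algebra homomorphism together with $\{h,f\} = \pm X_h(f) = 0$, but I would favour the explicit expansion above, which keeps the argument self-contained and directly tied to (\ref{pqh})--(\ref{pqf}).
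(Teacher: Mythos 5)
Your proposal is correct and follows essentially the same route as the paper, whose proof simply states that (\ref{pqh})--(\ref{pqf}) follow from the definition and that (\ref{pqscommute}) is a restatement of $[X_h,X_f]=0$. You merely supply the details the paper elides, in particular the vanishing of $X_h(f)$ and of $X_f\bigl(h-\langle y,u_h(y)\rangle\bigr)$ via $\Ad$-invariance, which is a worthwhile (and correct) elaboration of the same argument.
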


\begin{proof}(\ref{pqh}) and (\ref{pqf}) follow directly from the definition, while (\ref{pqscommute}) is just a restatement of the fact that 
$[X_h,X_f]=0$.
\end{proof}

Following the ideology of \cite{KMN1,KMN2}, inspired by the structure of the coherent state transforms of Hall \cite{Ha1, Ha2}, we now introduce the quantum 
operators on ${\mathcal H}_{{\mathcal P}_{0,0}}$ in view of (\ref{finer}). 
Let $\rho\in {\frak t}$ be the Weyl vector defined by half the sum of the positive roots of 
${\frak g}\otimes {\C}$. Define
\begin{IEEEeqnarray}{rrCl}\label{quantumh}
    Q(h) \colon & {\mathcal H}_{{\mathcal P}_{0,0}} & \to & {\mathcal H}_{{\mathcal P}_{0,0}} \\
    & \pi_{jk}^\lambda(x) \otimes \sqrt{dx} & \mapsto & h(-(\lambda+\rho)) \pi_{jk}^\lambda(x)\otimes \sqrt{dx} \nonumber \\
    \noalign{\noindent and \vspace{2\jot}}
    Q(f) \colon & {\mathcal H}_{{\mathcal P}_{0,0}} & \to & {\mathcal H}_{{\mathcal P}_{0,0}} \label{quantumf} \\
    & \pi_{jk}^\lambda(x) \otimes \sqrt{dx} & \mapsto & f(-\lambda_j) \pi_{jk}^\lambda(x)\otimes \sqrt{dx} \nonumber
\end{IEEEeqnarray}

Obviously, these operators commute with each other and they preserve the decomposition in (\ref{finer}).

\begin{lemma}\label{ajax}Let $\lambda\in \hat G, i,j=1,\dots, \dim \lambda.$ Then,
$$
e^{-i\tau h_{pq}} \circ e^{-i\sigma f_{pq}} \pi^\lambda_{jk}(x)\otimes \sqrt{dx} = e^{-i\lambda_{\tau,\sigma}} \pi^\lambda_{jk} \bigl(xe^{\tau u(y)}e^{\sigma Fy}\bigr)\otimes \sqrt{\Omega_{\tau,\sigma}}.
$$
\end{lemma}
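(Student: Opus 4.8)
The plan is to reduce the statement to a purely "matrix-element" computation by first understanding the action of each of the two prequantum operators separately, using that they commute (equation (\ref{pqscommute})) so the composition of their exponentials is unambiguous, and then recognizing that exponentiating $-i\tau h_{pq}$ and $-i\sigma f_{pq}$ is, by construction, the same as transporting the half-form-corrected section by the (complexified) Hamiltonian flows $\phi_{X_h}^\tau$ and $\phi_{X_f}^\sigma$. First I would establish the flow/operator dictionary: on a half-form-corrected section $s \otimes \mu$, the Kostant--Souriau operator $i\nabla_{X_g} + g$ on the $L$-factor together with $i\mathcal{L}_{X_g}$ on the half-form factor generates precisely the pull-back by $\phi^{-t}_{X_g}$ of the section together with parallel transport, so that $e^{-it g_{pq}}(s\otimes \mu) = (\phi^t_{X_g})_*(s) \otimes e^{t\mathcal L_{X_g}}\mu$ up to the standard holonomy/action correction; the analytic continuation in $t$ to $\tau\in\C^+$ is then legitimate because of the real-analyticity of the flows in time (as already invoked before Proposition \ref{polsts}) and the convergence estimates behind Proposition \ref{cstau} and Theorem \ref{ohyes}.

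Concretely, I would carry out the computation in two steps. Step one: apply $e^{-i\sigma f_{pq}}$ to $\pi^\lambda_{jk}(x)\otimes\sqrt{dx}$. Using (\ref{pqf}), $f_{pq} = iX_f - f$, and the explicit flow $\phi^s_{X_f}(x,y) = (xe^{sFy}, e^{-s\ad_{Fy}}y)$ from the Lemma in Section \ref{sectionquadratic}, the $L$-factor evolves to $\pi^\lambda_{jk}(xe^{\sigma Fy})$ times the accumulated phase $e^{-i\lambda_{0,\sigma}} = e^{i\sigma f}$ coming from integrating $-f$ along the flow (here one uses $Fy\in\ttt$, hence $F\circ e^{s\ad_{Fy}} = F$, so $f$ is constant along the $X_f$-flow and the phase integrates cleanly); the half-form $\sqrt{dx}=\sqrt{w^1\wedge\cdots\wedge w^n}$ evolves to $\sqrt{\Omega_{0,\sigma}}$ by the Lemma computing $e^{\sigma\mathcal L_{X_f}}(w^1\wedge\cdots\wedge w^n) = \Omega_{0,\sigma}$. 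Step two: apply $e^{-i\tau h_{pq}}$ to the result. By (\ref{pqh}), $h_{pq} = iX_h + (h - \langle y,u_h(y)\rangle)$, and the flow $\phi^t_{X_h}(x,y) = (xe^{tu_h(y)},y)$ pushes the argument $x$ to $xe^{\tau u_h(y)}$ (note the relevant product $e^{\tau u_h(y)}e^{\sigma Fy}$ appears in the correct order because of the commutation of the two flows, Proposition preceding Lemma \ref{derivatives}, and the identity $e^{sFy}e^{tu_h(e^{-s\ad_{Fy}}y)} = e^{tu_h(y)}e^{sFy}$); the phase picks up the extra factor $e^{-i\tau(\langle y,u_h(y)\rangle - h(y))}$, completing $\lambda_{\tau,\sigma}(x,y) = -\tau(\langle y,u_h(y)\rangle - h(y)) - \sigma f$; and the half-form is carried by $e^{\tau\mathcal L_{X_h}}$ from $\sqrt{\Omega_{0,\sigma}}$ to $\sqrt{e^{\tau\mathcal L_{X_h}}\Omega_{0,\sigma}} = \sqrt{\Omega_{\tau,\sigma}}$, using (\ref{sectioncanonical}) together with the commutation of $\mathcal L_{X_h}$ and $\mathcal L_{X_f}$.

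Assembling the two steps gives exactly
$$
e^{-i\tau h_{pq}}\circ e^{-i\sigma f_{pq}}\,\pi^\lambda_{jk}(x)\otimes\sqrt{dx} = e^{-i\lambda_{\tau,\sigma}}\,\pi^\lambda_{jk}\bigl(xe^{\tau u(y)}e^{\sigma Fy}\bigr)\otimes\sqrt{\Omega_{\tau,\sigma}},
$$
as claimed. The main obstacle I anticipate is not any single algebraic identity — each is a short consequence of results already in the paper — but rather justifying the passage from the real-time flow identity (where everything is a genuine geometric push-forward) to the complex-time statement, i.e.\ checking that the power series defining $e^{-i\tau h_{pq}}$ and $e^{-i\sigma f_{pq}}$ acting on the matrix-element sections converge and agree with the analytically continued expressions. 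This is handled by noting that matrix elements $\pi^\lambda_{jk}$ are entire on $G_\C$ of controlled exponential type, so the series converge on the relevant domains and the identity, true for $\tau,\sigma\in\R$, extends by analytic continuation in $(\tau,\sigma)$ to $\C^+\times(\C^+\cup\R)$; the half-form factor is controlled by the growth bound in Corollary \ref{half-formgrowth}.
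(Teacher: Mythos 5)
Your proposal is correct and follows essentially the same route as the paper's (very terse) proof: factor each operator exponential into the complexified flow acting on the matrix element, the accumulated phase $e^{-i\lambda_{\tau,\sigma}}$ coming from the multiplication part of (\ref{pqh})--(\ref{pqf}) (which commutes with the flow part because $h-\langle y,u_h(y)\rangle$ and $f$ are constant along the respective flows), and the transport of the half-form by $e^{\tau\mathcal{L}_{X_h}}\circ e^{\sigma\mathcal{L}_{X_f}}$, which equals $\Omega_{\tau,\sigma}$ by (\ref{sectioncanonical}) and $[\mathcal{L}_{X_h},\mathcal{L}_{X_f}]=0$, followed by analytic continuation from real time. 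The paper compresses all of this into citations of \cite{MN}, Theorem \ref{super} and the proof of Theorem 3.7 in \cite{KMN1}; your write-up just makes the same mechanism explicit.
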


\begin{proof}The fact that
$$
e^{\tau {\mathcal L}_{X_h}}\circ e^{\sigma {\mathcal L}_{X_f}}\Omega_{0,0} =  e^{\sigma {\mathcal L}_{X_f}} \circ e^{\tau {\mathcal L}_{X_h}} \Omega_{0,0}= \Omega_{\tau,\sigma},
$$
follows from \cite{MN}, (\ref{sectioncanonical}) and Theorem 3.10 in \cite{KMN1}. Then (\ref{pqh}), (\ref{pqf}), \cite{MN}, Theorem \ref{super} and the proof of Theorem 3.7 in \cite{KMN1} give the statement of the Lemma.
\end{proof}

We obtain the following
\begin{corollary}The operator 
$$
e^{-i\tau h_{pq}} \circ e^{-i\sigma f_{pq}}
$$
is a densely defined operator from ${\mathcal H}_{0,0}$ to ${\mathcal H}_{\tau,\sigma}$.
\end{corollary}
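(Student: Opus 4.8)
The plan is to exhibit an explicit dense domain on which the composite operator is defined and whose image lies inside $\mathcal{H}_{\tau,\sigma}$ (for the relevant range $\tau\in\C^+$, $\sigma\in\C^+\cup\R$); since ``densely defined'' asks for nothing beyond this, that will finish the proof. Concretely, I would take the domain $\mathcal{D}\subset\mathcal{H}_{0,0}$ to be the algebraic linear span of the Peter--Weyl generators $\pi^\lambda_{jk}(x)\otimes\sqrt{dx}$, $\lambda\in\hat G$, $j,k=1,\dots,\dim\lambda$. By the Peter--Weyl decomposition (\ref{peterweyl}), $\mathcal{D}$ is dense in $\mathcal{H}_{0,0}$, so it suffices to check that $e^{-i\tau h_{pq}}\circ e^{-i\sigma f_{pq}}$ is well defined on each generator and maps it into $\mathcal{H}_{\tau,\sigma}$; linearity then extends the conclusion to all of $\mathcal{D}$.

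For a single generator this is exactly the content of Lemma \ref{ajax}, which gives
\[
e^{-i\tau h_{pq}}\circ e^{-i\sigma f_{pq}}\bigl(\pi^\lambda_{jk}(x)\otimes\sqrt{dx}\bigr)
= e^{-i\lambda_{\tau,\sigma}}\,\pi^\lambda_{jk}\bigl(xe^{\tau u_h(y)}e^{\sigma Fy}\bigr)\otimes\sqrt{\Omega_{\tau,\sigma}}
= \bigl(\pi^\lambda_{jk}\circ A_{\tau,\sigma}\bigr)\, e^{-i\lambda_{\tau,\sigma}}\otimes\sqrt{\Omega_{\tau,\sigma}},
\]
where the last equality is just the definition $A_{\tau,\sigma}(x,y)=xe^{\tau u_h(y)}e^{\sigma Fy}$. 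Proposition \ref{matrixelementisl2} states precisely that this section belongs to $\mathcal{H}_{\tau,\sigma}$. Hence $e^{-i\tau h_{pq}}\circ e^{-i\sigma f_{pq}}$ carries $\mathcal{D}$ linearly into $\mathcal{H}_{\tau,\sigma}$, and, $\mathcal{D}$ being dense in $\mathcal{H}_{0,0}$, it is a densely defined operator $\mathcal{H}_{0,0}\to\mathcal{H}_{\tau,\sigma}$, as claimed.

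I do not expect a real obstacle here: the corollary is essentially a bookkeeping consequence of Lemma \ref{ajax} and Proposition \ref{matrixelementisl2}. The only mildly delicate point is the well-definedness of the two exponentials $e^{-i\sigma f_{pq}}$ and $e^{-i\tau h_{pq}}$ on a matrix-element section --- convergence of the relevant series together with the compatibility $e^{\tau\mathcal{L}_{X_h}}\circ e^{\sigma\mathcal{L}_{X_f}}\Omega_{0,0}=\Omega_{\tau,\sigma}$ --- but this has already been absorbed into Lemma \ref{ajax} via \cite{MN}, the frame (\ref{sectioncanonical}) and Theorem 3.10 of \cite{KMN1}. No $L^2$-estimate is required at this stage, since only density of the domain is asserted; boundedness and (suitably rescaled) unitarity are left to the subsequent results.
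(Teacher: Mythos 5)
Your proof is correct and is exactly the argument the paper intends: the corollary is stated without proof as an immediate consequence of Lemma \ref{ajax} together with Proposition \ref{matrixelementisl2}, and you have simply spelled out that the span of the Peter--Weyl generators is a dense domain mapped into ${\mathcal H}_{{\mathcal P}_{\tau,\sigma}}$.
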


Let us now define the generalized CST,
\begin{equation*}
U_{\tau,\sigma}\colon  {\mathcal H}_{0,0}\to {\mathcal H}_{\tau,\sigma},
\end{equation*}
with
\begin{equation*}
U_{\tau,\sigma} = e^{-i\tau h_{pq}} \circ e^{-i\sigma f_{pq}} \circ e^{i\tau Q(h)} \circ e^{i\sigma Q(f)},
\end{equation*}
for $\tau \in {\C}^+, \sigma \in {\C}^+\cup {\R}$.

The above implies the following
\begin{theorem}Let $\tau \in {\C}^+, \sigma \in {\C}^+\cup {\R}$. Then, the generalized CST $U_{\tau, \sigma}$ is a linear isomorphism that intertwines the $G\times T$-actions on 
${\mathcal H}_{0,0}$ and ${\mathcal H}_{\tau,\sigma}$.
\end{theorem}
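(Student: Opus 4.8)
The plan is to reduce everything to Lemma~\ref{ajax}, Proposition~\ref{matrixelementisl2} and the fine block decomposition \eqref{finer}: I would evaluate $U_{\tau,\sigma}$ on matrix elements and show that it acts block by block, as a nonzero scalar times a purely ``geometric'' identification $V^{\lambda,\lambda_j}_{0,0}\to V^{\lambda,\lambda_j}_{\tau,\sigma}$, and then read off both bijectivity and $G\times T$-equivariance.

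First I would compute $U_{\tau,\sigma}$ on a basis vector of $V^{\lambda,\lambda_j}_{0,0}$. By \eqref{quantumh}--\eqref{quantumf} the operator $e^{i\tau Q(h)}\circ e^{i\sigma Q(f)}$ acts on $\pi^\lambda_{kj}(x)\otimes\sqrt{dx}$ by the scalar
\[
c^\lambda_j \;=\; e^{i\tau\,h(-(\lambda+\rho))}\,e^{i\sigma\,f(-\lambda_j)}\;\neq\;0,
\]
and then Lemma~\ref{ajax} gives
\[
U_{\tau,\sigma}\bigl(\pi^\lambda_{kj}(x)\otimes\sqrt{dx}\bigr)= c^\lambda_j\,\pi^\lambda_{kj}\bigl(xe^{\tau u_h(y)}e^{\sigma Fy}\bigr)\,e^{-i\lambda_{\tau,\sigma}}\otimes\sqrt{\Omega_{\tau,\sigma}}.
\]
By Proposition~\ref{matrixelementisl2} the right-hand side lies in $V^{\lambda,\lambda_j}_{\tau,\sigma}\subset\mathcal H_{\tau,\sigma}$, so $U_{\tau,\sigma}$ restricts to $c^\lambda_j$ times the map $\Psi^\lambda_j\colon\pi^\lambda_{kj}(x)\otimes\sqrt{dx}\mapsto\pi^\lambda_{kj}(xe^{\tau u_h(y)}e^{\sigma Fy})e^{-i\lambda_{\tau,\sigma}}\otimes\sqrt{\Omega_{\tau,\sigma}}$, which sends the spanning set of $V^{\lambda,\lambda_j}_{0,0}$ bijectively onto that of $V^{\lambda,\lambda_j}_{\tau,\sigma}$. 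Since the $c^\lambda_j$ are nonzero this is a linear isomorphism $V^{\lambda,\lambda_j}_{0,0}\to V^{\lambda,\lambda_j}_{\tau,\sigma}$; taking the direct sum over $(\lambda,j)$ and using \eqref{peterweyl} and \eqref{finer}, $U_{\tau,\sigma}$ is a linear isomorphism between the dense subspaces $\bigoplus^{\mathrm{alg}}_{\lambda,j}V^{\lambda,\lambda_j}_{0,0}$ and $\bigoplus^{\mathrm{alg}}_{\lambda,j}V^{\lambda,\lambda_j}_{\tau,\sigma}$, the statements at the level of the Hilbert-space completions being the subject of the next subsection.

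For the intertwining I would argue block by block. On $\mathcal H_{0,0}$ the $G\times T$-action is $(x',t)\cdot\pi^\lambda_{kj}(x)\otimes\sqrt{dx}=\pi^\lambda_{kj}(x'xt)\otimes\sqrt{dx}$; in the weight basis left translation by $x'$ mixes only the index $k$ and right translation by $t$ multiplies by the character $e^{i\langle\lambda_j,\log t\rangle}$, so $V^{\lambda,\lambda_j}_{0,0}$ is $G\times T$-invariant with $T$ acting through $\lambda_j$. On $\mathcal H_{\tau,\sigma}$ the action recalled before the statement sends $\pi^\lambda_{kj}(xe^{\tau u_h(y)}e^{\sigma Fy})e^{-i\lambda_{\tau,\sigma}}\otimes\sqrt{\Omega_{\tau,\sigma}}$ to the same expression with $x$ replaced by $x'x$ and a right factor $t$ inserted; because $t\in T$ commutes with $e^{\sigma Fy}\in T_{\C}$ (here one uses $\image F\subset{\frak t}$) and because $\lambda_{\tau,\sigma}$ and $\sqrt{\Omega_{\tau,\sigma}}$ are $G\times T$-invariant, the effect is again mixing of $k$ under $x'$ and multiplication by $e^{i\langle\lambda_j,\log t\rangle}$ under $t$. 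Hence $\Psi^\lambda_j$ intertwines the two $G\times T$-actions, and, scalars being $G\times T$-equivariant, so does $U_{\tau,\sigma}=\bigoplus_{\lambda,j}c^\lambda_j\,\Psi^\lambda_j$; equivalently, each factor of $U_{\tau,\sigma}$ is already equivariant because $Q(h),Q(f)$ are scalar on the $G\times T$-invariant blocks and $h_{pq},f_{pq}$ (hence their exponentials) are equivariant for the $G\times T$-action, $X_h$ being $G\times G$-invariant and $X_f$ being $G\times T$-invariant.

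The computations are all routine once conventions are fixed; the one point requiring care is the weight-basis bookkeeping, namely verifying that on each block $V^{\lambda,\lambda_j}_{\tau,\sigma}$ the right $T$-action is genuinely by the single character $\lambda_j$. This rests on $\image F\subset{\frak t}$ (so that $t$ passes through $e^{\sigma Fy}$) together with the $G\times T$-invariance of $\lambda_{\tau,\sigma}(x,y)=-\tau(\langle y,u_h(y)\rangle-h(y))-\sigma f(y)$ --- it depends on $y$ only through $\Ad$-invariant quantities and through the ${\frak t}$-component of $y$, both unchanged by $y\mapsto\Ad_{t^{-1}}y$ --- and of the half-form $\sqrt{\Omega_{\tau,\sigma}}$, obtained from the bi-invariant form $w^1\wedge\cdots\wedge w^n$ by the flows of the $G\times T$-invariant fields $X_h,X_f$ with the preferred square root fixed as in \cite{KMN1}. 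Granting these, the theorem is an immediate consequence of Lemma~\ref{ajax}, Proposition~\ref{matrixelementisl2} and the fact that $Q(h),Q(f)$ act by nonzero scalars on each block.
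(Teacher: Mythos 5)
Your proposal is correct and follows exactly the route the paper intends: the paper gives no separate argument beyond ``the above implies the following,'' and your write-up is precisely that chain --- Lemma~\ref{ajax} plus the nonzero scalars $e^{i\tau h(-(\lambda+\rho))}e^{i\sigma f(-\lambda_j)}$ identify each block $V^{\lambda,\lambda_j}_{0,0}$ with $V^{\lambda,\lambda_j}_{\tau,\sigma}$, Proposition~\ref{matrixelementisl2} gives membership in ${\mathcal H}_{{\mathcal P}_{\tau,\sigma}}$, and the weight-basis bookkeeping gives equivariance. Your explicit caveat that the isomorphism is between the dense algebraic sums is consistent with the paper's own framing (the preceding corollary only asserts a densely defined operator, and unitarity is deferred).
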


\begin{remark}
Note that in the case $\tau\in {\C}^+, \sigma =0$, the generalized CST $U_{\tau,0}$ intertwines the full $G\times G$ actions on 
${\mathcal H}_{{\mathcal P}_{0,0}}$ and on ${\mathcal H}_{{\mathcal P}_{\tau,0}}$. (See\cite{KMN1, KMN2}.) Therefore, while the CSTs $U_{\tau,0}$ are 
``$G\times G$-invariant'', for $\sigma\neq 0$ the CSTs $U_{\tau,\sigma}$ are only ``$G\times T$-invariant''.
\end{remark}

Setting $\tau =0$ in Lemma \ref{ajax} we obtain
$$
e^{-i\sigma f_{pq}} \bigl(\pi^\lambda_{jk}(x)\otimes \sqrt{dx}\bigr) = e^{-i\lambda_{0,\sigma}} \pi^\lambda_{jk}\bigl(xe^{\sigma Fy}\bigr) \otimes \sqrt{\Omega_{0,\sigma}}
$$
and the following

\begin{corollary} The operator 
$$
e^{-i\sigma f_{pq}}
$$
is a densely defined operator from ${\mathcal H}_{0,0}$ to ${\mathcal H}_{0,\sigma}$ which preserves the decomposition in (\ref{finer}).
\end{corollary}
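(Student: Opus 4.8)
The plan is to reduce everything to the Peter--Weyl decomposition (\ref{peterweyl}) of $\mathcal{H}_{0,0}$ together with the explicit action of $e^{-i\sigma f_{pq}}$ on matrix elements obtained by setting $\tau=0$ in Lemma \ref{ajax}. First I would take as domain the algebraic span $\mathcal{D}$ of the generators $\pi^\lambda_{jk}(x)\otimes\sqrt{dx}$, $\lambda\in\hat G$, $j,k=1,\dots,\dim\lambda$, which is dense in $\mathcal{H}_{0,0}$; since these generators are mutually orthogonal, hence linearly independent, the prescription
\[
e^{-i\sigma f_{pq}}\bigl(\pi^\lambda_{jk}(x)\otimes\sqrt{dx}\bigr)=e^{-i\lambda_{0,\sigma}}\,\pi^\lambda_{jk}\bigl(xe^{\sigma Fy}\bigr)\otimes\sqrt{\Omega_{0,\sigma}}
\]
extends unambiguously by linearity to $\mathcal{D}$.

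Next I would check that $e^{-i\sigma f_{pq}}(\mathcal{D})\subset\mathcal{H}_{0,\sigma}$: by Proposition \ref{peterweylpartial} each of the right-hand sides above already belongs to $\mathcal{H}_{\mathcal{P}_{0,\sigma}}$, so the operator maps a dense subspace of $\mathcal{H}_{0,0}$ into $\mathcal{H}_{0,\sigma}$, which is exactly the assertion that it is a densely defined operator between these two spaces.

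Finally I would verify that the $(\lambda,\lambda_j)$-grading (\ref{finer}) with $\tau=0$ is respected. By definition $V^{\lambda,\lambda_j}_{0,0}$ is spanned by the elements $\pi^\lambda_{kj}(x)\otimes\sqrt{dx}$, written in the weight basis with the second index fixed at $j$, and the displayed formula sends such an element to $\pi^\lambda_{kj}(xe^{\sigma Fy})e^{-i\lambda_{0,\sigma}}\otimes\sqrt{\Omega_{0,\sigma}}\in V^{\lambda,\lambda_j}_{0,\sigma}$; the key observation is that $e^{-i\sigma f_{pq}}$ only inserts the factor $e^{\sigma Fy}$ on the right and does not mix matrix entries, so the weight index $\lambda_j$ labelling the summand is preserved. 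Hence $e^{-i\sigma f_{pq}}$ carries $V^{\lambda,\lambda_j}_{0,0}$ into $V^{\lambda,\lambda_j}_{0,\sigma}$ for every $\lambda$ and $j$, which is precisely preservation of the decomposition.

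The one genuinely non-formal ingredient, and the step I expect to be the main obstacle, is the $L^2$-estimate already encapsulated in Proposition \ref{peterweylpartial}: a priori $e^{-i\sigma f_{pq}}$ is only defined on smooth sections of $L\otimes\sqrt{K_{\mathcal{P}_{0,\sigma}}}$, so one must confirm square-integrability of the image generators against the half-form density $\vert\sqrt{\Omega_{0,\sigma}}\vert^2$ of Proposition \ref{partialhalfform} over the non-compact fibre $i\frak t$. This is controlled by the Gaussian factor $e^{-\kappa_{0,\sigma}}=e^{-2\sigma_2 f}$ coming from (\ref{kahlerpot}) with $\tau=0$, where the hypotheses $\sigma_2>0$ and $F>0$ are used; everything else follows formally from Lemma \ref{ajax} and the definitions of the subspaces $V^{\lambda,\lambda_j}$.
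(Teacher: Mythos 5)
Your proposal is correct and follows essentially the same route as the paper, which derives the corollary by setting $\tau=0$ in Lemma \ref{ajax} to obtain the explicit action on the Peter--Weyl generators and then invokes Proposition \ref{peterweylpartial} for membership of the images in ${\mathcal H}_{{\mathcal P}_{0,\sigma}}$. Your additional observation that $\pi^\lambda(e^{\sigma Fy})$ is diagonal in the weight basis, so that right multiplication by $e^{\sigma Fy}$ does not mix the second (weight) index and hence the grading in (\ref{finer}) is preserved, is exactly the point the paper leaves implicit.
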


We now define the partial CST,
\begin{equation*}
U_{0,\sigma}\colon  {\mathcal H}_{0,0} \to {\mathcal H}_{0,\sigma},
\end{equation*}
by
\begin{equation*}
U_{0,\sigma} = e^{-i\sigma f_{pq}} \circ e^{i\sigma Q(f)},
\end{equation*}
for $\sigma \in {\C}^+.$ From the above, one obtains the following 
\begin{theorem}Let $\sigma \in {\C}^+$. The partial CST $U_{0,\sigma}$ is a linear isomorphism that intertwines the $G\times T$-actions on ${\mathcal H}_{0,0}$ and on ${\mathcal H}_{0,\sigma}$.
\end{theorem}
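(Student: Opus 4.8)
The plan is to read $U_{0,\sigma}$ off the weight-refined Peter--Weyl decomposition and check, block by block, that it is a grading-preserving linear bijection which is $G\times T$-equivariant. Three ingredients are already available. First, by the $\tau=0$ specialization of Lemma~\ref{ajax}, the geometric operator $e^{-i\sigma f_{pq}}$ sends the spanning vector $\pi^\lambda_{kj}(x)\otimes\sqrt{dx}$ of $\mathcal{H}_{0,0}$ to $\pi^\lambda_{kj}(xe^{\sigma Fy})\,e^{-i\lambda_{0,\sigma}}\otimes\sqrt{\Omega_{0,\sigma}}$, which by Proposition~\ref{peterweylpartial} lies in $\mathcal{H}_{\mathcal{P}_{0,\sigma}}$ and, by the Corollary $\mathcal{H}_{\mathcal{P}_{0,\sigma}}=\overline{\bigoplus_\lambda V^\lambda_{0,\sigma}}$ together with the Proposition expanding $\mathcal{P}_{0,\sigma}$-polarized functions (which shows the $\pi^\lambda_{kj}(xe^{\sigma Fy})$ are linearly independent on $T^*G$, since they restrict to the Peter--Weyl functions on $G\times\{0\}$), is precisely the corresponding spanning vector of $V^\lambda_{0,\sigma}$; hence $e^{-i\sigma f_{pq}}$ restricts to a linear bijection of the dense subspace $\bigoplus_\lambda V^\lambda_{0,0}$ onto $\bigoplus_\lambda V^\lambda_{0,\sigma}$, carrying each block $V^{\lambda,\lambda_j}_{0,0}$ onto $V^{\lambda,\lambda_j}_{0,\sigma}$. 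Second, by~(\ref{quantumf}) and the fact that $Q(f)$ respects~(\ref{finer}), $Q(f)$ acts on each block $V^{\lambda,\lambda_j}_{0,0}$ as the scalar $f(-\lambda_j)$, so $e^{i\sigma Q(f)}$ acts there as the non-zero scalar $e^{i\sigma f(-\lambda_j)}$ and is a linear automorphism of $\bigoplus_{\lambda,j}V^{\lambda,\lambda_j}_{0,0}$ with inverse $e^{-i\sigma Q(f)}$. Third, the closure of $\bigoplus_\lambda V^\lambda_{0,\sigma}$ is all of $\mathcal{H}_{\mathcal{P}_{0,\sigma}}$.

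Combining the first two ingredients, $U_{0,\sigma}=e^{-i\sigma f_{pq}}\circ e^{i\sigma Q(f)}$ maps $\bigoplus_{\lambda,j}V^{\lambda,\lambda_j}_{0,0}$ bijectively onto $\bigoplus_{\lambda,j}V^{\lambda,\lambda_j}_{0,\sigma}$, preserving the decomposition~(\ref{finer}); since both are dense in the respective Hilbert spaces, $U_{0,\sigma}$ extends to a linear isomorphism $\mathcal{H}_{0,0}\to\mathcal{H}_{0,\sigma}$. The only analytic point is that $U_{0,\sigma}$ and its inverse are bounded, and this follows exactly as in the case $\tau>0$ (the theorem for $U_{\tau,\sigma}$ proved just above, and \cite{Ha2,KMN1}): the contracting scalar $\vert e^{i\sigma f(-\lambda_j)}\vert=e^{-\sigma_2 f(-\lambda_j)}$ produced by $e^{i\sigma Q(f)}$ exactly compensates the growth of $\vert\vert e^{-i\sigma f_{pq}}(\pi^\lambda_{kj}(x)\otimes\sqrt{dx})\vert\vert_{0,\sigma}$ relative to $\vert\vert\pi^\lambda_{kj}(x)\otimes\sqrt{dx}\vert\vert_{0,0}$, which one estimates using the Gaussian decay of $e^{-\kappa_{0,\sigma}}$ coming from~(\ref{kahlerpot}) with $\tau=0$ and the explicit half-form weight of Proposition~\ref{partialhalfform}.

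For the intertwining property, recall that $G\times T$ acts on $\mathcal{H}_{0,\sigma}$ by $(x',t)\cdot\Phi(xe^{\sigma Fy})e^{-i\lambda_{0,\sigma}}\otimes\sqrt{\Omega_{0,\sigma}}=\Phi(x'xe^{\sigma Fy}t)e^{-i\lambda_{0,\sigma}}\otimes\sqrt{\Omega_{0,\sigma}}$, which is well defined because $\lambda_{0,\sigma}=-\sigma f$ and $\sqrt{\Omega_{0,\sigma}}$ are $G\times T$-invariant and which preserves each $V^{\lambda,\lambda_j}_{0,\sigma}$, while on $\mathcal{H}_{0,0}$ one has the corresponding Peter--Weyl $G\times T$-action, preserving each $V^{\lambda,\lambda_j}_{0,0}$. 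Now $e^{i\sigma Q(f)}$ is $G\times T$-equivariant because it is a scalar on each invariant block $V^{\lambda,\lambda_j}_{0,0}$, and $e^{-i\sigma f_{pq}}$ is $G\times T$-equivariant because $f$ is $G\times T$-invariant and $\mathcal{P}_{0,\sigma}$ is a $G\times T$-invariant polarization, so that $f_{pq}=(i\nabla_{X_f}+f)\otimes 1+1\otimes i\mathcal{L}_{X_f}$ commutes with the prequantum $G\times T$-action; equivalently this is visible directly from the $\tau=0$ case of Lemma~\ref{ajax} together with $\pi^\lambda(x'gt)=\pi^\lambda(x')\pi^\lambda(g)\pi^\lambda(t)$ and the diagonality of $\pi^\lambda(e^a)$ on $\ttt$. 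Hence $U_{0,\sigma}$, being a composite of $G\times T$-equivariant maps, intertwines the two actions, which completes the plan.

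I expect the main difficulty to be conceptual rather than computational: one must work with the fine decomposition~(\ref{finer}), indexed by the weights $\lambda_j$ — on which $Q(f)$ is a scalar — rather than with the coarser isotypic decomposition $\mathcal{H}_{0,0}=\overline{\bigoplus_\lambda V^\lambda_{0,0}}$, on which it is not. This is precisely the mechanism by which turning on $\sigma\neq 0$ reduces the $G\times G$-symmetry of $U_{\tau,0}$ to the $G\times T$-symmetry of $U_{0,\sigma}$, and getting the bookkeeping of indices and group actions right is the only real care needed; the remaining norm estimates are a verbatim repetition of the $\tau>0$ case.
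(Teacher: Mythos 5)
Your proposal is correct and follows essentially the same route as the paper, which obtains this theorem directly from Lemma \ref{ajax} at $\tau=0$, Proposition \ref{peterweylpartial}, the scalar action of $Q(f)$ on the blocks of the fine decomposition (\ref{finer}), and the $G\times T$-action defined on the polarized sections; you have simply made explicit the block-by-block bookkeeping that the paper leaves implicit. The one analytic point you flag — boundedness of $U_{0,\sigma}$ and its inverse for the extension by closure — is in the paper settled only by the exact norm computation in the unitarity theorem of the following subsection, so your appeal to "the same estimate as for $\tau>0$" is consistent with, if slightly ahead of, the paper's own logical order.
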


\subsection{Unitarity of the partial coherent state transform $U_{0,\sigma}$}

In this Section, we will establish that, in fact, $U_{0,\sigma}$ is a unitary isomorphism of Hilbert spaces.

\begin{theorem}Let $\sigma \in {\C}^+$. The partial coherent state transform 
$$
U_{0,\sigma}\colon  {\mathcal H}_{{\mathcal P}_{0,0}} \to {\mathcal H}_{{\mathcal P}_{0,\sigma}}
$$
is a unitary isomorphism.
\end{theorem}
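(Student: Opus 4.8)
The plan is to exploit the $G\times T$-equivariant decomposition \eqref{finer} together with the fact that the whole construction fibers over the flag manifold $G/T$, reducing unitarity of $U_{0,\sigma}$ to the classical (half-form corrected) Segal--Bargmann/Hall unitarity on the torus $T$. First I would observe that, by the two preceding theorems, $U_{0,\sigma}$ is a linear isomorphism intertwining the $G\times T$-actions, so it suffices to check that $U_{0,\sigma}$ is an isometry on each isotypical block $V_{\tau,\sigma}^{\lambda,\lambda_j}$ appearing in \eqref{finer} with $\tau=0$; equivalently, on each $V_{0,0}^{\lambda,\lambda_j}$ and its image. Fixing $\lambda\in\hat G$ and a weight $\lambda_j$, the block is spanned by the matrix elements $\pi^\lambda_{kj}(x)\otimes\sqrt{dx}$, $k=1,\dots,\dim\lambda$, with fixed column $j$; on such an element, by Lemma \ref{ajax} (with $\tau=0$) and the definitions of $Q(f)$ in \eqref{quantumf} and of $U_{0,\sigma}$, one computes
\begin{equation*}
U_{0,\sigma}\bigl(\pi^\lambda_{kj}(x)\otimes\sqrt{dx}\bigr) = e^{i\sigma f(-\lambda_j)}\,e^{-i\lambda_{0,\sigma}}\,\pi^\lambda_{kj}\bigl(xe^{\sigma Fy}\bigr)\otimes\sqrt{\Omega_{0,\sigma}}.
\end{equation*}

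Next I would compute both norms explicitly. On the left, $\Vert \pi^\lambda_{kj}(x)\otimes\sqrt{dx}\Vert^2_{0,0} = \int_G |\pi^\lambda_{kj}(x)|^2\,dx = (\dim\lambda)^{-1}$ by Schur orthogonality. On the right, using Proposition \ref{partialhalfform} for $\vert\sqrt{\Omega_{0,\sigma}}\vert^2 = \pi^{-r/2}\sigma_2^{r/2}(\det F)^{1/2}\alpha_\sigma$ and the fact that $\lambda_{0,\sigma}(x,y)=-\sigma f(y)$ so that $\kappa_{0,\sigma}=2\sigma_2 f$, one gets an integral over $G\times\{i\frak t\}$ of the form
\begin{equation*}
\pi^{-r/2}\sigma_2^{r/2}(\det F)^{1/2}\int_{G}\int_{\frak t} \bigl|\pi^\lambda_{kj}\bigl(xe^{\sigma Fa}\bigr)\bigr|^2\,e^{2\sigma_2\,\mathrm{Im}(f(-\lambda_j))\text{-type terms}}\,e^{-2\sigma_2 f(a)}\,da\,dx.
\end{equation*}
The key algebraic input is that $\pi^\lambda$ is written in the weight basis, so $\pi^\lambda_{kj}(xe^{\sigma Fa}) = \pi^\lambda_{kj}(x)\,e^{i\langle\lambda_j,\sigma Fa\rangle}$ for $a\in\frak t$; hence $|\pi^\lambda_{kj}(xe^{\sigma Fa})|^2 = |\pi^\lambda_{kj}(x)|^2\,e^{-2\sigma_2\langle\lambda_j,Fa\rangle}$. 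The $x$-integral again yields $(\dim\lambda)^{-1}$ by Schur, and the $a$-integral becomes a Gaussian integral over $\frak t\cong\R^r$:
\begin{equation*}
\int_{\frak t} e^{-2\sigma_2\langle\lambda_j,Fa\rangle}\,e^{-\sigma_2\langle a,Fa\rangle}\,da,
\end{equation*}
after combining the factor $e^{2\sigma_2 f(-\lambda_j)}$ coming from $e^{i\sigma Q(f)}$ and $e^{i\sigma f(-\lambda_j)}$ inside the wavefunction with the cross term, completing the square shows the Gaussian centered at $a=-\lambda_j$ with quadratic form $\sigma_2 F$, and evaluates to $(\pi/\sigma_2)^{r/2}(\det F)^{-1/2}$ times a factor $e^{\sigma_2\langle\lambda_j,F\lambda_j\rangle}$ that is exactly cancelled by the phase/exponential prefactors. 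The net result is that the right-hand norm equals $\pi^{-r/2}\sigma_2^{r/2}(\det F)^{1/2}\cdot(\dim\lambda)^{-1}\cdot(\pi/\sigma_2)^{r/2}(\det F)^{-1/2} = (\dim\lambda)^{-1}$, matching the left side.

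Finally I would check that off-diagonal inner products vanish on both sides compatibly: for $(\lambda,j)\neq(\lambda',j')$ orthogonality on the Schrödinger side is Schur orthogonality (distinct $\lambda$) or orthogonality of distinct weight-characters of $T$ integrated against the Haar measure on the fiber (same $\lambda$, distinct $j$, using the $T$-equivariance), and the same two mechanisms — Schur orthogonality in the $x$-variable and, for a fixed $\lambda$ with distinct weights $\lambda_j\neq\lambda_{j'}$, the $a$-integral of $e^{-\sigma_2\langle a,Fa\rangle}$ against a \emph{purely oscillatory-in-the-real-direction but here real-shifted} Gaussian weight which still separates the two blocks because the centered Gaussians have different means — give orthogonality on the $\mathcal{P}_{0,\sigma}$ side; one must be slightly careful that within a fixed block the map is an isometry up to the \emph{same} constant for every column index $k$, which is immediate since $k$ only labels the $x$-dependence and the computation above is uniform in $k$. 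The main obstacle I anticipate is bookkeeping the precise constants and phases: tracking the factor $e^{i\sigma Q(f)}$ through $U_{0,\sigma}$, the factor $e^{-i\lambda_{0,\sigma}}=e^{i\sigma f(y)}$ in the wavefunction, the half-form normalization from Proposition \ref{partialhalfform}, and the Gaussian normalization, and verifying they conspire to give exactly $1$ (so that $U_{0,\sigma}$ is unitary and not merely bounded with bounded inverse). Once the $r=1$ / single-weight computation is done cleanly, the general case follows by the product structure of $F$-Gaussians on $\frak t\cong\R^r$ and the Peter--Weyl/weight-space decomposition.
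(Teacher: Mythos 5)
Your proposal is correct and follows essentially the same route as the paper's proof: reduce to matrix elements via Lemma \ref{ajax}, insert the half-form normalization of Proposition \ref{partialhalfform}, apply Schur/Weyl orthogonality on $G$, and evaluate a Gaussian integral over $\frak t$ in the weight basis, verifying that the $e^{i\sigma f(-\lambda_j)}$ factors coming from $Q(f)$ exactly cancel the completed square. One small repair: for fixed $\lambda$ and distinct columns $j\neq j'$, orthogonality is already supplied by Schur orthogonality in the $x$-variable (which yields $\delta_{jj'}$), not by the shifted Gaussians on $\frak t$ having different means --- Gaussians with different means are not orthogonal in $L^2$, so that particular mechanism should be dropped.
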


\begin{proof}
>From (\ref{structure2}), (\ref{finer}), Proposition \ref{partialhalfform}, Lemma \ref{ajax} and (\ref{quantumf}) and we want to compute
\begin{IEEEeqnarray*}{rCls}
    \IEEEeqnarraymulticol{3}{l}{\bigl\langle \pi^\lambda_{jk}\bigl(xe^{\sigma Fy}\bigr) e^{-i\lambda_{0,\sigma}} \otimes \sqrt{\Omega_{0,\sigma}}, \pi^\lambda_{lm}\bigl(xe^{\sigma Fy}\bigr) e^{-i\lambda_{0,\sigma}} \otimes \sqrt{\Omega_{0,\sigma}}\bigr\rangle_{{\mathcal H}_{{\mathcal P}_{0,\sigma}}}}\\
    \quad & = & \int_{G\times \frak t} \overline{\pi^\lambda_{jk}\bigl(xe^{\sigma Fy}\bigr)} \pi^{\lambda'}_{lm}\bigl(xe^{\sigma Fy}\bigr) e^{i\overline \lambda_{0,\sigma}}e^{-i\lambda_{0,\sigma}} \overline{e^{i\sigma f(-\lambda_k)}} e^{i\sigma f(-\lambda_m)} \vert\vert\sqrt{\Omega_{0,\sigma}}\vert\vert^2\\
    \quad & = & \pi^{-\frac{r}{2}} \sigma_2^{\frac{r}{2}} (\det F)^\frac12 \overline{e^{i\sigma f(-\lambda_k)}} e^{i\sigma f(-\lambda_m)} \times \\
    \quad & & {}\times \int_{G\times \frak t}  \left(\sum_{s=1}^{\dim \lambda} \pi^\lambda_{sj}\bigl(e^{-\sigma_1 Fy}x^{-1}\bigr) \pi^\lambda_{ks}\bigl(e^{i\sigma_2 Fy}\bigr)\right)  \pi^{\lambda'}_{lm}\bigl(xe^{\sigma Fy}\bigr)  e^{-\sigma_2\langle y, Fy\rangle}dx dy.  \\
\end{IEEEeqnarray*}
By  the Weyl orthogonality relations, integration on $G$ gives then
\begin{IEEEeqnarray*}{rCl+x*}
\IEEEeqnarraymulticol{3}{l}{\bigl\langle \pi^\lambda_{jk}\bigl(xe^{\sigma Fy}\bigr) e^{-i\lambda_{0,\sigma}} \otimes \sqrt{\Omega_{0,\sigma}}, \pi^\lambda_{lm}\bigl(xe^{\sigma Fy}\bigr) e^{-i\lambda_{0,\sigma}} \otimes \sqrt{\Omega_{0,\sigma}}\bigr\rangle_{{\mathcal H}_{{\mathcal P}_{0,\sigma}}}} \\
\quad&=&\delta_{\lambda \lambda'} \delta_{jl}(\dim \lambda)^{-1} \pi^{-\frac{r}{2}} \sigma_2^{\frac{r}{2}} (\det F)^\frac12 \overline{e^{i\sigma f(-\lambda_k)}} e^{i\sigma f(-\lambda_m)} \int_{\frak t} \pi^\lambda_{kl}\bigl(e^{2\sigma_2 Fy}\bigr) e^{-\sigma_2\langle y, Fy\rangle} dy\\
\quad&=& \delta_{\lambda \lambda'} \delta_{jl}\delta_{km}(\dim \lambda)^{-1} \pi^{-\frac{r}{2}} \sigma_2^{\frac{r}{2}} (\det F)^\frac12  e^{-2\sigma_2\sigma f(-\lambda_k)} \int_{\frak t} e^{2\sigma_2 \langle \lambda_k, Fy\rangle} e^{-\sigma_2\langle y, Fy\rangle} dy\\
\quad&=& \delta_{\lambda \lambda'} \delta_{jl}\delta_{km}(\dim \lambda)^{-1} \\
\quad&=& \bigl\langle \pi^\lambda_{jk}(x) \sqrt{dx}, \pi^\lambda_{lm}(x) \otimes \sqrt{dx}\bigr\rangle_{{\mathcal H}_{{\mathcal P}_{0,0}}}.& \hfill\qedhere
\end{IEEEeqnarray*}
\end{proof}

\begin{remark}While for general $h$ the transform $U_{\tau,\sigma}$ will not be unitary (see, for example, \cite{KMN2}), for quadratic $h$ this seems to be a reasonable expectation. However, the evaluation of $\vert\vert U_{\tau,\sigma} \pi^\lambda_{jk}(x)\otimes \sqrt{dx}\vert\vert^2$ does not seem to be that straightforward.  
\end{remark}

\section{Appendix}
Here we collect two useful results.

\begin{notation}
	Let $S,T \colon \g \longrightarrow \g$ be differentiable maps (if $S,T \in \g$, then regard them as the constant map that to every $y \in \g$ assigns $S$ or $T$, respectively). Denote by $X^{S,T} \in \mathfrak{X}(T^*G)$ the left-invariant vector field defined by $S$ and $T$:
	\begin{IEEEeqnarray*}{RL}
		X^{S,T} \colon G \times \g & \longrightarrow \g \oplus \g \\
		(x,y) & \longmapsto (S(y),T(y)). 
	\end{IEEEeqnarray*}
\end{notation}

\begin{lemma}\label{lema1apen}
	Let $X^{A,B}, X^{C,D} \in \mathfrak{X}(T^*G)$ be left-invariant vector fields, where $A$, $B$, $C$, $D \colon \g \longrightarrow \g$ are differentiable maps. 
	The Lie bracket $\left[X^{A,B}, X^{C,D}\right]$ is given by
	\begin{IEEEeqnarray}{rCl}
		\label{eq:liebracket}
		\left[X^{A,B}, X^{C,D}\right]_{T^*G} (x,y) & = & \Bigl([A(y), C(y)]_\g , 0 \Bigr) \\
		& & {}+ \Bigl( (dC)|_y (B(y)) - (dA)|_y (D(y)), 0 \Bigr) \nonumber \\
		& & {}+ \Bigl(0, (dD)|_y (B(y)) - (dB)|_y (D(y))\Bigr). \nonumber
	\end{IEEEeqnarray}
\end{lemma}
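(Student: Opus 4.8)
The plan is to expand both vector fields in the global frame $\{X_j,\ \partial/\partial y^j\}_{j=1,\dots,n}$ of $T(T^*G)$ and to compute the bracket term by term, using bilinearity of the Lie bracket together with the Leibniz identity $[fU,gW]=fg\,[U,W]+f\,(Ug)\,W-g\,(Wf)\,U$, valid for $f,g\in C^\infty(T^*G)$ and $U,W\in\mathfrak{X}(T^*G)$.

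Concretely, write $X^{A,B}=\sum_i A^i(y)\,X_i+\sum_i B^i(y)\,\frac{\partial}{\partial y^i}$ and $X^{C,D}=\sum_j C^j(y)\,X_j+\sum_j D^j(y)\,\frac{\partial}{\partial y^j}$, where $A^i(y)=\langle A(y),T_i\rangle$ and similarly for $B,C,D$. I would then record the three structural facts that make all cross-terms collapse: (i) $[X_i,X_j]=\sum_k c_{ij}^k\,X_k$, by definition of the structure constants; (ii) $[X_i,\partial/\partial y^j]=0$ and $[\partial/\partial y^i,\partial/\partial y^j]=0$, because $X_i$ is the pull-back of a left-invariant vector field on $G$ while $\partial/\partial y^j$ is the pull-back of a coordinate vector field on $\g$, so the two factors in $G\times\g$ contribute separately-integrable, mutually commuting distributions; and (iii) $X_i$ annihilates any function of $y$ alone and $\partial/\partial y^j$ annihilates any pull-back by $\pi\colon T^*G\to G$.

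Feeding this into the Leibniz rule on the four pieces $[\mathcal A,\mathcal C]$, $[\mathcal A,\mathcal D]$, $[\mathcal B,\mathcal C]$, $[\mathcal B,\mathcal D]$, where $\mathcal A=\sum_iA^iX_i$, $\mathcal B=\sum_iB^i\,\partial/\partial y^i$ and similarly $\mathcal C,\mathcal D$, I expect: the $\mathcal A$--$\mathcal C$ term to give $\sum_{i,j,k}A^i(y)C^j(y)c_{ij}^k\,X_k$, i.e. the pair $([A(y),C(y)]_\g,0)$; the $\mathcal A$--$\mathcal D$ and $\mathcal B$--$\mathcal C$ terms, after (iii) kills the derivatives $X_iC^j$ and $X_jB^i$, to give $-\sum_{i,j}D^j(y)\frac{\partial A^i}{\partial y^j}X_i+\sum_{i,j}B^i(y)\frac{\partial C^j}{\partial y^i}X_j$, which is the pair $((dC)|_y(B(y))-(dA)|_y(D(y)),0)$ once one recognizes $\sum_jD^j(y)\frac{\partial A^i}{\partial y^j}$ as the $i$-th component of $(dA)|_y(D(y))$; and the $\mathcal B$--$\mathcal D$ term, since the $\partial/\partial y^i$ commute, to give $\sum_{i,j}\bigl(B^i(y)\frac{\partial D^j}{\partial y^i}-D^i(y)\frac{\partial B^j}{\partial y^i}\bigr)\frac{\partial}{\partial y^j}$, i.e. the pair $(0,(dD)|_y(B(y))-(dB)|_y(D(y)))$. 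Summing the four contributions yields (\ref{eq:liebracket}).

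There is no real obstacle here; the only point deserving a moment's care is keeping track of which directional derivatives vanish---in particular that the ``structure-constant'' contribution comes entirely from $[X_i,X_j]$, and that every bracket mixing the $X_i$ with the $\partial/\partial y^j$ vanishes because $\spn\{X_i\}$ and $\spn\{\partial/\partial y^j\}$ are integrable distributions whose local flows commute on the product $G\times\g$. Everything else is the routine passage between the coordinate expression of a Lie bracket and its invariant form.
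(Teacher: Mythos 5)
Your proposal is correct and follows essentially the same route as the paper: expand both fields in the frame $\{X_j,\partial/\partial y^j\}$, use the commutation relations $[X_j,X_k]=\sum_l c_{jk}^l X_l$, $[X_j,\partial/\partial y^k]=[\partial/\partial y^j,\partial/\partial y^k]=0$, and evaluate the four cross-terms via the Leibniz rule, with the vanishing of $X_j$ applied to functions of $y$ alone doing the simplification. The signs and the identification of the surviving terms with $[A,C]_\g$, $(dC)(B)-(dA)(D)$ and $(dD)(B)-(dB)(D)$ all match the paper's computation.
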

\begin{proof}
	Let $\{T_1,...,T_n\}$ be a basis of $\g$ with associated coordinates $\{y^1,...,y^n\}$, and $\{X_1,...,X_n\}$ be the left invariant vector fields in $G$ such that $X_j|_e = T_j$. Then,
	\begin{equation*}
		X_1,...,X_n,\frac{\partial}{\partial y^1},...,\frac{\partial}{\partial y^n} \in \mathfrak{X}(G\times \g)
	\end{equation*}
	form a basis $TG$. Notice that as vector fields in $G \times \g$,
	\begin{IEEEeqnarray*}{rCl}
		\left[X_j,X_k\right] & = & \sum_{l=1}^{n} c_{jk}^l X_l,\\
		\left[X_j,\frac{\partial}{\partial y^k}\right] & = & 0,\\
		\left[\frac{\partial}{\partial y^j},\frac{\partial}{\partial y^k}\right] & = & 0,
	\end{IEEEeqnarray*}
	where $c_{jk}^l$ are the structure constants associated to the basis $\{T_1,...,T_n\}$. As vector fields on $G \times \g$,
	\begin{IEEEeqnarray*}{rCl}
		X^{A,B}(x,y) & = & \sum_{j=1}^{n} \left( a^j(y) X_j + b^j(y) \frac{\partial}{\partial y^j} \right) \\
		X^{C,D}(x,y) & = & \sum_{j=1}^{n} \left( c^j(y) X_j + d^j(y) \frac{\partial}{\partial y^j} \right)
	\end{IEEEeqnarray*}
	Using this new basis we can compute $\left[X^{A,B}, X^{C,D}\right]$:
	\begin{IEEEeqnarray*}{rCl}
		[X^{A,B},X^{C,D}]|_{(x,y)} & = & \sum_{j,k=1}^{n} \left[ a^j(y) X_j + b^j(y) \frac{\partial}{\partial y^j}, c^k(y) X_k + d^k(y) \frac{\partial}{\partial y^k} \right] \\
		& = & \underbrace{\sum_{j,k=1}^{n} \left[ a^j(y) X_j, c^k(y) X_k\right]}_{\Sigma_1} + \underbrace{\sum_{j,k=1}^{n} \left[ a^j(y) X_j, d^k(y) \frac{\partial}{\partial y^k} \right]}_{\Sigma_2} \\
		& & {}+ \underbrace{\sum_{j,k=1}^{n} \left[ b^j(y) \frac{\partial}{\partial y^j}, c^k(y) X_k \right]}_{\Sigma_3} + \underbrace{\sum_{j,k=1}^{n} \left[ b^j(y) \frac{\partial}{\partial y^j}, d^k(y) \frac{\partial}{\partial y^k} \right]}_{\Sigma_4} \nonumber
	\end{IEEEeqnarray*}
	\begin{IEEEeqnarray*}{rCl}
		\Sigma_1 & = & \sum_{j,k,l=1}^n a^j(y) c^k(y) C_{jk}^l X_l = \sum_{j=1}^{n }\left( [A(y),C(y)] \right)^j X_j,\\
		\Sigma_2 & = & -\sum_{j,k=1}^{n} d^k(y) \frac{\partial a^j}{\partial y^k} X_j = \sum_{j=1}^{n } \left(-dA|_y(D(y))\right)^j X_j,\\
		\Sigma_3 & = & \sum_{j,k=1}^{n} b^k(y) \frac{\partial c^j}{\partial y^k} X_j = \sum_{j=1}^{n } \left(dC|_y(B(y))\right)^j X_j,\\
		\Sigma_4 & = & \sum_{j,k=1}^{n} \left( b^j(y) \frac{\partial d^k}{\partial y^j} \frac{\partial}{\partial y^k} - d^j(y) \frac{\partial b^k}{\partial y^j} \frac{\partial}{\partial y^k} \right) = \sum_{j=1}^{n } (D|_y(B(y)) - B|_y(D(y)))^j \frac{\partial}{\partial y^j},
	\end{IEEEeqnarray*}
	which proves the result.
\end{proof}

	\begin{lemma}
\label{lema2apen}Let $X_h, X_f$ be the Hamiltonian vector fields considered in Sections \ref{prel} \and \ref{newpol}.
	We have, for $t\in {\R}$,
	\begin{itemize}
		\item if $S,T \colon \g \longrightarrow \g$,
		\begin{equation}
		\label{eq:expLXhX}
			e^{t {\mathcal L}_{X_h}} \cdot X^{S,T} |_{(x,y)} = \left( e^{t \ad_{u_h(y)}} (S(y)) + \frac{1 - e^{t \ad_{u_h(y)}}}{\ad_{u_h(y)}} H_h(y) (T(y)), T(y) \right);
		\end{equation}
		\item if $S,T \in \g$,
		\begin{equation}
		\label{eq:expLXfX}
			e^{s {\mathcal L}_{X_f}} \cdot X^{S,T} |_{(x,y)} = \left(e^{s \, \ad_{Fy}} (S) -s F (T), \bigg[ e^{s \, \ad_{Fy}} -s  \ad_y \circ F \bigg] (T) \right);
		\end{equation}
		\item if $S,T \in \g$,
		\begin{multline}
		\label{eq:expLXhLXfX}
			e^{t {\mathcal L}_{X_h}+s {\mathcal L}_{X_f}} \cdot X^{S,T}|_{(x,y)} = \\ = \bigg( \bigg[ e^{t \ad_{u_h(y)}} e^{s \, \ad_{Fy}} \bigg](S) +  \bigg[\frac{\id - 
			e^{t \ad_{u_h(y)}}}{\ad_{u_h(y)}} H_h(y) e^{s \, \ad_{Fy}} -s F \bigg](T), \bigg[ e^{s \, \ad_{Fy}} -s  \ad_y \circ F \bigg] (T) \bigg).
		\end{multline}
	\end{itemize}
\end{lemma}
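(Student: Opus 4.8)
The plan is to exploit the identification, for a (left-invariant) vector field $Y$, of $e^{t{\mathcal L}_{X}}Y$ with the push-forward $(\phi^{-t}_{X})_*Y$ — this is exactly the relation recalled in the discussion preceding Proposition \ref{polsts} — and then to evaluate the push-forward pointwise using the derivative formulas already established in Lemma \ref{derivatives} and Lemma \ref{composederivatives}. Concretely, for $Y$ left-invariant and $X\in\{X_h,X_f\}$ one has
$$
\bigl(e^{t{\mathcal L}_{X}}Y\bigr)\big|_{(x,y)} \;=\; D\bigl(\phi^{-t}_{X}\bigr)_{\phi^{t}_{X}(x,y)}\Bigl(Y\big|_{\phi^{t}_{X}(x,y)}\Bigr),
$$
and since $X_h$ and $X_f$ are left-invariant their flows are left-equivariant, so the push-forward of $X^{S,T}$ is again of the form $X^{S',T'}$, which is the shape asserted by the Lemma. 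The whole proof is then a matter of reading off the right $(S',T')$.

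\textbf{The two single-flow formulas.} For (\ref{eq:expLXhX}) I would use that, by (\ref{eq:flowofXh}), $\phi^{t}_{X_h}(x,y)=(xe^{tu_h(y)},y)$ has the \emph{same} $\g$-component $y$; hence $X^{S,T}\big|_{\phi^{t}_{X_h}(x,y)}=(S(y),T(y))$ in the splitting (\ref{tangentspace}), and applying $D(\phi^{-t}_{X_h})_{\phi^{t}_{X_h}(x,y)}$ — which is the matrix obtained from Lemma \ref{composederivatives} by setting $s=0$ — produces precisely the right-hand side of (\ref{eq:expLXhX}); note this works for arbitrary differentiable $S,T\colon\g\to\g$. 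For (\ref{eq:expLXfX}) the reasoning is identical, the one difference being that $\phi^{s}_{X_f}$ does move the $\g$-component, so one must take $S,T\in\g$ \emph{constant}, in which case $X^{S,T}\big|_{\phi^{s}_{X_f}(x,y)}=(S,T)$ regardless of the base point; applying the $t=0$ specialization of Lemma \ref{composederivatives} gives (\ref{eq:expLXfX}).

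\textbf{The combined formula.} For (\ref{eq:expLXhLXfX}) I would first invoke $[X_h,X_f]=0$ (established in Section \ref{sectionquadratic}), which gives $[{\mathcal L}_{X_h},{\mathcal L}_{X_f}]={\mathcal L}_{[X_h,X_f]}=0$, hence $e^{t{\mathcal L}_{X_h}+s{\mathcal L}_{X_f}}=e^{t{\mathcal L}_{X_h}}\circ e^{s{\mathcal L}_{X_f}}=(\phi^{-t}_{X_h}\circ\phi^{-s}_{X_f})_*$, the flows commuting. Evaluating $X^{S,T}$ (with $S,T\in\g$ constant) at $(\phi^{t}_{X_h}\circ\phi^{s}_{X_f})(x,y)$ again yields $(S,T)$, and applying $D(\phi^{-t}_{X_h}\circ\phi^{-s}_{X_f})$, which is exactly the matrix in Lemma \ref{composederivatives}, reproduces the right-hand side of (\ref{eq:expLXhLXfX}). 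Equivalently, one may compose (\ref{eq:expLXfX}) with the general-$(S,T)$ version (\ref{eq:expLXhX}) and simplify using $F\circ e^{s\,\ad_{Fy}}=F$, $F\circ\ad_y\circ F=0$ and $\ad_{u_h(y)}=H_h(y)\ad_y$ from (\ref{commutes}) — the cancellation being the same one used in the proof of Proposition \ref{polsts}.

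\textbf{Main obstacle.} The only delicate point is to justify that $e^{t{\mathcal L}_{X_h}}Y$, defined a priori as the formal series $\sum_{k\ge0}\frac{t^k}{k!}{\mathcal L}_{X_h}^{k}Y$, actually coincides with the push-forward. Here this is harmless: for fixed $y$ the push-forward $(\phi^{-t}_{X_h})_*X^{S,T}$ depends on $t$ only through the entire functions $e^{t\,\ad_{u_h(y)}}$ and $\frac{\id-e^{t\,\ad_{u_h(y)}}}{\ad_{u_h(y)}}$ (and analogously in $s$), so it is real-analytic with Taylor coefficients at $t=0$ equal to the iterated Lie derivatives, whence the series converges for all $t$ to the push-forward. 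Should one prefer a self-contained argument avoiding this identity altogether, the alternative I would present as a remark is to compute $\mathcal L_{X_h}$ and $\mathcal L_{X_f}$ directly as linear operators on left-invariant vector fields via Lemma \ref{lema1apen} (obtaining, for instance, $\mathcal L_{X_h}X^{S,T}=X^{\ad_{u_h(y)}S-H_h(y)T,\,0}$), determine their powers by induction using $[\ad_{u_h(y)},H_h(y)]=0$ together with the properties of $F$, and sum the resulting exponential series. I would take the push-forward computation as the main line and relegate the operator-power verification to a remark.
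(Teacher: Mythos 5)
Your argument is correct, but it is not the route the paper takes. The paper's proof characterizes $e^{t{\mathcal L}_{X_h}}\cdot X^{S,T}$ as the unique solution of the ODE $\frac{d}{dt}X(t)={\mathcal L}_{X_h}X(t)$, $X(0)=X^{S,T}$, and then verifies that the right-hand sides of (\ref{eq:expLXhX})--(\ref{eq:expLXfX}) solve it by computing their $t$- (resp.\ $s$-) derivative on one hand and their Lie derivative via the bracket formula of Lemma \ref{lema1apen} on the other; the combined formula (\ref{eq:expLXhLXfX}) is then obtained, as in your proposal, from ${\mathcal L}_{X_h}{\mathcal L}_{X_f}={\mathcal L}_{X_f}{\mathcal L}_{X_h}$ by composing the two single-flow formulas. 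You instead identify $e^{t{\mathcal L}_{X}}$ with the push-forward $(\phi^{-t}_{X})_*$ and read the answer off the already-computed flow derivatives of Lemmas \ref{derivatives} and \ref{composederivatives}, using that $\phi^t_{X_h}$ fixes the $\g$-component (so general $S,T\colon\g\to\g$ are allowed in (\ref{eq:expLXhX})) while $\phi^s_{X_f}$ does not (so $S,T$ must be constant in (\ref{eq:expLXfX})) --- a distinction the statement makes and your argument explains, which is a small bonus. The trade-off is that your route must justify the identification of the formal exponential series with the push-forward, which you do correctly via real-analyticity in $t$ and matching of Taylor coefficients; the paper's route avoids this by taking the ODE as the defining property of the exponential and only needs uniqueness of solutions, at the cost of an extra explicit Lie-derivative computation with Lemma \ref{lema1apen}. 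Both are sound; your version also makes more transparent why this Lemma is exactly the infinitesimal counterpart of the push-forward description of ${\mathcal P}_{t,s}$ in Proposition \ref{polsts}.
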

\begin{proof}
	Note that $e^{t {\mathcal L}_{X_h}} \cdot X^{S,T}$ is the unique one-parameter family of vector fields $X(t) \in \mathfrak{X}(M) \otimes \mathbb{C}$ such that
	\begin{equation*}
        	\left\{
	\begin{IEEEeqnarraybox}[\IEEEeqnarraystrutmode][c]{rCl}
	{\mathcal L}_{X_h} X(t) & = & \frac{d}{dt} X(t)\\
	X(0) & = & X^{S,T}
	\end{IEEEeqnarraybox}
	\right.
    \end{equation*}

	It is easily seen that
	\begin{multline*}
		\frac{d}{dt} \left( e^{t \ad_{u_h(y)}} (S(y)) + \frac{1 - e^{t \ad_{u_h(y)}}}{\ad_{u_h(y)}} H_h(y) (T(y)), T(y) \right)\\ = \bigg( \ad_{u_h(y)} e^{t \ad_{u_h(y)}} S(y) - e^{t \ad_{u_h(y)}} H_h(y) T(y), 0 \bigg).
	\end{multline*}
	And using equation (\ref{eq:liebracket}), it is possible to prove that
	\begin{multline*}
		{\mathcal L}_{X_h} \left( e^{t \ad_{u_h(y)}} (S(y)) + \frac{1 - e^{t \ad_{u_h(y)}}}{\ad_{u_h(y)}} H_h(y) (T(y)), T(y) \right) \\= \bigg( \ad_{u_h(y)} e^{t 
		\ad_{u_h(y)}} S(y) - e^{t \ad_{u_h(y)}} H_h(y) T(y), 0 \bigg).
	\end{multline*}
	Therefore, equation (\ref{eq:expLXhX}) is proved. Using the same reasoning, it is easily seen that
	\begin{multline*}
		\frac{d}{ds} \bigg( e^{s \, \ad_{Fy}} S - sFT, \left[ e^{s \, \ad_{Fy}} -s \, \ad_y \circ F \right] T \bigg) \\=\bigg( \ad_{Fy} e^{s \, \ad_{Fy}} S - FT, \left[ \ad_{Fy}e^{s \, \ad_{Fy}} - \ad_y \circ F \right] T \bigg).
	\end{multline*}
	Again, using equation (\ref{eq:liebracket}), and the properties of $F$, it is possible to prove that
	\begin{multline*}
	{\mathcal L}_{X_f} \bigg( e^{s \, \ad_{Fy}} S - sFT, \left[ e^{s \, \ad_{Fy}} - s \, \ad_y \circ F \right] T \bigg) \\=\bigg( \ad_{Fy} e^{s \, \ad_{Fy}} S - FT, \left[ \ad_{Fy}e^{s \, \ad_{Fy}} - \ad_y \circ F \right] T \bigg),
	\end{multline*}
	which proves (\ref{eq:expLXfX}). To prove (\ref{eq:expLXhLXfX}), it suffices to use the fact that ${\mathcal L}_{X_h} {\mathcal L}_{X_f} = {\mathcal L}_{X_f} 
	{\mathcal L}_{X_h}$, and equations (\ref{eq:liebracket}) and (\ref{eq:expLXfX}):
	\begin{IEEEeqnarray*}{rCl+x*}
        \IEEEeqnarraymulticol{3}{l}{e^{t {\mathcal L}_{X_h}+s {\mathcal L}_{X_f}} \cdot X^{S,T}|_{(x,y)}}\\
        \quad & = & e^{t {\mathcal L}_{X_h}} \left( e^{s {\mathcal L}_{X_f}} \cdot X^{S,T} \right) |_{(x,y)} \\
		\quad& = & e^{t {\mathcal L}_{X_h}} \left(e^{s \, \ad_{Fy}} (S) -s F (T), \bigg[ e^{s \, \ad_{Fy}} -s  \ad_y \circ F \bigg] (T) \right) \\
		\quad& = & \bigg( e^{t \ad_{u_h(y)}} (e^{s \, \ad_{Fy}} (S) -s F (T)) + \frac{1 - e^{t \ad_{u_h(y)}}}{\ad_{u(y)}} H_h(y) \bigg(\bigg[ e^{s \, \ad_{Fy}} -s \, \ad_y \circ F \bigg] (T)\bigg), \nonumber \\
        \IEEEeqnarraymulticol{3}{r}{\bigg[ e^{s \, \ad_{Fy}} -s \, \ad_y \circ F \bigg] (T) \bigg)\hphantom{.}}\\
		\quad& = & \bigg(  \bigg[  e^{t \ad_{u(y)}} e^{s \, \ad_{Fy}} \bigg](S) + \bigg[\frac{1 - e^{t \ad_{u(y)}}}{\ad_{u(y)}} H|_y e^{s \, \ad_{Fy}} -s  F \bigg](T), \bigg[ e^{s \, \ad_{Fy}} -s \, \ad_y \circ F \bigg] (T) \bigg).&\hfill\qedhere
	\end{IEEEeqnarray*}
\end{proof}

{\bf Acknowledgements:} The authors would like to thank T. Baier, J. Hilgert and O. Kaya for discussions.
The authors were partially supported by the projects:\\
UID/MAT/04459/2013, PTDC/MAT-GEO/3319/2014, PTDC/MAT-OUT/28784/2017 (FCT/Portugal) and by the COST Action 
MP1405 QSPACE. MP is also supported by the Deutsche Forschungsgemeinschaft fellowship DFG ZH 605/1-1.

\end{document}